\newcommand{\R}{\mathbb{R}}
\newcommand{\T}{\mathbb{T}}
\newtheorem{theorem}{Theorem}[section]
\newtheorem{lemma}{Lemma}
\newtheorem{proposition}{Proposition}
\newtheorem{remark}{Remark}
\newtheorem*{main-theorem}{Main Theorem}
\newtheorem*{remark*}{Remark}
\newtheorem*{lemma*}{Lemma A.1}
\numberwithin{equation}{section}
\begin{document}

	\title[modified fKdV and fNLS equations]{Global dynamics of small solutions to the modified fractional Korteweg-de Vries and nonlinear Schr\"{o}dinger equations}

	\author{Jean-Claude Saut}
	
	\author{Yuexun Wang}
	
	\address{ Universit\' e Paris-Saclay, CNRS, Laboratoire de Math\'  ematiques d'Orsay, 91405 Orsay, France.}
	\email{jean-claude.saut@universite-paris-saclay.fr}
	
	\address{	
		School of Mathematics and Statistics, 
		Lanzhou University, 370000 Lanzhou, China.}
	\address{Universit\' e Paris-Saclay, CNRS, Laboratoire de Math\'  ematiques d'Orsay, 91405 Orsay, France.}
	
	\email{yuexun.wang@universite-paris-saclay.fr}
	
	\thanks{}

	\subjclass[2010]{76B15, 76B03, 	35S30, 35A20}
	\keywords{modified fKdV, modified fNLS, global existence, modified scattering}
	
	\begin{abstract}
		This paper concerns the modified fractional Korteweg-de Vries (modified fKdV) and nonlinear Schr\"{o}dinger (modified fNLS) equations, with the dispersions \(|D|^{\alpha} \partial_x\) and \(|D|^{\alpha+1}\), respectively. We prove the global existence of small solutions for both the Cauchy problems to the modified fKdV and fNLS equations, with a modified scattering which has a logarithmic phase correction.  Our results cover the full range \(-1<\alpha<1,\ \alpha\neq 0\) for both the modified fKdV and fNLS equations. 
		
	\end{abstract}
	\maketitle

\section{Introduction}

We consider the modified fractional Korteweg-de Vries (modified fKdV) equation:
\begin{align}\label{eq:main-1}
	\partial_t u-|D|^{\alpha} \partial_x u\pm u^2\partial_xu=0,
\end{align}
where the unknown \(u\) maps \(\R_x\times\R_t\) to \(\R\), and \(|D|^{\alpha}\) is the usual Fourier multiplier operator with the symbol \(|\xi|^\alpha\),  and  the fractional nonlinear Schr\"odinger (fNLS) equation with cubic nonlinearity (modified fNLS): 
\begin{align}\label{eq:main-2}
\mathrm{i}\partial_t u-|D|^{\alpha+1} u\pm |u|^2u=0,
\end{align}
in which  the unknown \(u\) maps \(\R_x\times\R_t\) to \(\mathbb{C}\).

We are interested in \eqref{eq:main-1} and \eqref{eq:main-2} for \(\alpha\) in the range \(-1<\alpha<1,\ \alpha\neq 0\) and aim to study the global existence and modified scattering for small solutions of their Cauchy problems, respectively.

Those equations have rather limited physical interest but are very useful toy models to study the competition between the dispersive and nonlinear effects. For some specific values of $\alpha$ the dispersive term is reminiscent of the dispersion relation of physical problems {\it eg} $\alpha=-1/2$ in \eqref{eq:main-2} mimics the dispersion relation $\omega(k)=|k|^{1/2}$ of the water waves system in infinite depth, while $\alpha= -1/2$ (resp. $\alpha =1/2$) in \eqref {eq:main-1} mimics the dispersion of the so-called Whitham equation, (resp. Whitham with strong surface tension), see \cite{KLPS}, in the high frequencies limit. We have excluded for \eqref{eq:main-2} the case $\alpha=0$ that corresponds to the so-called Half-Wave equation which is not dispersive \footnote{We will nevertheless comment briefly below on the Half-Wave equation}. Fractional linear Schr\"{o}dinger operators  have been introduced in \cite{Laskin} but equations such as \eqref{eq:main-2} were not considered there.

Note however that \eqref{eq:main-2} is a particular case of the following equation
\begin{equation}\label{majda}
\mathrm{i}\psi_t=|D|^\sigma\psi\pm |D|^{-\beta/4}\left(\left||D|^{-\beta/4}\psi\right|^2|D|^{-\beta/4}\psi\right),
\end{equation}
where $\sigma>0$ and $\beta\in\R,$ introduced in \cite{MMT} (see also \cite{CMMT}) as a model for assessing the validity of weak turbulence theory for random waves. The parameter $\sigma$ controls the dispersion relation $\omega(k)=|k|^\sigma$ and $\beta$ the nonlinearity, in particular large values of $\beta$ makes the nonlinearity weaker because of a smoothing effect in \(x\). One recovers \eqref{eq:main-2} by choosing $\sigma=\alpha+1$ and $\beta=0$.

Equation \eqref{majda} is hamiltonian with Hamiltonian 
\begin{equation*}\label{HamMaj}
	H(\psi)=\frac{1}{2}\int_\R\left(\left||D|^{\sigma/2}\psi\right|^2\pm\frac{1}{2}\left||D|^{\beta/4}\psi\right|^4\right)\, dx.
\end{equation*}
As noticed in \cite{ZGPD} \eqref{majda} can be usefully written in Fourier space as
\begin{equation}\label{majzak}
	\mathrm{i}\frac{\hat \psi_k}{\partial t}=\omega(k)\hat \psi_k\pm \int T_{123k}\hat\psi_1\hat\psi_2\hat\psi_3^*\delta(k_1+k_2-k_3-k)\, dk_1dk_2dk_3,
\end{equation} 
where $\hat \psi_k=\hat\psi(k,t)$ is the $k-th$ Fourier coefficient of $\psi$  and $(^*)$ denotes complex conjugation.  In this form \eqref{majzak} looks like the so-called one-dimensional Zakharov equation with
dispersion relation
$$\omega(k)=|k|^\sigma,\; \sigma>0$$
and interaction coefficient
$$T_{123k}=T(k_1,k_2,k_3,k)=|k_1k_2k_3k|^{\beta/4}.$$ 
We refer to \cite{ZGPD} for a theoretical and numerical study of \eqref{majzak}.

\begin{remark}
	1. It is proven in \cite{KLS} that \eqref{eq:main-2} with $0<\alpha<1$ can be derived as a continuous limit of discrete nonlinear Schr\"{o}dinger equations with long-range lattice interactions.
	
	2. \eqref{eq:main-2} is reminiscent for $\alpha=-1/4$ of a one-dimensional {\it Full-dispersion} Davey-Stewartson system, see \cite{OS}.
\end{remark}

We now recall some useful properties of equations \eqref{eq:main-1}, \eqref{eq:main-2} together with known and conjectured results on the Cauchy problem. 

For both equations, there is a defocusing case (- sign in \eqref{eq:main-1} and \eqref{eq:main-2}), and a focusing case (+ sign in \eqref{eq:main-1} and \eqref{eq:main-2}) which leads to very different dynamics at least for large solutions.

In addition to the $L^2$ norm (mass), equations \eqref{eq:main-1} and \eqref{eq:main-2} conserve formally the Hamiltonian (energy), respectively
\begin{equation*}
H_\alpha(u)=\frac{1}{2}\int_\R\left(\big||D|^{\alpha/2}u\big|^{2}\mp \frac{1}{6}u^4\right)dx,
\end{equation*}
and 
\begin{equation*}
K_\alpha(u)=\frac{1}{2}\int_\R\left(\big||D|^{(\alpha+1)/2}u\big|^{2}\mp \frac{1}{2}|u|^4\right)dx.
\end{equation*}

By the Sobolev embedding $H^{1/4}(\R)\subset L^4(\R)$ this implies that $\alpha=1/2$ is the energy critical exponent for \eqref{eq:main-1} and $\alpha=-1/2$ is the energy critical exponent for \eqref{eq:main-2}. 

On the other hand,  \eqref{eq:main-1} is invariant under the scaling transformation 
$$u_\lambda(x,t)=\lambda^{\alpha/2} u(\lambda x, \lambda^{\alpha +1}t),$$ 
which implies that $\alpha=1$ is the $L^2$ critical exponent.  

Similarly, \eqref{eq:main-2} is invariant under the transformation
$$u_\lambda(x,t)= \lambda^{(\alpha+1)/2}u(\lambda x, \lambda^{\alpha+1} t)$$ implying that $\alpha =0$ is the $L^2$ critical exponent in this case.

In the defocusing  case and when $\alpha>0$    one has  a formal conservation of the energy space  $H^{\alpha/2}(\R)$ for \eqref{eq:main-1} and when $\alpha>-1$ of the energy space $H^{(\alpha+1)/2}(\R)$ in the case of equation \eqref{eq:main-2}.

For both equations in the defocusing case, a standard compactness method implies the existence of global weak solutions of the Cauchy problem  in the energy space $H^{\alpha/2}(\R)$ when $\alpha>1/3$ (resp. $H^{(\alpha+1)/2}(\R)$ when $\alpha>-2/3$), this condition ensuring that the embedding $H^{\alpha/2}(\R)\subset L^3_{\text{loc}}(\R)$ (resp. $H^{(\alpha+1)/2}(\R)
\subset L^3_{\text{loc}}(\R)$) is compact.

Concerning the local Cauchy problem, it is (without any use of dispersive estimates) trivially locally well-posed in $H^s(\R)$ where $s>3/2$ for  \eqref{eq:main-1}  and $s>1/2$ for \eqref{eq:main-2}. Of course much better results are expected when using the dispersive properties of the equations. This has been done for instance in \cite{LPS, MPV} for the quadratic fKdV equation and it is likely that the methods there could be used to get similar results for \eqref{eq:main-1}. Since we focus in the present paper on the global existence of small solutions we will not consider this issue here but we will comment below on the improved local Cauchy theory for \eqref{eq:main-2}.

Note that in the particular case $\alpha =1$ (modified Benjamin-Ono equation) the Cauchy problem was proven in \cite{KT} to be locally well-posed in $H^s(\R), s\geq 1/2$ leading to the global well-posedness in the defocusing case. The large time behavior of solutions, probably scattering, is an open problem.

Most of global well-posedness results with arbitrary initial data issues for  \eqref{eq:main-1} are still open in the range $0<\alpha \leq 1$ with the exception of the case $\alpha =1$ where as previously noticed, Kenig and Takaoka  \cite{KT} proved the global well-posedness in the defocusing case while Martel and Pilod \cite{MP} established the finite time blow-up in the focusing case. 

\begin{remark}
When $1<\alpha\leq 2,$ which is outside the range we focus on, Guo \cite{Guo} proved that the Cauchy problem for \eqref{eq:main-1} is locally well-posed in $H^s(\R), s>\frac{3-\alpha}{4}$ and globally well-posed in $H^s(\R), s>\frac{\alpha}{2}.$
\end{remark}

An important role in the long time dynamics of equations of the focusing equations \eqref{eq:main-1} and \eqref{eq:main-2} is played by the ground state solutions. In the case of \eqref{eq:main-1} there are solutions of the form $Q_c(x-ct), c>0$ so that they satisfy the equation:
\begin{equation*}
cQ_c+|D|^\alpha Q_c-\frac{Q_c}{3}=0.
\end{equation*}

While no such solution exist in the defocusing case, the existence in the focusing case when $\alpha>1/2$ is standard (see for instance \cite{LPS3}) and the uniqueness of a positive and  even such solution  is established in the same range in \cite{FL}. Moreover, the method used in \cite{LPS3} to establish the orbital stability of $L^2$ subcritical ground states  for  the quadratic fKdV equation  allows to prove the orbital stability of ground state solutions of \eqref{eq:main-1} in the $L^2$ subcritical case $\alpha>1$ that is outside of the range of $\alpha's$ that we consider here.

Next, we note that
$$Q_c(x)=\sqrt cQ(c^{1/\alpha}x),$$
where $Q=Q_1.$ This implies that 
$$\|Q_c\|_{L^2}^2=c^{\frac{\alpha-1}{\alpha}}\|Q\|_{L^2}^2,$$
proving that \eqref{eq:main-1} has ground states of arbitrary $L^2$ norm by taking arbitrary large velocities when $1/2<\alpha<1$ (resp. arbitrary small velocities when $\alpha >1$). This excludes scattering of small solutions in the $L^2$ norm.

Similarly,
$$\|D^{\alpha/2}Q_c\|^2_{L^2}=c^{\frac{3\alpha-2}{2\alpha}}\|D^{\alpha/2}Q\|^2_{L^2},$$
proving for instance that no scattering in the energy space is possible when $1/2<\alpha<1$.

For the values $0<\alpha<1$ the numerical simulations in the recent paper \cite{KSW} lead to the following conjectures:
\begin{itemize}
	\item Defocusing case for \eqref{eq:main-1}:
	
	(i) When $\alpha\geq 1/2$, initial data of arbitrary mass lead to solutions which will be dispersed and are global in time, leading to a dynamics similar to that of the defocusing generalized KdV equation, see\cite{FLPV}.
	
	(ii) When $0<\alpha<1/2$ initial data of sufficiently large mass lead to the formation of a singularity (cusp) in finite time.
	
	(iii) When $0<\alpha<1$ the solutions of sufficiently small mass remain smooth for all \(t\).
	\item Focusing case for \eqref{eq:main-1}:
	
	(i) When $\alpha>1$ the solutions of finite energy are global and decompose into ground states as $t\to+\infty$.
	
	(ii) When $0<\alpha<1$ the solution of sufficiently large mass blows up in finite time, the nature of blow-up being different in the $L^2$ energy subcritical case $1/2<\alpha<1$ and energy supercritical case $0<\alpha<1/2$.
	
	(iii) When $0<\alpha<1$ the solutions of sufficiently small mass remain smooth for all \(t\).

\end{itemize}

We now turn to the Cauchy problem for \eqref{eq:main-2}. The known results are natural extensions on the Cauchy theory of the nonlinear Schr\"{o}dinger equation, see \cite{Ca} but with extra difficulties due the the weak dispersion, in particular the Strichartz estimates involve a loss of derivatives.

The Cauchy problem for general fractional nonlinear Schr\"{o}dinger equations including \eqref{eq:main-2} as a particular case has been studied in many works, see for instance  \cite{HS, CHKL, D1, D2, D3} and the references therein and the numerical simulations  in \cite{KSM}. \footnote{We recall that the Cauchy problem is trivially locally well-posed in $H^s(\R), s>1/2$ for any $\alpha>-1$.}

We recall that  for any $\lambda>0$ \eqref{eq:main-2} is invariant by the scaling 
$$u(x,t)\mapsto u_\lambda(x,t)=\lambda^{(\alpha+1)/2}u(\lambda x,\lambda^{\alpha+1}t),$$
so that
$$\|u_\lambda\|_{\dot{H}^\gamma}=\lambda^{\gamma+\frac{\alpha}{2}}\|u\|_{\dot{H}^\gamma}$$
and $s_c=-\alpha/2$ is the scaling-critical regularity exponent. One can therefore expect local well-posedness in $H^s(\R), s\geq s_c.$

Another important exponent is 
$$s_g=\frac{1-\alpha}{4},$$
which is the critical exponent for the pseudo-Galilean invariance, see \cite{HS}.

One has the following results and conjectures (motivated in particular by the numerical simulations in \cite{KSM})  for the Cauchy problem:
\begin{itemize}
	
	\item Local well-posednes (see \cite{HS}). The Cauchy problem for \eqref{eq:main-2} is locally well-posed in $H^s(\R), s\geq s_g.$  Assume that $\alpha\in (-1/2,1)$. Then the Cauchy problem for \eqref{eq:main-2} is ill-posed in $H^s(\R), s\in (s_c,0).$  The existence proof uses Strichartz estimates with loss. A similar result was obtained in \cite{CHKL} but only in the case  $0<\alpha<1.$ The proof uses Bourgain's type spaces and allows to treat the periodic case.
	
	\item Ill-posedness, see \cite{CP}. The local Cauchy problem is ill-posed in $H^s(\R)$ in the sense that the flow-map $u_0\mapsto u(\cdot,t)$ is not continuous at $0$ in $H^s(\R)$ in the following cases :
	
	(i) $-1<\alpha<0,\; s<0$.
	
	(ii) $0<\alpha<1,\; s<-\alpha/2$.
	
	(iii) $\alpha=1,\;s<-1/2$.

	Ill-posedness results for the Half-Wave equation can also be found in \cite{CP} and in\cite{GTV} for the periodic Half-Wave equation.
	
	\item Global well-posedness is expected in the energy subcritical defocusing case $\alpha>-1/2$ while finite time blow-up is expected in the energy critical and supercritical case $-1<\alpha<-1/2$. Note that when $\alpha>0$ the trivial  local well-posedness in  $H^{(\alpha+1)/2}(\R)$ together with the conservation of energy imply the global well-posedness in  $H^{(\alpha+1)/2}(\R)$ in the defocusing case.

	\item Finite time blow-up is expected in the focusing case in the mass critical or super critical case $-1<\alpha<0$ as supported by the numerical simulations in \cite{KSM}. This appears to be an open problem, but it was solved for the Half-Wave equation ($\alpha=0$) which corresponds to the mass critical case, see \cite{KLR} and the survey \cite{Him}.
	
	\item More precisely, Krieger, Lenzmann and Rapha\"{e}l proved for the focusing Half-Wave equation the existence of a family of traveling waves with subcritical arbitrary small mass. They also proved the existence of a minimal mass $H^{1/2}$ solution that blows up in finite time.

We also mention \cite{GLPR} where one constructs for the Half-Wave equationan asymptotic global-in-time compact two-soliton solution with
arbitrarily small $L^2$-norm which exhibits the following two regimes: (i) a transient
turbulent regime characterized by a dramatic and explicit growth of its
$H^1$-norm on a finite time interval, followed by (ii) a saturation regime in which
the $H^1$-norm remains stationary large forever in time.

 Concerning the defocusing Half-Wave equation, Pocovnicu \cite{Poco} gives an example of initial data leading to a global solution with growing higher Sobolev norms and proves the approximation of solutions corresponding to small initial data supported on  positive frequencies by the Szeg\"{o} equation.  A similar result has been established by G\' erard and Grellier \cite{GG} in the periodic case.
\end{itemize}

 We briefly describe two related results for fNLS equations. In \cite{Lan}, Lan proved a finite blow-up result for the fractional nonlinear Schr\"{o}dinger equation:
 \begin{equation*}
 \mathrm{i}u_t-|D|^\beta u+|u|^{2\beta}u=0,\quad 1\leq \beta<2.
 \end{equation*}
In \cite{BHL} the finite time blow-up for \eqref{eq:main-2} posed on a finite interval with Dirichlet boundary conditions is established when $0<\alpha<1$ in the focusing case.

Ground state solutions are expected to play a major role in the dynamics of the focusing case. There are solutions of the form
$$u(x,t)=e^{\mathrm{i}\omega t}Q_\omega (x),$$
where \(\, \omega>0,\;Q_\omega \in H^{(\alpha+1)/2}(\R)\), 
so that (taking by simplicity + sign)
\begin{equation*}
\omega Q_\omega+ |D|^{\alpha+1} Q_\omega-|Q_\omega|^2Q_\omega=0.
\end{equation*}

As previously 
$$Q_\omega(x)=\sqrt \omega Q(\omega^{1/(\alpha +1)}(x)),$$
where $Q=Q_1$, so that 
$$\|Q_\omega\|^2_{L^2}=\omega^{\frac{\alpha}{\alpha +1}}\|Q\|^2_{L^2},$$
proving that bound states with arbitrary small $L^2$ norm are possible.

\begin{remark}
	As noticed in \cite{HS2}, and contrary to the case of the classical cubic NLS equation, one cannot construct a traveling wave solution simply by boosting  a static one, since the equation does not have any exact Galilean invariance due to the non-locality of the fractional Laplacian. Motivated by a {\it pseudo-Galilean invariance} of \eqref{eq:main-2}, Hong and Sire (\cite{HS2}) proposed to look for a solution of the form:
	$$u_{\omega,k}(t,x)=e^{-\mathrm{i}t(|k|^{\alpha+1}-\omega^{\alpha+1})}e^{\mathrm{i}kx}Q_{\omega,k}(x-2t(\alpha+1)|k|^{\alpha-1}k),$$
	which will lead to  a natural family of moving solitary waves with frequency $\omega$ and speed $k$. The profile $Q_{\omega,k}$ then solves the equation:
	\begin{equation}\label{HS}
	\mathcal P_kQ_{\omega,k}+\omega^{\alpha-1}Q_{\omega,k}-|Q_{\omega,k}|^2Q_{\omega,k}=0,
	\end{equation}
	where 
	$$\mathcal P_k=e^{-\mathrm{i}kx}|D|^{(\alpha+1)/2}e^{\mathrm{i}kx}-|k|^{\alpha+1}+\mathrm{i}(\alpha+1)|k|^{\alpha-1}k\cdot\nabla_x.$$
	
	It is proven in \cite[Theorem 1.1]{HS2},  that for any $k\in \R$ , there exists $Q_{\omega,k}\in H^1(\R)$ solving \eqref{HS} for some $\omega>0$. Moreover $Q_{\omega,k}\in C^\infty(\R)$.
\end{remark}

\begin{remark}
We mention here some facts on the periodic problem for \eqref{eq:main-2}. Thirouin \cite{Thi} proved that the periodic Cauchy problem for the defocusing \eqref{eq:main-2} is globally well posed in $C^\infty(\T)$ when $-1/3<\alpha<1, \alpha\neq 0$. Moreover the Sobolev norms grow at most at a polynomial rate.

Recent results concern the stochastic periodic problem. Namely, for the defocusing fNLS equation \eqref{eq:main-2} posed on the circle $\T$  and with initial data distributed according to the Gibbs measure,  Sun and Tzvetkov \cite{SunTz, SunTz2} constructed strong solutions for $\alpha>\alpha_0=\frac{17-\sqrt{233}}{14}\sim 0.124.$

It is moreover established in \cite{SunTz2} that the Cauchy problem cannot be solved by a Picard iteration on the framework of Bourgain's spaces when $\alpha<1/5$.
\end{remark}

\vspace{0.3cm}
Let us  now describe  the  goal of the present paper, that is the global existence and scattering of small solutions of the Cauchy problem for \eqref{eq:main-1} and   \eqref{eq:main-2}. This completes previous works of the Authors on the modified fKdV equation, \cite{SW} where shock formation was proven when $-1\leq\alpha<0$, \cite{KSW} which focused on issues arising from large initial data and \cite{SW1} where long time existence and   scattering of small solutions was established in the case $-1<\alpha<0.$
In the case of equation \eqref{eq:main-2} our results extend previous known results in the sense that they cover the full range $-1<\alpha<1, \alpha\neq 0$.

To study the Cauchy problem, one has to impose the initial data
\begin{align}\label{eq:initial-1}
u(x,0)=u_0(x).
\end{align}

Now we are in a position to state our first main result which is the global existence and scattering of small solutions on the modified fKdV equation in the focusing case \footnote{A similar result holds in the defocusing case with minor sign changes.}:
\begin{theorem}\label{th:main-1} Let \(\alpha\in(-1,1)\setminus\{0\}\). Define the profile of \(u\)
	\[f(t)=e^{-t|D|^{\alpha} \partial_x}u(t),\]
	and the \(Z\)-norm
	\[\|f\|_Z=\|(|\xi|^{(1-\alpha)/4}+|\xi|^{10})\widehat{f}(\xi)\|_{L^\infty_\xi}.\]
	Assume that \(N_0=100,\ p_0\in (0,-10^{-3}\alpha] \ \text{when} -1<\alpha<0\), and \(\ p_0\in (0,10^{-3}(1-\alpha)] \ \text{when}\ 0<\alpha<1\) are fixed, and \(u_0\in H^{N_0}(\mathbb{R})\) satisfies
	\begin{align}\label{fKdV1}
	\|u_0\|_{H^{N_0}}+\|u_0\|_{H^{1,1}}+\|u_0\|_Z=\varepsilon_0\leq \bar{\varepsilon},
	\end{align}
	for some constant \(\bar{\varepsilon}\) sufficiently small (depending only on \(\alpha\) and \(p_0\)). Then the Cauchy problem \eqref{eq:main-1} and \eqref{eq:initial-1} admits a unique
	global solution \(u\in C(\mathbb{R}:H^{N_0}(\mathbb{R}))\) satisfying the following uniform bounds for \(t\geq 1\)
	\begin{align}\label{fKdV2}
	t^{-p_0}\|u\|_{H^{N_0}}+t^{-p_0}\|f\|_{H^{1,1}}+\|f\|_Z\lesssim \varepsilon_0,
	\end{align}
	and
	\begin{align}\label{fKdV-decay}
	\|u\|_{L^\infty}+\|\partial_xu\|_{L^\infty}\lesssim \varepsilon_0 t^{-1/2}.
	\end{align}
	Moreover,  there exists \(w_\infty\in L^\infty(\mathbb{R})\) such that  for \(t\geq 1\)
	\begin{align}\label{fKdV3}
	t^{p_0}\left\|\exp\left(\frac{\mathrm{i}\xi|\xi|^{1-\alpha}}{|\alpha|(\alpha+1)}\int_1^t|\widehat{f}(\xi,s)|^2\,\frac{d s}{s}\right)(|\xi|^{(1-\alpha)/4}+|\xi|^{10})\widehat{f}(\xi)-w_\infty(\xi)\right\|_{L^\infty_\xi}\lesssim \varepsilon_0.
	\end{align}
\end{theorem}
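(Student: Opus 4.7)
The plan is to close a bootstrap argument on the three norms appearing in \eqref{fKdV2}. Setting
\[
\|u\|_{X_T}=\sup_{t\in [1,T]}\Bigl(t^{-p_0}\|u(t)\|_{H^{N_0}}+t^{-p_0}\|f(t)\|_{H^{1,1}}+\|f(t)\|_Z\Bigr),
\]
I would assume a priori that $\|u\|_{X_T}\leq C_1\varepsilon_0$ and deduce $\|u\|_{X_T}\leq (C_1/2)\varepsilon_0$ for $\varepsilon_0$ small enough. The first ingredient, before touching any of the three norms, is a linear dispersive/decay bound of the form $\|u(t)\|_{L^\infty}+\|\partial_xu(t)\|_{L^\infty}\lesssim t^{-1/2}(\|f\|_Z+t^{-\delta}\|f\|_{H^{1,1}}^{1/2}\|f\|_{H^{N_0}}^{1/2})$, obtained by stationary phase applied to $e^{t|D|^\alpha\partial_x}f$ (here the factor $|\xi|^{(1-\alpha)/4}$ in the $Z$-norm is exactly the Jacobian weight coming from the critical point of the oscillatory integral, i.e.\ the square root of $|\Phi''(\xi)|^{-1}$ for $\Phi(\xi)=\xi|\xi|^\alpha$). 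Inserting the bootstrap gives the pointwise decay \eqref{fKdV-decay}.

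The Sobolev estimate for $\|u\|_{H^{N_0}}$ is then standard: applying $\partial_x^{N_0}$ to the equation, integrating against $\partial_x^{N_0}u$, distributing derivatives via a Kato--Ponce / commutator estimate, and using $\|u\|_{L^\infty}^2\lesssim \varepsilon_0^2 t^{-1}$ produces $\tfrac{d}{dt}\|u\|_{H^{N_0}}^2\lesssim \varepsilon_0^2 t^{-1}\|u\|_{H^{N_0}}^2$, which by Grönwall yields the $t^{p_0}$ growth for any $p_0>0$. For the weighted norm $\|f\|_{H^{1,1}}=\|\langle x\rangle\langle\partial_x\rangle f\|_{L^2}$, I would use the commutator $[x,e^{-t|D|^\alpha\partial_x}]$ to trade the weight $x$ on the profile for $t|D|^\alpha$ acting on $u$, then run an energy estimate on $xf$: the resulting nonlinear contribution is again of the form $\|u\|_{L^\infty}^2\times(\text{weighted norm})$, giving the same $t^{p_0}$ growth.

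The main obstacle, and the heart of the proof, is the $Z$-norm estimate and the modified-scattering correction. Writing the equation satisfied by the profile in Fourier variables gives
\[
\partial_t\widehat{f}(\xi,t)=c\,\I\xi\iint e^{\I t\Phi(\xi,\eta,\sigma)}\widehat{f}(\eta)\widehat{f}(\sigma)\widehat{f}(\xi-\eta-\sigma)\,d\eta\,d\sigma,
\]
with phase $\Phi(\xi,\eta,\sigma)=\xi|\xi|^\alpha-\eta|\eta|^\alpha-\sigma|\sigma|^\alpha-(\xi-\eta-\sigma)|\xi-\eta-\sigma|^\alpha$. The space-time resonant set $\{\Phi=0,\ \nabla_{\eta,\sigma}\Phi=0\}$ reduces for an odd dispersion relation to the diagonals $(\eta,\sigma)\in\{(\xi,-\xi),(-\xi,\xi),(\xi,\xi)\cdot(\text{perm.})\}$, on which $|\partial^2\Phi|\sim|\xi|^{\alpha-1}$; this is precisely where the factor $\xi|\xi|^{1-\alpha}/(|\alpha|(\alpha+1))$ in \eqref{fKdV3} comes from via a two-dimensional stationary phase with Hessian determinant $\sim|\xi|^{2(\alpha-1)}$. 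I would localize dyadically in the phase $|\Phi|\sim 2^k/t$ and in the output frequency $|\xi|\sim 2^j$, extract the resonant contribution by stationary phase, and estimate the remainder either by integration by parts in time (for the non-time-resonant part) or in $(\eta,\sigma)$ (for the non-space-resonant part), controlling the multipliers through the weighted and high-Sobolev bounds. The delicate points are (i) the small-$\xi$ region where the stationary-phase Hessian degenerates (handled by the $|\xi|^{(1-\alpha)/4}$ weight in $Z$) and (ii) the high-frequency region where the $|\xi|^{10}$ weight and the $H^{N_0}$ bound absorb the derivative loss of the nonlinearity; the admissible range of $p_0$ in the statement is dictated by the losses in these two endpoints.

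Finally, the resonant identity has the schematic form $\partial_t\widehat{f}(\xi,t)=\mp\I\,\xi|\xi|^{1-\alpha}/(|\alpha|(\alpha+1))\cdot t^{-1}|\widehat{f}(\xi,t)|^2\widehat{f}(\xi,t)+R(\xi,t)$ with $\|(|\xi|^{(1-\alpha)/4}+|\xi|^{10})R\|_{L^\infty_\xi}\lesssim\varepsilon_0^3 t^{-1-\delta}$. Since $|\widehat{f}(\xi,t)|$ is conserved at each $\xi$ up to error $O(t^{-\delta})$ by taking real part of this ODE, one sets
\[
w(\xi,t)=\exp\!\Bigl(\tfrac{\I\xi|\xi|^{1-\alpha}}{|\alpha|(\alpha+1)}\int_1^t|\widehat{f}(\xi,s)|^2\,\tfrac{ds}{s}\Bigr)(|\xi|^{(1-\alpha)/4}+|\xi|^{10})\widehat{f}(\xi,t),
\]
and the chain rule gives $\partial_tw=O(t^{-1-\delta})$ in $L^\infty_\xi$, which is integrable in time and yields convergence to some $w_\infty\in L^\infty(\R)$ at the rate $t^{-p_0}$, proving \eqref{fKdV3}. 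Uniqueness and persistence of regularity follow from the local well-posedness in $H^{N_0}$ combined with the global-in-time a priori bounds.
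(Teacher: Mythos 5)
Your proposal mirrors the paper's proof strategy at every stage: a bootstrap on the combined $H^{N_0}$, $H^{1,1}$ and $Z$ norms, a stationary-phase dispersive estimate giving the $t^{-1/2}$ decay, energy estimates for the Sobolev and weighted norms via Kato--Ponce and commutator/vector-field identities, and a space-time resonance analysis of the profile ODE in which the resonant cubic contribution is isolated by two-dimensional stationary phase and absorbed by the logarithmic integrating factor. Two small corrections to your narrative are worth noting. First, $|\xi|^{(1-\alpha)/4}$ is not ``exactly the square root of $|\Phi^{\prime\prime}(\xi)|^{-1}$''; that quantity is $|\xi|^{(1-\alpha)/2}$, and the paper deliberately takes the smaller exponent so that the $Z$-norm constraint at low frequencies is strictly stronger than what the dispersive estimate alone requires, which supplies the margin needed in the low-frequency reduction step of the $Z$-norm argument. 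Second, because $u$ is real for fKdV the Fourier trilinear form carries no complex conjugate, so the resonance analysis is genuinely more involved than for cubic NLS: the paper decomposes each dyadic piece into the eight triples of sign restrictions on $\widehat{f}$, observes that the all-negative triple vanishes identically, and then routes each remaining triple either through integration by parts in time (when, for that sign pattern and frequency configuration, the phase has a good lower bound on the support) or through integration by parts in the frequency variables (when the gradient of the phase does). Your ``integrate by parts in time vs.\ frequency'' dichotomy is the right idea, but which branch applies depends delicately on the sign pattern and on the dyadic location of the smallest input frequency; carrying out that bookkeeping, rather than the stationary-phase extraction of the resonant term, is where most of the paper's effort actually goes.
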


Our second main result is the global existence and scattering of small solutions on the modified fNLS equation in the focusing case, which is stated precisely as follows \footnote{A similar result holds in the defocusing case with minor sign changes.}:
\begin{theorem}\label{th:main-2} Let \(\alpha\in(-1,1)\setminus\{0\}\). Define the profile of \(u\)
	\[f(t)=e^{\mathrm{i}t|D|^{\alpha+1}}u(t),\]
	and the \(Z\)-norm 
	\[\|f\|_Z=\|(|\xi|^{(1-\alpha)/4}+|\xi|^{10})\widehat{f}(\xi)\|_{L^\infty_\xi}.\] 
	Assume that \(N_0=100,\ p_0\in (0,-10^{-3}\alpha]\ \text{when} -1<\alpha<0\), and \(\ p_0\in (0,10^{-3}(1-\alpha)] \ \text{when}\ 0<\alpha<1\) are fixed, and \(u_0\in H^{N_0}(\mathbb{R})\) satisfies
	\begin{align}\label{fNLS1}
	\|u_0\|_{H^{N_0}}+\|u_0\|_{H^{1,1}}+\|u_0\|_Z=\varepsilon_0\leq \bar{\varepsilon},
	\end{align}
	for some constant \(\bar{\varepsilon}\) sufficiently small (depending only on \(\alpha\) and \(p_0\)). Then the Cauchy problem \eqref{eq:main-2} and \eqref{eq:initial-1} admits a unique
	global solution \(u\in C(\mathbb{R}:H^{N_0}(\mathbb{R}))\) satisfying the following uniform bounds for \(t\geq 1\)
	\begin{align}\label{fNLS2}
	t^{-p_0}\|u\|_{H^{N_0}}+t^{-p_0}\|f\|_{H^{1,1}}+\|f\|_Z\lesssim \varepsilon_0,
	\end{align}
	and
	\begin{align}\label{fNLS-decay}
	\|u\|_{L^\infty}+\|\partial_xu\|_{L^\infty}\lesssim \varepsilon_0 t^{-1/2}.
	\end{align}
	Moreover,  there exists \(w_\infty\in L^\infty(\mathbb{R})\) such that  for \(t\geq 1\)
	\begin{align}\label{fNLS3}
	t^{p_0}\left\|\exp\left(\frac{-\mathrm{i}|\xi|^{1-\alpha}}{|\alpha|(\alpha+1)}\int_1^t|\widehat{f}(\xi,s)|^2\,\frac{d s}{s}\right)(|\xi|^{(1-\alpha)/4}+|\xi|^{10})\widehat{f}(\xi)-w_\infty(\xi)\right\|_{L^\infty_\xi}\lesssim \varepsilon_0.
	\end{align}
\end{theorem}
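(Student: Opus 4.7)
The plan is to prove Theorem \ref{th:main-2} by a bootstrap argument on the combined norm
$$X(t) := t^{-p_0}\|u(t)\|_{H^{N_0}} + t^{-p_0}\|f(t)\|_{H^{1,1}} + \|f(t)\|_Z,$$
starting from the standard local well-posedness in $H^{N_0}(\R)$: I would assume $X(t)\leq C_1\varepsilon_0$ on some interval $[1,T]$ and then recover $X(t)\leq \tfrac{C_1}{2}\varepsilon_0$. The first ingredient is a linear dispersive estimate. Since the phase $\xi x - t|\xi|^{\alpha+1}$ has a single nondegenerate stationary point in $\xi$ whose second derivative has size $|\alpha|(\alpha+1)|\xi|^{\alpha-1}$, a Littlewood--Paley decomposition together with one-dimensional stationary phase yields
$$\|e^{-\mathrm{i}t|D|^{\alpha+1}}g\|_{L^\infty_x} \lesssim t^{-1/2}\bigl(\||\xi|^{(1-\alpha)/4}\widehat g\|_{L^\infty_\xi} + t^{-\delta}\|g\|_{H^{1,1}}\bigr)$$
modulo a high-frequency contribution controlled by the $|\xi|^{10}$ weight in $Z$. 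Applied to $u=e^{-\mathrm{i}t|D|^{\alpha+1}}f$ this gives the decay \eqref{fNLS-decay} from the bootstrap, and explains why $Z$ carries the weight $|\xi|^{(1-\alpha)/4}$: it is exactly the inverse of the Hessian factor $|\xi|^{(\alpha-1)/2}$ that degenerates as $\xi\to 0$.

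The $H^{N_0}$ and weighted $H^{1,1}$ bounds are then relatively soft. Applying $\langle D\rangle^{N_0}$ to \eqref{eq:main-2} and pairing with $\langle D\rangle^{N_0}u$, a Kato--Ponce commutator estimate on $|u|^2u$ combined with \eqref{fNLS-decay} gives
$$\frac{d}{dt}\|u(t)\|_{H^{N_0}}^2\lesssim \|u\|_{L^\infty}^2\|u\|_{H^{N_0}}^2\lesssim \varepsilon_0^2\,t^{-1}\|u(t)\|_{H^{N_0}}^2,$$
so $\|u(t)\|_{H^{N_0}}\lesssim \varepsilon_0\,t^{C\varepsilon_0^2}$, which is absorbed by $t^{p_0}$ when $\varepsilon_0$ is small. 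For the weighted bound on $f$ I would use the commutator identity $e^{\mathrm{i}t|D|^{\alpha+1}}\,x\,e^{-\mathrm{i}t|D|^{\alpha+1}}=x+t(\alpha+1)|D|^{\alpha-1}D$, so that the weight passed through the Duhamel formula generates an extra factor $t|D|^{\alpha-1}D$ on $|u|^2u$; this trilinear term is controlled by placing one factor in $L^\infty$ (using \eqref{fNLS-decay}) and two in $L^2$, again giving at most $\varepsilon_0\,t^{p_0}$ growth.

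The heart of the argument is the $Z$-norm bound. In Fourier the profile satisfies
$$\partial_t\widehat f(\xi)= \pm\mathrm{i}\,c\!\iint e^{\mathrm{i}t\Phi(\xi,\xi_1,\xi_3)}\widehat f(\xi_1)\overline{\widehat f(\xi_1+\xi_3-\xi)}\widehat f(\xi_3)\,d\xi_1d\xi_3,$$
with $\Phi=|\xi|^{\alpha+1}-|\xi_1|^{\alpha+1}+|\xi_1+\xi_3-\xi|^{\alpha+1}-|\xi_3|^{\alpha+1}$. Since $\eta\mapsto|\eta|^{\alpha-1}\eta$ is injective on $\R$ for every $\alpha\in(-1,1)\setminus\{0\}$, the space--time resonant set collapses to the diagonal $\xi_1=\xi_2=\xi_3=\xi$. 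At that point the Hessian in $(\xi_1,\xi_3)$ has vanishing diagonal and off-diagonal entries $\alpha(\alpha+1)|\xi|^{\alpha-1}$, hence determinant $-\alpha^2(\alpha+1)^2|\xi|^{2(\alpha-1)}$ and signature zero. Two-dimensional stationary phase in $(\xi_1,\xi_3)$ then isolates
$$\partial_t\widehat f(\xi)=\frac{\mathrm{i}}{|\alpha|(\alpha+1)}\,\frac{|\xi|^{1-\alpha}}{t}\,|\widehat f(\xi)|^2\widehat f(\xi)+\mathcal R(t,\xi),$$
with the remainder to be estimated in $(|\xi|^{(1-\alpha)/4}+|\xi|^{10})L^\infty_\xi$ by $\varepsilon_0^3\,t^{-1-\delta}$. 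Controlling $\mathcal R$ is the main obstacle: it calls for a dyadic decomposition in $|\xi_1-\xi|,|\xi_3-\xi|$ and in $|\Phi|$, integration by parts in $(\xi_1,\xi_3)$ off the diagonal, and nonstationary integration by parts in $t$ when $|\Phi|$ is large, with the losses in each regime absorbed against the $t^{p_0}$ growth of the $H^{1,1}$ and $H^{N_0}$ norms. The smallness constraints on $p_0$ arise precisely because the Hessian degenerates as $\alpha\to 0$ and the dispersion weakens as $|\alpha|\to 1$, so the room to absorb those losses shrinks at those endpoints.

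Since the resonant coefficient $A(\xi,t)=\mathrm{i}|\xi|^{1-\alpha}|\widehat f(\xi)|^2/(|\alpha|(\alpha+1)t)$ is purely imaginary, $\partial_t|\widehat f(\xi)|^2=2\operatorname{Re}(\overline{\widehat f}\,\mathcal R)$ is of order $t^{-1-\delta}$ uniformly in $\xi$, so the logarithmic integral $\int_1^t|\widehat f(s,\xi)|^2\,ds/s$ converges up to a harmless $O(1)$ correction. Setting
$$g(t,\xi):=\exp\!\left(\frac{-\mathrm{i}|\xi|^{1-\alpha}}{|\alpha|(\alpha+1)}\int_1^t|\widehat f(s,\xi)|^2\,\frac{ds}{s}\right)\widehat f(t,\xi),$$
one computes $\partial_t g(t,\xi)=e^{\mathrm{i}(\cdots)}\,\mathcal R(t,\xi)$, which is absolutely integrable in $t$ once multiplied by the weight $|\xi|^{(1-\alpha)/4}+|\xi|^{10}$. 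A Cauchy criterion in this weighted $L^\infty_\xi$ topology produces the scattering profile $w_\infty$ together with the convergence rate \eqref{fNLS3}, and simultaneously closes the $Z$-norm part of the bootstrap.
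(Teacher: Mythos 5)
Your proposal is correct and follows essentially the same route as the paper's proof: bootstrap on the combined $X$-norm; linear dispersive estimate via stationary phase with the Hessian factor $|\alpha|(\alpha+1)|\xi|^{\alpha-1}$ explaining the $|\xi|^{(1-\alpha)/4}$ weight; $H^{N_0}$ and weighted $H^{1,1}$ bounds via Kato--Ponce/Duhamel and the commuting vector field $\mathcal{J}=e^{-\mathrm{i}t|D|^{\alpha+1}}xe^{\mathrm{i}t|D|^{\alpha+1}}$; and the $Z$-bound by isolating the hyperbolic ($\xi_1=\xi_3=\xi$) resonance with a 2D stationary-phase/Gaussian computation, killing the off-resonant piece by dyadic decomposition in the output frequencies, integration by parts in $(\xi_1,\xi_3)$ off the diagonal, and integration by parts in $s$ when $|\Phi|$ is bounded below, after which the integrating factor and a Cauchy criterion in weighted $L^\infty_\xi$ produce $w_\infty$. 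One small slip in phrasing: the integral $\int_1^t|\widehat f(s,\xi)|^2\,ds/s$ does \emph{not} converge (it diverges logarithmically, which is precisely the modified scattering); what is true, and what your argument actually needs, is that $|\widehat f(\xi,t)|$ (equivalently $|g(\xi,t)|$) converges as $t\to\infty$ because the resonant coefficient is purely imaginary — this is consistent with the rest of your write-up and does not constitute a gap.
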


The  kind of  problems under study here are  usually referred to as  ``long-range" scattering issues. This phenomena was first pointed out by Ozawa \cite{Ozawa} in the context of the cubic one-dimensional Schr\"{o}dinger equation ($\alpha=1$ in \eqref{eq:main-2}) means that  the solutions of the nonlinear equation do not scatter to the free solutions. For \eqref{eq:main-1} (\eqref{eq:main-2}), the long-range phenomenon can be seen from the fact that \(\|u^2u_x(t)\|_{L^2}\) (\(\||u|^2u(t)\|_{L^2}\)) fails to be integrable for all time. In fact \(\int_1^t\|u^2u_x(s)\|_{L^2}\,\ d s<\infty\) (\(\int_1^t\||u|^2u(s)\|_{L^2}\,\ d s<\infty\)) only holds true till \(t\thickapprox e^{c\varepsilon^{-2}}\) for small solutions of size  \(\varepsilon>0\), which means that one can only expect that the solutions of \eqref{eq:main-1} (\eqref{eq:main-2}) behave like the free solutions at most till to \(t\thickapprox e^{c\varepsilon^{-2}}\). In other words, \eqref{eq:main-1} (\eqref{eq:main-2}) still possesses a global solution, however this solution differs from the free solution, more precisely, by a logarithmic correction in scattering (modified scattering). 

Many results of this type were obtained for the modified KdV, modified Benjamin-Ono and cubic nonlinear Schr\"{o}dinger equations, see for instance \cite{MR3519470, MR3462131, HN0, IT1, MR3121725} and the references therein. On the other hand it was proven in \cite{BGLV} that small data scattering fails to hold for the focusing Half-Wave equation due to the existence of solitary waves of small speed.

In the context of the present paper, the modified scattering for equation \eqref{eq:main-2} with $\alpha=-1/2$ was first obtained in \cite{MR3121725} and then extended to other values of $\alpha$, namely  $-1<\alpha\leq 1/2, \alpha\neq 0$ in a series of papers \cite{HN2, HN3, Nau, HN5}.

The present work is based on the space-time resonance argument and its variants developed in different contexts, see for instance \cite{GMS,GNT,HN1,MR3121725} and the references therein, and close to \cite{MR3121725} in methodology, but mainly inspired by \cite{SW1}.
Our method can handle the modified fKdV and fNLS equations together, and can treat the full range of values of $-1<\alpha<1, \alpha\neq 0$ uniformly. Even for the modified fNLS equation, our proof is much simpler than the existing ones \cite{MR3121725, HN2, HN3, Nau, HN5}. 
Let us now explain our main improvements. 
In the previous work \cite{SW1}, in order to handle \eqref{eq:main-1}, our crucial observations were that one can use the structure of the nonlinearity (the presence of a derivative) to eliminate part of resonances in low frequencies, and  so work with a more natural norm \(\|\partial_t\widehat{f_k}(\cdot,t)\|_{L^2}\) to replace \(\|\partial_t\widehat{f_k}(\cdot,t)\|_{L^\infty}\) (the latter one was used in \cite{MR3121725} to handle \eqref{eq:main-2} with $\alpha=-1/2$), which allows to extend the estimates to the whole interval \(-1<\alpha<0\). However, these observations are not enough to treat the case \(0<\alpha<1\) for \eqref{eq:main-1}. To prove Theorem \ref{eq:main-1} for \eqref{eq:main-1} when \(0<\alpha<1\), we need to work with a new Z-norm and use a more delicate localization on the \(L^2\) norm (see \eqref{l18}) and find a more subtle division on the frequencies to handle the resonance by using integration by parts either on space or on time. These new observations allow us to prove the global existence and modified scattering for small solutions of \eqref{th:main-1} not only for \(-1<\alpha<0\) but also for \(0<\alpha<1\).  Via these observations, we find that there is a correspondence between \eqref{eq:main-2} and \eqref{eq:main-1} in the  resonance analysis, in fact the resonance analysis in \eqref{eq:main-2} corresponds to some member of the eight cases in \eqref{eq:main-1}. This leads us to give a parallel proof of Theorem \ref{eq:main-2} for \eqref{eq:main-2} in  both cases \(-1<\alpha<0\) and \(0<\alpha<1\).

We next explain the strategy of the proofs of  Theorem \ref{th:main-1} and \ref{th:main-2}. 

The local well-posedness on the time interval \([0,1]\) for the equation \eqref{eq:main-1} (\eqref{eq:main-2}) with initial data \eqref{eq:initial-1}  is standard provided $\|u_0\|_{H^2}$ is sufficiently small, in particular  under the smallness assumption \eqref{fKdV1} (\eqref{fNLS1}). 

Then the existence and uniqueness of global solutions will be constructed by a bootstrap argument which allows   to extend the local solutions to  global ones. More precisely, we assume that the \(X\)-norm 
\begin{equation}\label{l8}
\begin{aligned}
\|u\|_{X}&=\sup_{t\geq 1}\bigg(t^{-p_0}\|u\|_{H^{N_0}}+t^{-p_0}\|f\|_{H^{1,1}}+\|f\|_Z\bigg)\leq \varepsilon_1,
\end{aligned}
\end{equation}
is a priori small and then aim to show that the a priori assumption \eqref{l8} may be improved to
\begin{align}\label{l9}
\|u\|_{X}\leq C(\varepsilon_0+\varepsilon_1^3),
\end{align}
for some absolute constant \(C>1\) and \(\varepsilon_1=\varepsilon_0^{1/3}\).

The following remarks will be helpful to understand Theorem \ref{th:main-1} and \ref{th:main-2}.

\begin{remark}
	The choice of spaces in Theorems \ref{th:main-1} and \ref{th:main-2} eliminates the possibility of existence of solitary waves with arbitrary small energy norms in the focusing case.
\end{remark}

\begin{remark}
	Equation \eqref{eq:main-2} has a multi-dimensional version, however,  its global existence and scattering issue for small solutions is much easier than the one-dimensional case considered in this paper since the solutions have a much faster decay in time in the multi-dimensional case. 
	The scattering of small solutions to the defocusing $L^2$ supercritical fractional NLS equation has been studied in \cite{GZ} in higher dimensions. 
\end{remark}

\begin{remark}
	We point out that in fact one does not need the decay rate \(t^{-1/2}\) of \(\|\partial_xu\|_{L^\infty}\) in the proof of Theorem \ref{th:main-2}. This decay rate is a by product of \eqref{fNLS2}. 
\end{remark}

\begin{remark}
	The results for \eqref{eq:main-1} and \eqref{eq:main-2} can be  extended to a wider class of equations with short range perturbations of the nonlinearity, namely 
	\begin{align*}
	\mathcal{N}(u)=\mp u^2\partial_xu+\mu u^p\partial_xu
	\end{align*}
for \eqref{eq:main-1}, and
    \begin{align*}
    \mathcal{N}(u)=\mp |u|^2u+\mu|u|^pu
    \end{align*}
for \eqref{eq:main-2}, where \(\mu\in\R\) and \(p>2\).
\end{remark}

\begin{remark}
	Actually our result for \eqref{eq:main-2} can be extended 
	to a class of nonlinearities of the form
	\begin{align}\label{new nonlinearity}
	\mathcal{N}(u)=\mp |u|^2u+c_1u^3+c_2u\bar{u}^2+c_3\bar{u}^3
	\end{align}
	where $c_1, c_2, c_3 \in \mathbb{C}$. In fact, there are four cases in \eqref{eq:main-2} with \eqref{new nonlinearity}:
	\[\{(+,+,-),(+,+,+),(+,-,-),(-,-,-)\}\] 
	which correspond to some four members of the eight cases
	\begin{align*}
	\{(+,+,+),(+,-,-),(-,+,-),(-,-,+),\\
	(+,+,-),
	(+,-,+),(-,+,+),(-,-,-)\}
	\end{align*} 
	in \eqref{eq:main-1} in the resonance analysis, hence the argument for \eqref{eq:main-1} can be used for \eqref{eq:main-2} with \eqref{new nonlinearity}.
	But since those nonlinearities have no great physical relevance we refrain to go into detail for this extension and focus on the classical cubic nonlinearity.
\end{remark}

\begin{remark}
	The result for \eqref{eq:main-2} can also be extended to a modified derivative fNLS equation, i.e.,  
	the nonlinearity takes the form
	\begin{align*}
	\mathcal{N}(u)=\mp \mathrm{i}\partial_x(|u|^2u).
	\end{align*}
	This is because the derivative in the nonlinearity does not play a role in our argument which allows to deal with the modified fKdV and fNLS equations together in this paper. (Recall that the structure of the nonlinearity (the presence of a derivative) plays a crucial role to handle \eqref{eq:main-1} when \(-1<\alpha<0\) in \cite{SW1}.) 
\end{remark}

\begin{remark}
	The  argument in the present  paper can handle \eqref{eq:main-1} and \eqref{eq:main-2} for both of \(-1<\alpha<0\) and \(0<\alpha<1\) simultaneously. 
	This improvement is very important since it is expected to  apply to a much wider class of  dispersive equations whose dispersive relation exhibits different scales at low  and high frequencies, for instance the modified Whitham equation or some {\it full dispersion} equations, see  \cite{La1, OS}.  
	We plan to come back to this issue in a forthcoming work. 
\end{remark}

\begin{remark}
	As it is the case in all previous works concerning the scattering of small solutions, the methods used to prove the above Theorems do not distinguish between the focusing and defocusing case, so that the results are far from being optimal in the defocusing case when $\alpha>1/2$ (resp. $\alpha>-1/2$) since global existence and dispersion for arbitrary large solutions is likely to occur in the energy subcritical regime.
	Note however that proving such a result for the defocusing \eqref{eq:main-1}, \eqref{eq:main-2} or more generally defocusing nonlinear dispersive equations is a delicate  question. We refer for instance to \cite{Dod, FLPV} for the $L^2$ critical and supercritical defocusing generalized KdV equation.   A striking result using complete integrability  concerns the defocusing Davey-Stewartson II system \cite {Su4, Pe, NRT}, the later paper proving global well-posedness and scattering for arbitrary initial conditions in $L^2.$ On the other hand, in the focusing case, those methods exclude the possibility of small solitary waves in the $L^2$ subcritical cases by choosing an appropriate norm, eliminating thus a  nonlinear dynamics involving the emergence of arbitrary small solitary waves that excludes scattering. 
	
	Nevertheless those scattering results are in some sense optimal when finite time blow-up for large initial data is expected, for instance in the focusing $L^2$ critical or supercritical cases and in the defocusing energy supercritical cases. On the other hand, this kind of global existence and scattering result for small solutions in the present paper provides more complete information on the long time asymptotic behavior than the usual global well-posedness result for large solutions..   
\end{remark}

Next we list some notations frequently used throughout the paper.

\noindent{\bf{Notation and conventions.}} 
Let \(L^p(\R)\) (\(p \in [1,\infty]\)) be the standard Lebesgue spaces, in particular, \(L^2(\R)\) is a Hilbert space with inner product 
\[
(g,h)_2:=\int_{\R}gh \, d x.
\]
Similarly, let $H^s(\R)$ (\(s >0\)) be the usual Sobolev spaces with norm 
\[
\|g\|_{H^s(\R)}: = \|(1-\partial_x^2)^{s/2} g \|_{L^2(\R)},
\]
and  let $C([0,T]: H^s(\R))$ be the space of all bounded continuous functions $g\colon [0,T]\rightarrow H^s(\R)$ normed by
\[
\| g \|_{C([0,T]: H^s(\R))}: =  \sup_{t \in [0,T]} \|g(t,\cdot)\|_{H^s(\R)}. 
\]

We denote by \(\mathcal{F}(g)\) or \(\widehat{g}\) the Fourier transform of a Schwartz function \(g\) whose formula is given by 
\begin{equation*}
\begin{aligned}
\mathcal{F}(g)(\xi)=\widehat{g}(\xi):=\frac{1}{\sqrt{2\pi}}\int_{\mathbb{R}}g(x)e^{-\mathrm{i}x\xi}\,d x
\end{aligned}
\end{equation*}
with inverse
\begin{equation*}
\begin{aligned}
\mathcal{F}^{-1}(g)(x)=\frac{1}{\sqrt{2\pi}}\int_{\mathbb{R}}g(\xi)e^{\mathrm{i}x\xi}\, d \xi,
\end{aligned}
\end{equation*}
and by \(m(\partial_x)\) the Fourier multiplier with symbol \(m\) via the relation 
\begin{equation*}
\begin{aligned}
\mathcal{F}\big(m(\partial_x)g\big)(\xi)=m(\mathrm{i}\xi)\widehat{g}(\xi).
\end{aligned}
\end{equation*}

Take \(\varphi\in C_0^\infty(\mathbb{R})\) satisfying \(\varphi(\xi)=1\) for \(|\xi|\leq 1\) and \(\varphi(\xi)=0\) when \(|\xi|>2\), and
let 
\begin{equation*}
\begin{aligned}
\psi(\xi)=\varphi(\xi)-\varphi(2\xi),\quad \psi_j(\xi)=\psi(2^{-j}\xi),\quad \varphi_j(\xi)=\varphi(2^{-j}\xi),
\end{aligned}
\end{equation*}
we then may define the  
Littlewood-Paley projections \(P_j,P_{\leq j},P_{> j}\)  via 
\begin{equation*}
\begin{aligned}
\widehat{P_jg}(\xi)=\psi_j(\xi)\widehat{g}(\xi),\quad \widehat{P_{\leq j}g}(\xi)=\varphi_j(\xi)\widehat{g}(\xi),\quad P_{> j}=1-P_{\leq j},
\end{aligned}
\end{equation*}
and also \(P_{\sim j},P_{\lesssim j},P_{\ll j}\) by 
\begin{equation*}
\begin{aligned}
P_{\sim j}=\sum_{2^k\sim 2^j}P_k, \quad P_{\lesssim j}=\sum_{2^k\leq 2^{j+C}}P_k,\quad P_{\ll j}=\sum_{2^k\ll 2^j}P_k,
\end{aligned}
\end{equation*}
and the obvious notation for \(P_{[a,b]}\).
We will also denote \(g_j=P_jg, g_{\lesssim j}=P_{\lesssim j}g\), and so on, for convenience.

The notation \(C\)  always denotes a nonnegative universal constant which may be different from line to line but is
independent of the parameters involved. Otherwise, we will specify it by  the notation \(C(a,b,\dots)\).
We write \(g\lesssim h\) (\(g\gtrsim h\)) when \(g\leq  Ch\) (\(g\geq  Ch\)), and \(g \sim h\) when \(g \lesssim h \lesssim g\).
We also write \(\sqrt{1+x^2}=\langle x\rangle\), \(\|g\|_{H^{1,1}}=\|\langle x\rangle g\|_{H^1}\), and \(P_{[k-2,k+2]}:=P_k^\prime\) for simplicity.

The paper is organized as follows. Sect.\ref{Proof of mfKdV} is devoted to the proof of Theorem \ref{th:main-1}. We first deduce the decay estimates of the solutions of \eqref{eq:main-1} and then use them to prove the main estimates, namely to control the bounds in \eqref{l9}. Showing the Z-norm bound constitutes the main body of the remaining proof. In Sect.\ref{Proof of mfNLS}, we will show Theorem \ref{th:main-2}, by adopting a similar route as in Sect.\ref{Proof of mfKdV}.

\section{Proof of Theorem \ref{th:main-1}}\label{Proof of mfKdV}

\subsection{Decay estimates}

We have the following decay estimates for the solutions of \eqref{eq:main-1}:
\begin{lemma}\label{decay:fKdV}
	Let \(\alpha\in (-1,1)\setminus\{0\}\) and \(t\geq 1\). Assume that \(u\) is the solution of the equation \eqref{eq:main-1} and satisfies
	\begin{align*}
	t^{-p_0}\|u\|_{H^{N_0}}+(1+t)^{-p_0}\|f\|_{H^{1,1}}+\|f\|_Z\leq 1,
	\end{align*}
	then it holds
	\begin{align}\label{l6}
	\|u\|_{L^\infty}+\|\partial_xu\|_{L^\infty}\lesssim t^{-1/2}.
	\end{align}
	
\end{lemma}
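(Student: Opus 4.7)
The plan is to prove the pointwise decay by a stationary-phase argument on the Fourier representation of $u$, combined with a dyadic frequency decomposition. Using the profile relation, write
\[
u(x,t) \;=\; \frac{1}{\sqrt{2\pi}}\int_\R e^{it\Phi(\xi,\,x/t)}\,\widehat f(\xi,t)\,d\xi,
\qquad
\Phi(\xi,v) \;=\; \xi|\xi|^\alpha + v\xi,
\]
so that $\Phi''(\xi) = \alpha(\alpha+1)|\xi|^{\alpha-1}\mathrm{sgn}(\xi)$ and $|\Phi''|\sim 2^{k(\alpha-1)}$ on the support of $\psi_k$. For $-1<\alpha<1$ this curvature is strong at low frequencies (well suited to van der Corput) and weak at high frequencies (where Sobolev regularity must take over).

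For each dyadic block $P_k u$ I would combine three complementary estimates. Van der Corput's lemma yields the dispersive bound
\[
\|P_k u\|_{L^\infty} \;\lesssim\; t^{-1/2}\,2^{k(1-\alpha)/2}\Bigl(\|\psi_k\widehat f\|_{L^\infty} + \|\partial_\xi(\psi_k\widehat f)\|_{L^1}\Bigr),
\]
in which the first factor is bounded by $\min(2^{-k(1-\alpha)/4},2^{-10k})\|f\|_Z$ directly from the $Z$-norm, and the second by Cauchy--Schwarz on an interval of length $2^k$ combined with $\|\partial_\xi\widehat f\|_{L^2}=\|xf\|_{L^2}\leq \|f\|_{H^{1,1}}\lesssim t^{p_0}$. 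For very high $k$, where $|\Phi''|$ is too small, Bernstein plus the Sobolev hypothesis yields $\|P_k u\|_{L^\infty}\lesssim 2^{k(1/2-N_0)}\|u\|_{H^{N_0}}\lesssim 2^{k(1/2-N_0)}\,t^{p_0}$. For very low $k$, where the dyadic window is too short for stationary phase, the trivial Hausdorff--Young bound $\|P_ku\|_{L^\infty}\leq \|\widehat{P_ku}\|_{L^1}\lesssim 2^k\|\psi_k\widehat f\|_{L^\infty}\lesssim 2^{k(3+\alpha)/4}$ is used.

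These bounds are then assembled with cutoffs $k_1<0<k_2$ chosen as functions of $t$, so that Hausdorff--Young is applied for $k\leq k_1$, van der Corput for $k_1<k\leq k_2$, and Bernstein--Sobolev for $k>k_2$, with the three regimes matching at the thresholds. The $Z$-norm part of the van der Corput bound sums geometrically to $\lesssim t^{-1/2}$ (because the exponents $(1-\alpha)/4$ and $(1-\alpha)/2-10$ have opposite signs in $k$), the Hausdorff--Young sum is dominated by $2^{k_1(3+\alpha)/4}\lesssim t^{-1/2}$ since $(3+\alpha)/(4(1+\alpha))\geq 1/2$ when $\alpha\leq 1$, and the Bernstein--Sobolev sum over $k>k_2$ is negligible thanks to $N_0=100$; the $t^{p_0}$ losses from the weighted and Sobolev norms remain strictly subdominant to $t^{-1/2}$ thanks to the smallness of $p_0$ imposed in the statement of the Theorem. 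The bound on $\|\partial_xu\|_{L^\infty}$ is obtained identically with an extra factor $2^k$ inside each block; summability is preserved by the $|\xi|^{10}$ weight at high frequency and the positive exponent $(3+\alpha)/4+1$ at low frequency.

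The main technical difficulty is the uniform treatment in $\alpha\in(-1,1)\setminus\{0\}$: the curvature $|\xi|^{\alpha-1}$ has opposite qualitative behaviour in the low- and high-frequency regimes depending on the sign of $\alpha$, yet the $Z$-norm weight $|\xi|^{(1-\alpha)/4}$ is precisely tailored so that the stationary-phase main term $t^{-1/2}|\xi_0|^{(1-\alpha)/2}|\widehat f(\xi_0)|$ stays bounded by $t^{-1/2}$ uniformly in the stationary point $\xi_0(x/t)$. Arranging this balance for both signs of $\alpha$ while keeping the Sobolev losses lower order is the heart of the argument.
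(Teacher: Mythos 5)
Your overall plan (dyadic decomposition, stationary phase in the intermediate range, trivial bounds at the two ends) is the same as the paper's, but the pointwise block estimate you invoke is too weak and the threshold bookkeeping does not close. The standard van der Corput bound you write,
\begin{equation*}
\|P_k u\|_{L^\infty} \;\lesssim\; t^{-1/2}\,2^{k(1-\alpha)/2}\Bigl(\|\psi_k\widehat f\|_{L^\infty} + \|\partial_\xi(\psi_k\widehat f)\|_{L^1}\Bigr),
\end{equation*}
after Cauchy--Schwarz on the dyadic window gives a second contribution of size $t^{-1/2+p_0}\,2^{k(2-\alpha)/2}$, whose exponent in $k$ is strictly positive. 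Summing it over the intermediate range forces $2^{k_2(2-\alpha)/2}\lesssim t^{-p_0}$, i.e.\ $k_2<0$, while your Bernstein--Sobolev tail needs $2^{k_2(1/2-N_0)}t^{p_0}\lesssim t^{-1/2}$, i.e.\ $2^{k_2}\gtrsim t^{(1/2+p_0)/(N_0-1/2)}$, a positive power of $t$. These two constraints are incompatible, so the matching at $k_2$ fails and the argument as written does not deliver $t^{-1/2}$.

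The paper avoids this by using a genuinely sharper linear dispersive estimate (Lemma in the Appendix, eq.\ \eqref{l4}/\eqref{d1}) that splits the output into a $t^{-1/2}$ piece weighted by $\|\widehat g\|_{L^\infty}$ (controlled by the $Z$-norm, just as you do) and a \emph{separate} piece with faster decay $t^{-3/4}$ weighted by $\|\widehat g\|_{L^2}+2^k\|\partial\widehat g\|_{L^2}$. This extra $t^{-1/4}$ of decay is obtained by a finer dyadic analysis around the stationary point (inner ball of radius $\sim t^{-1/2}2^{k(1-\alpha/2)}$ plus integration by parts on dyadic annuli), and it is exactly what absorbs the $t^{p_0}$ loss from $\|f\|_{H^{1,1}}$ when summing the intermediate frequencies up to $2^{k_2}\sim t^{(1-4p_0)/5}$. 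Without that refinement, the $H^{1,1}$ contribution cannot be made subdominant to $t^{-1/2}$ over the needed range. If you want to salvage your plan, you would need to prove (or import) this $t^{-3/4}$ refinement rather than rely on the off-the-shelf van der Corput inequality.
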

\begin{proof} The proof for \(\alpha\in (-1,0)\) is included in \cite[lemma 2.2]{SW1}, so we only focus on the case of \(\alpha\in (0,1)\) in the following. Observe that 
	\(\frac{1-\alpha}{2}\in(0,\frac{1}{2})\) and \(-\frac{1+3\alpha}{4}\in(-1,-\frac{1}{4})\). 
	In the frequency regime \(2^k\geq t^{(1-4p_0)/5}\),  we use \eqref{l4} and
	\eqref{ap1} to deduce that 
	\begin{equation}\label{high frequency}
	\begin{aligned}
	\quad 2^{3k/2}\left\|P_ku\right\|_{L^\infty}
	&\lesssim t^{-\frac{1}{2}}2^{3 k/2}2^{(1-\alpha)k/2}2^{-k/2}2^{-N_0k/2}\|P_k^\prime f\|_{H^{N_0}}^{1/2}\\
	&\quad\times(\|\widehat{P_k^\prime f}\|_{L^2}+2^k\|\partial\widehat{P_k^\prime f}\|_{L^2})^{1/2}\\
	&\lesssim t^{-\frac{1}{2}}t^{(1-4p_0)(3-\alpha-N_0)/10}t^{p_0}\lesssim t^{-\frac{1}{2}}.
	\end{aligned}
	\end{equation} 
	We next consider the frequency regime \(2^k\leq t^{(1-4p_0)/5}\). Noticing that
	\begin{equation*}
	\begin{aligned}
	(2^{(1-\alpha)k/4}+2^{10k})\big\|\widehat{P_k^\prime f}\big\|_{L^\infty}\lesssim 1,\quad \|\widehat{P_k^\prime f}\|_{L^2}+2^k\|\partial\widehat{P_k^\prime f}\|_{L^2}\lesssim t^{p_0}, 
	\end{aligned}
	\end{equation*}
	and
	\begin{equation*}
	\begin{aligned}
	\frac{2^{(\frac{1-\alpha}{2}+\frac{3}{2})k}}{2^{(1-\alpha)k/4}+2^{10k}}\lesssim 1,\quad 2^{(-\frac{1+3\alpha}{4}+\frac{3}{2})k} \lesssim t^{\frac{1}{4}-p_0},
	\end{aligned}
	\end{equation*}   
	we then deduce from \eqref{l4} that
	\begin{equation*}
	\begin{aligned}
	2^{3k/2}\left\|P_ku\right\|_{L^\infty}
	&\lesssim t^{-\frac{1}{2}}2^{(\frac{1-\alpha}{2}+\frac{3}{2})k}\big\|\widehat{P_k^\prime f}\big\|_{L^\infty}\\
	&\quad+t^{-\frac{3}{4}}2^{(-\frac{1+3\alpha}{4}+\frac{3}{2})k}\big(\big\|\widehat{P_k^\prime f}\big\|_{L^2}+2^k\big\|\partial\widehat{P_k^\prime f}\big\|_{L^2}\big)\\
	&\lesssim t^{-\frac{1}{2}}.
	\end{aligned}
	\end{equation*}
	Hence we have shown
	\begin{equation}\label{l7}
	\begin{aligned}
	\quad 2^{3k/2}\left\|P_ku\right\|_{L^\infty}
	\lesssim t^{-\frac{1}{2}}.
	\end{aligned}
	\end{equation}

	To prove that  \(\|u\|_{L^\infty}\) satisfies \eqref{l6}, we first estimate the contribution of positive frequencies using \eqref{l7} : 
	\begin{equation*}
	\begin{aligned}
	\sum_{k\geq 0}\|P_ku\|_{L^\infty}\leq \sup_{k\geq 0}(2^{3 k/2}\left\|P_k u\right\|_{L^\infty})\sum_{k\geq 0}2^{-3k/2}
	\lesssim t^{-\frac{1}{2}} .
	\end{aligned}
	\end{equation*}
	For the negative frequencies, one first notices that  the contribution on  frequencies \(2^k\leq t^{-1}\) is obvious due to the following inequality 
	\begin{equation*}
	\begin{aligned}
	\|P_k u\|_{L^\infty}
	\lesssim 2^{k/2}\|u\|_{L^2}.
	\end{aligned}
	\end{equation*}
	Thus it remains to estimate the contribution of the frequencies \(t^{-1}\leq 2^k\leq 1\). To this aim, we split this range into two regimes \(t^{-\frac{4p_0}{1-\alpha}}\leq 2^k\leq 1 \) and \(t^{-1}\leq 2^k\leq t^{-\frac{4p_0}{1-\alpha}}\). For the latter case, it follows from \eqref{l5} that
	\begin{equation*}
	\begin{aligned}
	\sum_{t^{-1}\leq 2^k\leq t^{-\frac{4p_0}{1-\alpha}}}\left\|P_ku\right\|_{L^\infty}
	\lesssim t^{-\frac{1}{2}}t^{-2p_0}t^{p_0}\lesssim t^{-\frac{1}{2}}.
	\end{aligned}
	\end{equation*}
	In view of \eqref{l4}, the former case may be handled as follows:
	\begin{equation*}
	\begin{aligned}
	\sum_{t^{-\frac{4p_0}{1-\alpha}}\leq 2^k\leq 1}\left\|P_ku\right\|_{L^\infty}
	\lesssim t^{-\frac{1}{2}}\frac{2^{\frac{1-\alpha}{2}k}}{2^{(1-\alpha)k/4}+2^{10k}}
	+t^{-\frac{3}{4}}t^{\frac{(1+3\alpha)p_0}{(1-\alpha)}}t^{p_0}\lesssim t^{-\frac{1}{2}},
	\end{aligned}
	\end{equation*}
	in which we used the assumption \(p_0\in (0,10^{-3}(1-\alpha)]\).

	We finally prove the desired bound \eqref{l6} for \(\|\partial_xu\|_{L^\infty}\).  Indeed, by \eqref{l7}, one may estimate
	\begin{equation*}
	\begin{aligned}
	\|\partial_xu\|_{L^\infty}
	\lesssim \|u\|_{L^\infty}\sum_{k\leq 0}2^k
	+\sup_{k\geq 0}(2^{3 k/2}\left\|P_k u\right\|_{L^\infty})\sum_{k\geq 0}2^{-k/2}
	\lesssim t^{-\frac{1}{2}}.
	\end{aligned}
	\end{equation*}

\end{proof}

Lemma \ref{decay:fKdV} shows that \eqref{fKdV-decay} is a consequence of \eqref{fKdV2}.

\subsection{The main estimates}

We first have the following energy estimates: 
\begin{proposition}\label{lpr:1} Let \(u\) be a solution of the equation \eqref{eq:main-1} with initial data \eqref{eq:initial-1}  satisfying the a priori bounds \eqref{l8}. Then the following estimates hold true:
	\begin{align*}
	\|u(t,\cdot)\|_{H^{N_0}}\leq C\varepsilon_0\langle t \rangle^{C\varepsilon_1^2},
	\end{align*}
	and
	\begin{align*}
	\|f(t,\cdot)\|_{H^{1,1}}\leq C(\varepsilon_0+ \varepsilon_1^3)\langle t \rangle^{C\varepsilon_1^2}.
	\end{align*}	
\end{proposition}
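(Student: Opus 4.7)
My plan splits along the two bounds in the statement.

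For the $H^{N_0}$ estimate, I would run a standard energy argument: apply $\langle D\rangle^{N_0}$ to \eqref{eq:main-1} and pair against $\langle D\rangle^{N_0}u$. Since $|D|^\alpha\partial_x$ is skew-adjoint, only the cubic term survives. The nonlinear contribution $\int u^2\partial_x u\cdot \langle D\rangle^{2N_0}u\,dx$ is treated by splitting
\[
\langle D\rangle^{N_0}(u^2\partial_x u) = u^2\langle D\rangle^{N_0}\partial_x u + \bigl[\langle D\rangle^{N_0}, u^2\bigr]\partial_x u.
\]
The first piece gives, after integration by parts, $-\tfrac12\int \partial_x(u^2)\bigl(\langle D\rangle^{N_0}u\bigr)^2 dx$. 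The commutator is controlled by Kato--Ponce by $(\|u\|_{L^\infty}\|u_x\|_{L^\infty}+\|u\|_{L^\infty}^2)\|u\|_{H^{N_0}}$. Invoking Lemma~\ref{decay:fKdV} yields
$\frac{d}{dt}\|u\|_{H^{N_0}}^2\lesssim \varepsilon_1^2 t^{-1}\|u\|_{H^{N_0}}^2$, and Gronwall on $[1,t]$, starting from $\|u(1)\|_{H^{N_0}}\lesssim\varepsilon_0$ via local well-posedness, gives the first bound.

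For $\|f\|_{H^{1,1}}$, the $H^1$ part equals $\|u\|_{H^1}\leq \|u\|_{H^{N_0}}$ and is already controlled. The weighted part $\|xf\|_{H^1}$ is treated through the Galilean-type vector field $J:=x+t(\alpha+1)|D|^\alpha$. A Fourier computation based on $[x,|D|^\alpha\partial_x]=-(\alpha+1)|D|^\alpha$ shows $xe^{-t|D|^\alpha\partial_x}=e^{-t|D|^\alpha\partial_x}J$, so $\|xf\|_{L^2}=\|Ju\|_{L^2}$; after commuting $[J,\partial_x]=-1$ the $\dot H^1$ part reduces to $\|\partial_x(Ju)\|_{L^2}$. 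A direct calculation yields
\[
(\partial_t-|D|^\alpha\partial_x)(Ju)=\mp J(u^2\partial_x u),
\]
and an $L^2$ energy estimate on $Ju$ (and on $\partial_x Ju$) reduces everything to bounding $(J(u^2\partial_x u),Ju)$. Using $[x,u^2\partial_x]=-u^2$ and $[|D|^\alpha,\partial_x]=0$, I would decompose
\[
J(u^2\partial_x u)=u^2\partial_x(Ju)-u^3+t(\alpha+1)[|D|^\alpha,u^2]\partial_x u.
\]

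The first contribution is handled by integration by parts, $-\tfrac12\int\partial_x(u^2)(Ju)^2\,dx$, bounded by $\varepsilon_1^2 t^{-1}\|Ju\|_{L^2}^2$ via Lemma~\ref{decay:fKdV}. The second is bounded by $\|u\|_{L^\infty}^2\|u\|_{L^2}\|Ju\|_{L^2}\lesssim \varepsilon_0\varepsilon_1^2 t^{-1}\|Ju\|_{L^2}$ using conservation of the $L^2$ mass. The third, carrying the dangerous factor of $t$, is treated by a fractional commutator estimate of Kenig--Ponce--Vega type,
\[
\|[|D|^\alpha,u^2]\partial_x u\|_{L^2}\lesssim \|\partial_x(u^2)\|_{L^\infty}\,\||D|^{\alpha-1}\partial_x u\|_{L^2}\lesssim \varepsilon_1^2 t^{-1}\|u\|_{H^1},
\]
so that the $t$ prefactor is absorbed, and combining with the $H^{N_0}$ bound from the first step gives an overall contribution $\lesssim\varepsilon_1^3\,t^{-1}\langle t\rangle^{C\varepsilon_1^2}\|Ju\|_{L^2}$. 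Summing yields
$\frac{d}{dt}\|Ju\|_{L^2}\lesssim \varepsilon_1^2 t^{-1}\|Ju\|_{L^2}+\varepsilon_1^3 t^{-1}\langle t\rangle^{C\varepsilon_1^2},$
and a Gronwall argument closes the second bound with the claimed $C(\varepsilon_0+\varepsilon_1^3)\langle t\rangle^{C\varepsilon_1^2}$.

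The main obstacle is the commutator $t[|D|^\alpha,u^2]\partial_x u$: the $t$ prefactor and the fact that $|D|^\alpha$ is smoothing for $-1<\alpha<0$ but differentiating for $0<\alpha<1$ force a careful analysis. For $\alpha>0$ a Calder\'on-type commutator extracting one derivative via the mean-value theorem on $|\xi|^\alpha$ suffices; for $\alpha<0$ the smoothing character of $|D|^\alpha$ is best exploited through a Littlewood--Paley paraproduct decomposition distinguishing high-low from low-high interactions, which is also what delivers the unified bound across the full range $-1<\alpha<1,\ \alpha\neq 0$. The estimate for $\|\partial_x Ju\|_{L^2}$, needed for the $\dot H^1$ component of $\|xf\|_{H^1}$, is handled exactly in parallel after commuting one further $\partial_x$ through the evolution.
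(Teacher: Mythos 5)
Your $H^{N_0}$ argument is sound and matches the standard route. The first bound (via Kato--Ponce and Gronwall) is essentially what the paper does.

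The $H^{1,1}$ argument, however, has a genuine gap at the step you flag as the ``main obstacle.'' After extracting $u^2\partial_x(Ju)-u^3+t(\alpha+1)[|D|^\alpha,u^2]\partial_x u$, the $t$-prefactor on the commutator must be absorbed by a factor $t^{-2}$ coming from the nonlinearity, not merely by $t^{-1}$. Your chain
\[
\|[|D|^\alpha,u^2]\partial_x u\|_{L^2}\lesssim \|\partial_x(u^2)\|_{L^\infty}\,\||D|^{\alpha-1}\partial_x u\|_{L^2}\lesssim \varepsilon_1^2 t^{-1}\|u\|_{H^1}
\]
puts both $L^\infty$ gains (each $t^{-1/2}$) into the $\|\partial_x(u^2)\|_{L^\infty}$ factor, while the third function is measured in an $L^2$-type norm $\||D|^{\alpha}u\|_{L^2}$, which is \emph{not} conserved (unlike $\|u\|_{L^2}$) and at best grows like $\varepsilon_1\langle t\rangle^{p_0}$ under the a priori bounds. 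Multiplying by the $t$-prefactor therefore leaves a contribution $\gtrsim\varepsilon_1^3\,\langle t\rangle^{p_0}\|Ju\|_{L^2}$, which integrates to $O(t^{1+p_0})$ and does not close the Gronwall argument. Your claimed overall contribution $\lesssim\varepsilon_1^3\,t^{-1}\langle t\rangle^{C\varepsilon_1^2}\|Ju\|_{L^2}$ carries a spurious extra factor $t^{-1}$: once the $t$-prefactor meets the $t^{-1}$ from the commutator estimate, no decay remains. (The situation is worse for $\alpha<0$, where $\||D|^{\alpha-1}\partial_xu\|_{L^2}=\||D|^\alpha u\|_{L^2}$ is not even controlled by the listed norms for $\alpha\le -1/5$; the paraproduct remark does not resolve the lack of decay in $t$.)

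The paper (see the proof of Proposition~\ref{pr:1} in the fNLS case, to which the fKdV proof is parallel via \cite{SW1}) circumvents this by introducing the scaling vector field $S=(\alpha+1)t\partial_t+x\partial_x$ alongside $\mathcal J$. Because $[\mathcal L,S]=(\alpha+1)\mathcal L$ (with $\mathcal L$ the linear operator), the evolution of $Su$ contains no dangerous $t$-prefactor and yields the Gronwall bound $\|Su\|_{L^2}\lesssim\varepsilon_0\langle t\rangle^{C\varepsilon_1^2}$ directly. Then the identity
\[
\xi\partial_\xi\widehat f(\xi)=e^{-\mathrm i t\xi|\xi|^\alpha}\Big[(\alpha+1)\,t\,\widehat{\mathcal N(u)}(\xi)-\widehat u(\xi)-\widehat{Su}(\xi)\Big]
\]
reduces $\|x\partial_x f\|_{L^2}$ to $t\|\mathcal N(u)\|_{L^2}+\|u\|_{L^2}+\|Su\|_{L^2}$, and here the $t$-prefactor is harmless because
\[
t\|\mathcal N(u)\|_{L^2}\lesssim t\,\|u\|_{L^\infty}\|\partial_xu\|_{L^\infty}\|u\|_{L^2}\lesssim\varepsilon_1^2\varepsilon_0,
\]
using the \emph{conserved} $L^2$ mass for the third factor. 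This is precisely the structural gain your decomposition fails to capture: the naive $J$-energy estimate lands the $L^2$ norm on $|D|^\alpha u$ rather than on the conserved $\|u\|_{L^2}$. I suggest you replace the direct energy estimate for $\partial_x Ju$ with the $S$-based argument.
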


\begin{proof} With the a priori bounds \eqref{l8}, the proof is identical to a similar one in  \cite[Theorem 3.1]{SW1}, thereby we omit it.
\end{proof}

To complete the proof of Theorem \ref{th:main-1},  it remains to show that \(\|f\|_Z\) is controlled by the bound on the right hand side of \eqref{l9}.  For such purpose, we begin by 
taking the Fourier transform of \eqref{eq:main-1} leading to
\begin{equation}\label{l10}
\begin{aligned}
\partial_t\widehat{f}(\xi,t)=-\frac{\mathrm{i}\xi}{6\pi}\int_{\mathbb{R}^2}e^{-\mathrm{i}t\Phi(\xi,\eta,\sigma)}
\widehat{f}(\xi-\eta-\sigma,t)\widehat{f}(\eta,t)\widehat{f}(\sigma,t)\, d \eta d \sigma
=\colon I(\xi,t),
\end{aligned}
\end{equation}
where the phase function \(\Phi\) takes the form
\begin{equation*}
\begin{aligned}
\Phi(\xi,\eta,\sigma)=|\xi|^{\alpha}\xi-|\xi-\eta-\sigma|^{\alpha}(\xi-\eta-\sigma)-|\eta|^{\alpha}\eta-|\sigma|^{\alpha}\sigma.
\end{aligned}
\end{equation*}
However the contribution of the space-time resonance in \(I(\xi,t)\) is not integrable, and one needs to remove it from the ODE \eqref{l10} by using an integrating factor which leads to the phase correction in the asymptotic behavior of the solutions. 
Hence, upon introducing the function
\begin{align}\label{l11}
H(\xi,t):=\frac{\xi|\xi|^{1-\alpha}}{|\alpha|(\alpha+1)}\int_1^t|\widehat{f}(\xi,s)|^2\,\frac{d s}{s},
\end{align}	
we define
\begin{align*}
g(\xi,t):=e^{\mathrm{i}H(\xi,t)}\widehat{f}(\xi,t),
\end{align*}
and we can rewrite the ODE \eqref{l10} as
\begin{align}\label{l12}
\partial_t g(\xi,t)=e^{\mathrm{i}H(\xi,t)}\big[ I(\xi,t)-\mathrm{i}\tilde{c}t^{-1}\xi|\xi|^{1-\alpha}|\widehat{f}(\xi,t)|^2\widehat{f}(\xi,t)\big],
\end{align}
here the constant \(\tilde{c}:=-[|\alpha|(\alpha+1)]^{-1}\).
To show the \(Z\)-norm bound in \eqref{l9}, it suffices to prove the following proposition: 
\begin{proposition}\label{lpr:2}
	It holds that
	\begin{align}\label{l13}
	t_1^{p_0}\left\|(|\xi|^{(1-\alpha)/4}+|\xi|^{10})\big[g(\xi,t_2)-g(\xi,t_1)\big]\right\|_{L^\infty_\xi}\lesssim\epsilon_0,
	\end{align}
	for any \(t_1\leq t_2\in[1,T]\).
\end{proposition}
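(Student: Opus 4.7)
The plan is to integrate the ODE \eqref{l12} from $t_1$ to $t_2$, obtaining
$$g(\xi,t_2)-g(\xi,t_1)=\int_{t_1}^{t_2}e^{\mathrm{i}H(\xi,s)}\bigl[I(\xi,s)-\mathrm{i}\tilde c\,s^{-1}\xi|\xi|^{1-\alpha}|\widehat f(\xi,s)|^2\widehat f(\xi,s)\bigr]ds,$$
so, since $|e^{\mathrm{i}H}|=1$, the goal reduces to an $L^\infty_\xi$ bound on the bracketed integrand of the form $C\varepsilon_1^3\,s^{-1-p_0-\delta}(|\xi|^{(1-\alpha)/4}+|\xi|^{10})^{-1}$ for some $\delta>0$. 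Upon integrating in $s$ this produces the factor $t_1^{-p_0-\delta}$, which together with the prefactor $t_1^{p_0}$ yields \eqref{l13}.

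First I would perform a Littlewood--Paley decomposition $\widehat f=\sum_k\widehat{f_k}$ inside the trilinear integral $I(\xi,s)$ and localize $\xi$ dyadically. The nonlinearity $u^2\partial_x u$ being real-valued splits the oscillatory integral into eight sign configurations of $(\xi-\eta-\sigma,\eta,\sigma)$, and for each configuration I would inspect the phase
$$\Phi(\xi,\eta,\sigma)=|\xi|^\alpha\xi-|\xi-\eta-\sigma|^\alpha(\xi-\eta-\sigma)-|\eta|^\alpha\eta-|\sigma|^\alpha\sigma.$$
The space-resonant set is $\{\nabla_{\eta,\sigma}\Phi=0\}$; in the sign configurations where the phase is ``elliptic'' the only stationary point is the full space-time resonance $(\eta,\sigma)=(0,0)$, at which a stationary-phase expansion yields
$$I(\xi,s)\sim \tilde c^{-1}\,s^{-1}\,\xi|\xi|^{1-\alpha}|\widehat f(\xi,s)|^2\widehat f(\xi,s)+R(\xi,s),$$
which is precisely why $H(\xi,t)$ was designed as in \eqref{l11}: the leading nonintegrable $s^{-1}$ contribution is cancelled, and only $R(\xi,s)$ remains.

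For the remainder $R(\xi,s)$ and for the nonresonant configurations, the strategy is a standard space-time resonance dichotomy. In dyadic regions where $\partial_\eta\Phi$ or $\partial_\sigma\Phi$ is bounded below, I would integrate by parts in the corresponding variable, gaining a factor $s^{-1}$ while one derivative lands on a profile $\widehat{f_{k_j}}$ controlled by $\|f\|_{H^{1,1}}\lesssim s^{p_0}\varepsilon_1$. In regions where $|\Phi|$ is bounded below, I would integrate by parts in time, again winning $s^{-1}$ at the price of $\partial_s\widehat f$, which via \eqref{l10} is itself cubic and hence enters with one extra factor $\varepsilon_1^2$. The pointwise $Z$-norm bound on $\widehat{f_{k_j}}$, which gives $|\widehat{f_{k_j}}|\lesssim 2^{-k_j(1-\alpha)/4}$ at low frequency and $|\widehat{f_{k_j}}|\lesssim 2^{-10k_j}$ at high frequency, then absorbs the weight $|\xi|^{(1-\alpha)/4}+|\xi|^{10}$ after dyadic summation in $(k,k_1,k_2,k_3)$ under the frequency-balance constraints.

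The main obstacle will be the small-frequency regime $|\xi|\ll 1$ combined with the ``near-resonant'' zone $|\eta|+|\sigma|\lesssim|\xi|$, where the phase degenerates (since $|\xi|^\alpha\to 0$ or $\infty$ depending on the sign of $\alpha$) and neither spatial nor temporal IBP gives full gain. Following the improvement described in the authors' discussion around \eqref{l18}, the remedy is a finer dyadic localization in $|\eta|,|\sigma|$ that exchanges a space IBP against a small support size, converting the degenerate $\nabla\Phi$ loss into an acceptable $2^{k(1-\alpha)}$ factor; the restriction $p_0\leq 10^{-3}|\alpha|\wedge 10^{-3}(1-\alpha)$ is then precisely what ensures that the resulting loss $s^{C p_0}$ does not overwhelm the gain $s^{-1}$. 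A parallel but simpler argument handles the high-frequency regime $|\xi|\gg 1$, where the weight $|\xi|^{10}$ is absorbed by $H^{N_0}$ through derivatives produced by IBP, using $N_0=100$. Summing the dyadic contributions and integrating in $s$ then yields \eqref{l13}.
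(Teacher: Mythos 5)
Your overall strategy is recognizably the paper's: dyadic frequency decomposition of the trilinear integral, splitting into the eight sign configurations of $(\xi-\eta-\sigma,\eta,\sigma)$, a space--time resonance dichotomy using integration by parts either in $\eta,\sigma$ or in $s$, a finer sub-dyadic localization of $(\eta,\sigma)$ near the stationary point to handle the resonant block, and a stationary-phase computation extracting the $s^{-1}\xi|\xi|^{1-\alpha}|\widehat f(\xi)|^2\widehat f(\xi)$ leading term that the phase correction $H$ is designed to kill. That is indeed how the paper proceeds (reduction to Proposition~\ref{lpr:3} and its case analysis, plus Lemma~\ref{le:2} for the resonant block).

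However, there are two concrete misstatements. First, the space--time resonance is \emph{not} at $(\eta,\sigma)=(0,0)$: for $\Phi(\xi,\eta,\sigma)=|\xi|^\alpha\xi-|\xi-\eta-\sigma|^\alpha(\xi-\eta-\sigma)-|\eta|^\alpha\eta-|\sigma|^\alpha\sigma$ the stationary-phase point (within the relevant sign sector) is $(\eta,\sigma)=(\xi,-\xi)$, which is why the extracted main term reads $|\widehat f(\xi)|^2\widehat f(\xi)$; at $(0,0)$ the profile factors would evaluate at $0$, and $\partial_\eta\Phi$ does not even vanish there. Second, and more structurally, your initial reduction to a \emph{pointwise-in-$s$} bound of the form $\varepsilon_1^3 s^{-1-p_0-\delta}(|\xi|^{(1-\alpha)/4}+|\xi|^{10})^{-1}$ on the integrand is incompatible with the time integration by parts you then invoke: after IBP in $s$ one obtains boundary terms $A_j(\xi,t_j)$ evaluated at the endpoints, which are not integrated and hence cannot be absorbed by an integrand bound. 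The paper instead reduces to the per-dyadic-time-block increment estimate \eqref{l16} on $[2^m,2^{m+1}]$; this dyadic time localization is essential, both to bound the IBP boundary terms and, crucially, to set up the $m$-dependent frequency window $k\in[-m/100,p_0m]$ (via \eqref{l17}), without which the phase lower bounds $|\Phi|\gtrsim 2^{-30(\alpha+1)p_0m}$ or $2^{-(\alpha+1)m/100}$ that make the temporal IBP effective would not exist. You should replace the pointwise integrand reduction by the dyadic-in-time block reduction \eqref{l16} and spell out the $m$-dependent frequency restriction before the case analysis.
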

It is easy to see that the estimate \eqref{fKdV3} is a consequence of \eqref{l13}.

\subsubsection{Reduction of \eqref{l13}}\label{reduction:fKdV}
In order to show \eqref{l13}, 
we split \(I(\xi,t)\) on the right hand side of \eqref{l10} in frequencies
\begin{equation}\label{l14}
\begin{aligned}
I(\xi,t)=-\mathrm{i}(6\pi)^{-1}\sum_{k_1,k_2,k_3\in \mathbb{Z}}I_{k_1,k_2,k_3}(\xi,t),
\end{aligned}
\end{equation}
in which each \(I_{k_1,k_2,k_3}(\xi,t)\) is given by
\begin{equation}\label{l15}
\begin{aligned}
I_{k_1,k_2,k_3}(\xi,t):=\xi\int_{\mathbb{R}^2}e^{-\mathrm{i}t\Phi(\xi,\eta,\sigma)}\widehat{f_{k_1}}(\xi-\eta-\sigma,t)\widehat{f_{k_2}}(\eta,t)\widehat{f_{k_3}}(\sigma,t)\, d \eta d \sigma.
\end{aligned}
\end{equation}

For the proof of \eqref{l13}, it suffices to establish that for \(m\in \{1,2,\dots\}\) it holds
\begin{align}\label{l16}
\left\|(|\xi|^{(1-\alpha)/4}+|\xi|^{10})\big[g(\xi,t_2)-g(\xi,t_1)\big]\right\|_{L^\infty_\xi}\lesssim\epsilon_02^{-p_0m},
\end{align}
for any \(t_1\leq t_2\in [2^m-1,2^{m+1}]\cap[1,T]\).

Let \(k\in\mathbb{Z}\) and \(m\in \{1,2,\dots\}\),  we always assume that \(|\xi|\in[2^k,2^{k+1}]\) and \(s\in[2^m-1,2^{m+1}]\cap[1,T]\) hereafter.
We first observe that for  \(k\in [p_0m,\infty)\cap(-\infty,-10p_0m/(1-\alpha)]\) there holds
\begin{equation}\label{l17}
\begin{aligned}
\big|(|\xi|^{(1-\alpha)/4}+|\xi|^{10})\widehat{f_k}(\xi)\big|\lesssim \epsilon_02^{-p_0m},
\end{aligned}
\end{equation}
which proves \eqref{l16} in this frequency regime. Indeed, via the interpolation estimate  \eqref{ap1}, the case \(k\in [p_0m,\infty)\) can be handled in a similar manner as \eqref{high frequency}, and for the other case \(k\in (-\infty,-10p_0m/(1-\alpha)]\) we estimate
\begin{equation*}
\begin{aligned}
&\big|(|\xi|^{(1-\alpha)/4}+|\xi|^{10})\widehat{f_k}(\xi)\big|
\lesssim 2^{(1-\alpha)k/4}\|f\|_{H^{1,1}}\\
&\lesssim \epsilon_02^{(1-\alpha)k/4}2^{p_0m}\lesssim \epsilon_02^{-p_0m}.
\end{aligned}
\end{equation*}

We recall the notation "positive part" and "negative part"
\begin{equation*}
\begin{aligned}
l_+=\max(l,0),\quad l_-=\max(-l,0),
\end{aligned}
\end{equation*}
and define the short-hand notation
\begin{equation}\label{notation:positive-negative}
l_{\pm}=
\begin{cases}
-10l_+,&\quad \text{if}\ l\geq 0,\\
(1-\alpha)l_-/4,&\quad \text{if}\ l< 0.
\end{cases}
\end{equation}
It follows from the a priori bounds \eqref{l8} and the localization that 
\begin{equation}\label{l18}
\begin{aligned}
&\|\widehat{f_l}(s)\|_{L^2}\lesssim \epsilon_12^{p_0m}\min(2^{-N_0l_+},2^{l/2}),\\
&\|\partial\widehat{f_l}(s)\|_{L^2}\lesssim \epsilon_12^{p_0m}\min\big[2^{-l},\max(2^{-l/2},1)\big],\\
&\|\widehat{f_l}(s)\|_{L^\infty}\lesssim \epsilon_12^{l_{\pm}},\\
&\|e^{s|D|^{\alpha}\partial_x}f_l(s)\|_{L^\infty}\lesssim \epsilon_12^{-m/2},
\end{aligned}
\end{equation}
for any  \(s\in[2^m-1,2^{m+1}]\cap[1,T]\) and any \(l\in\mathbb{Z}\).

According to the equation \eqref{l10} and the decomposition \eqref{l14}-\eqref{l15}, and taking \eqref{l17} into account, it suffices to complete the proof of \eqref{l16}  to prove
\begin{equation}\label{l19}
\begin{aligned}
&\sum_{k_1,k_2,k_3\in \mathbb{Z}}\bigg|\int_{t_1}^{t_2}e^{\mathrm{i}H(\xi,s)}\big[ I_{k_1,k_2,k_3}(\xi,s)-\tilde{\tilde{c}}s^{-1}\xi|\xi|^{1-\alpha}\widehat{f_{k_1}}(\xi,s)\widehat{f_{k_2}}(\xi,s)\widehat{f_{k_3}}(-\xi,s)\\
&\quad\quad\quad\quad\quad\quad\quad\quad\quad\quad\quad\quad\quad\quad\quad-\tilde{\tilde{c}}
s^{-1}\xi|\xi|^{1-\alpha}\widehat{f_{k_1}}(\xi,s)\widehat{f_{k_2}}(-\xi,s)\widehat{f_{k_3}}(\xi,s)\\
&\quad\quad\quad\quad\quad\quad\quad\quad\quad\quad\quad\quad\quad-\tilde{\tilde{c}}s^{-1}\xi|\xi|^{1-\alpha}\widehat{f_{k_1}}(-\xi,s)\widehat{f_{k_2}}(\xi,s)\widehat{f_{k_3}}(\xi,s)\big]\, d s\bigg|\\
&\quad\quad\quad\quad\quad\quad\quad\quad\quad\quad\quad\quad\quad\quad\quad\lesssim
\epsilon_1^32^{-p_0m}2^{-10k_+},
\end{aligned}
\end{equation}
for \(t_1\leq t_2\in [2^m-1,2^{m+1}]\cap [1,T]\) and \(|\xi|\in[2^k,2^{k+1}]\) with \(k\in [-10p_0m/(1-\alpha),p_0m]\), where \(\tilde{\tilde{c}}:=2\pi/[|\alpha|(\alpha+1)]\).

A further reduction on \eqref{l19} is possible, indeed
using the a priori bounds \eqref{l18}, one can easily check that
\begin{equation}\label{l20}
\begin{aligned}
|I_{k_1,k_2,k_3}(\xi,s)|&\lesssim \epsilon_1^32^{3p_0m}2^k2^{\min(k_1,k_2,k_3)/2}2^{-N_0({k_1}_++{k_2}_++{k_3}_+)},
\end{aligned}
\end{equation}
and
\begin{equation}\label{l21}
\begin{aligned}
|I_{k_1,k_2,k_3}(\xi,s)|\lesssim \epsilon_1^32^k2^{\min(k_1,k_2,k_3)}2^{\text{med}(k_1,k_2,k_3)}2^{[\max(k_1,k_2,k_3)]_{\pm}},
\end{aligned}
\end{equation}
and 
\begin{equation}\label{l22}
\begin{aligned}
&\quad s^{-1}|\xi|^{2-\alpha}\big(\big|\widehat{f_{k_1}}(\xi,s)\widehat{f_{k_2}}(\xi,s)\widehat{f_{k_3}}(-\xi,s)\big|+\big|\widehat{f_{k_1}}(\xi,s)\widehat{f_{k_2}}(-\xi,s)\widehat{f_{k_3}}(\xi,s)\big|\\
&\quad\quad\quad\quad\quad\quad\quad\quad\quad\quad\quad\quad\quad+\big|\widehat{f_{k_1}}(-\xi,s)\widehat{f_{k_2}}(\xi,s)\widehat{f_{k_3}}(\xi,s)\big|\big)\\
&\lesssim \epsilon_1^32^{-m}2^{(2-\alpha)k}2^{3k_{\pm}}{\bf{1}}_{[0,4]}\big(\max(|k_1-k|,|k_2-k|,|k_3-k|)\big),
\end{aligned}
\end{equation}
where \({\bf{1}}_A(x)\) is a characteristic function which equals \(1\) for \(x\in A\), and equals \(0\) otherwise.
With the bounds \eqref{l20}-\eqref{l22} at hand, it is straightforward to verify \eqref{l19} if one of the following conditions holds
\begin{equation*}
\begin{aligned}
&\min(k_1,k_2,k_3)\leq -4m,\\
&\max(k_1,k_2,k_3)\geq p_0m/10,\\
&\min(k_1,k_2,k_3)+\mathrm{med}(k_1,k_2,k_3)\leq -(1+10p_0)m.
\end{aligned}
\end{equation*}

On the other hand, one notices that \(-10p_0m/(1-\alpha)\geq -m/100\), where we used the assumptions \(p_0\in (0,10^{-3}(1-\alpha)]\) when \(0<\alpha<1\) and \(p_0\in (0,-10^{-3}\alpha]\) when \(-1<\alpha<0\).

Therefore,  we are now in a position to reduce \eqref{l19} to the following: 
\begin{proposition}\label{lpr:3}
	Assume that \(k,k_1,k_2,k_3\in\mathbb{Z}\), \(m\in\mathbb{Z}\cap[100,\infty)\), \(|\xi|\in[2^k,2^{k+1}]\), and \(t_1\leq t_2\in [2^m-1,2^{m+1}]\cap [1,T]\). If
	\begin{equation}\label{l23}
	\begin{aligned}
	&k\in [-m/100,p_0m],\\
	&k_1,k_2,k_3\in[-4m,p_0m/10],\\
	&\min(k_1,k_2,k_3)+\mathrm{med}(k_1,k_2,k_3)\geq -(1+10p_0)m,
	\end{aligned}
	\end{equation}
	then
	\begin{equation}\label{l24}
	\begin{aligned}
	&\bigg|\int_{t_1}^{t_2}e^{\mathrm{i}H(\xi,s)}\big[ I_{k_1,k_2,k_3}(\xi,s)-\tilde{\tilde{c}}s^{-1}\xi|\xi|^{1-\alpha}\widehat{f_{k_1}}(\xi,s)\widehat{f_{k_2}}(\xi,s)\widehat{f_{k_3}}(-\xi,s)\\
	&\quad\quad\quad\quad\quad\quad\quad\quad\quad\quad\quad-\tilde{\tilde{c}}
	s^{-1}\xi|\xi|^{1-\alpha}\widehat{f_{k_1}}(\xi,s)\widehat{f_{k_2}}(-\xi,s)\widehat{f_{k_3}}(\xi,s)\\
	&\quad\quad\quad\quad\quad\quad\quad\quad\quad\quad\quad-\tilde{\tilde{c}}s^{-1}\xi|\xi|^{1-\alpha}\widehat{f_{k_1}}(-\xi,s)\widehat{f_{k_2}}(\xi,s)\widehat{f_{k_3}}(\xi,s)\big]\, d s\bigg|\\
	&\quad\quad\quad\quad\quad\quad\quad\quad\quad\quad\quad\lesssim\epsilon_1^32^{-2p_0m}2^{-10k_+}.
	\end{aligned}
	\end{equation}
\end{proposition}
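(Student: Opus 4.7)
The strategy is a standard space-time resonance analysis. I first identify the stationary points of $\Phi$ on the constraint surface $\eta_1+\eta_2+\eta_3=\xi$: imposing $\partial_\eta\Phi = \omega'(\eta_1)-\omega'(\eta_2) = 0$ and $\partial_\sigma\Phi = \omega'(\eta_1)-\omega'(\eta_3) = 0$, where $\omega'(\zeta) = (\alpha+1)|\zeta|^\alpha$ is even, yields exactly the three permutations $(\eta_1,\eta_2,\eta_3) \in \{(\xi,\xi,-\xi),(\xi,-\xi,\xi),(-\xi,\xi,\xi)\}$, all of which moreover satisfy $\Phi = 0$ (true space-time resonances). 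A short computation shows that at each of these points the Hessian of $\Phi$ in $(\eta,\sigma)$ has determinant $-\omega''(\xi)^2$ with $|\omega''(\xi)| = (\alpha+1)|\alpha|\,|\xi|^{\alpha-1}$, so the 2D stationary phase formula predicts a leading contribution to $I_{k_1,k_2,k_3}(\xi,s)$ from each resonance of the form
\[
\frac{c\,\xi}{s\,|\omega''(\xi)|}\,\widehat{f}(\xi)\widehat{f}(\xi)\widehat{f}(-\xi) \;=\; \tilde{\tilde{c}}\,s^{-1}\xi|\xi|^{1-\alpha}\,\widehat{f}(\xi)\widehat{f}(\xi)\widehat{f}(-\xi),
\]
which is precisely the correction term subtracted in \eqref{l24}. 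Proposition \ref{lpr:3} is therefore a quantitative stationary phase remainder estimate.

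To carry it out I perform a joint dyadic decomposition: localize $(\eta,\sigma)$ at scales $2^{j_1},2^{j_2}$ around each of the three resonance points, and separately localize $|\Phi|\sim 2^{j_3}$. The pieces fall into three regimes that are handled differently. First, in the truly resonant pieces, where $2^{j_1},2^{j_2} \lesssim 2^{-m/2}$ and $2^{j_3} \lesssim 2^{-m}$, I apply the classical 2D stationary phase expansion: replacing $\widehat{f_{k_i}}$ at a frequency within $O(2^{-m/2})$ of $\pm\xi$ by its value at $\pm\xi$ costs $O(2^{-m/2}\|\partial\widehat{f}\|_{L^2}) = O(\epsilon_1 2^{-(1/2-p_0)m})$ by \eqref{l18}, the stationary phase remainder after the main term is $O(s^{-3/2})$, and integrating in $s$ over $[2^m-1,2^{m+1}]$ produces a total error of order $\epsilon_1^3 2^{-(1/2-2p_0)m}$, which fits inside the required bound. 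Second, in the time-nonresonant pieces, where $|\Phi|\gtrsim 2^{-Lm}$ for a suitable $L<1/2$, I integrate by parts in $s$ via $e^{-is\Phi} = (-i\Phi)^{-1}\partial_s e^{-is\Phi}$, producing a boundary term with gain $|\Phi|^{-1}$ and a bulk term containing $\partial_s\widehat{f_{k_i}}$, which is controlled by re-applying \eqref{l10} combined with the pointwise bound on $\widehat{f}$ in \eqref{l18}. Third, in the space-nonresonant pieces, where $|\nabla_{\eta,\sigma}\Phi|\gtrsim 2^{-L'm}$, I integrate by parts in $(\eta,\sigma)$ using $e^{-is\Phi} = \frac{\nabla\Phi}{-is|\nabla\Phi|^2}\cdot\nabla e^{-is\Phi}$, picking up $s^{-1}|\nabla\Phi|^{-1}$ with the derivative falling either on $\widehat{f}$ (handled by the $H^{1,1}$ part of \eqref{l18}) or on a cutoff (losing one scale factor). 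In each regime the a priori bounds \eqref{l18}, the constraints \eqref{l23}, and the choice $p_0\leq 10^{-3}(1-\alpha)$ or $p_0\leq -10^{-3}|\alpha|$ together provide enough margin to sum the dyadic pieces to a total bounded by $\epsilon_1^3 2^{-2p_0m}2^{-10k_+}$.

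The main obstacle I expect is the degenerate quadratic structure at the resonance. A direct computation gives $\partial_\sigma^2\Phi\big|_{(\eta,\sigma)=(\xi,-\xi)} = -\omega''(\xi) - \omega''(-\xi) = 0$, since $\omega''$ is odd, so although $\det\mathrm{Hess}\neq 0$, one of the pure second derivatives vanishes. The localization scales in $\eta$ and $\sigma$ around each resonance are therefore not isotropic, and the stationary phase must be performed after a rotation of coordinates; the dyadic decomposition has to be two-parameter rather than one-parameter, and the matching of the leading stationary phase constant with $\tilde{\tilde{c}}$ relies on the exact value of the rotated Hessian. A secondary bookkeeping-type difficulty lies in the small-$|\xi|$ regime $k\sim -m/100$, where $|\xi|^{\alpha-1}$ is singular for $\alpha<1$ and the factor $|\xi|^{1-\alpha}$ in the correction term degenerates slowly, so every step must carefully balance $|\xi|$ against $s^{-1/2}$. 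The frequency case analysis — effectively enumerating cases by the signs of $k_1,k_2,k_3$ to determine which are near a resonance versus which can be dispatched by integration by parts alone — is tedious but routine once the resonant contribution has been matched with the correction.
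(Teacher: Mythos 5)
Your proposal captures the same space-time resonance strategy the paper uses, and your identification of the three space-time resonances, the Hessian value $-\omega''(\xi)^2 = -\alpha^2(\alpha+1)^2|\xi|^{2\alpha-2}$, and the resulting coefficient $\tilde{\tilde c}$ is exactly the computation that justifies the logarithmic correction. The three-regime plan (stationary phase near resonance, time integration by parts where $|\Phi|$ is large, space integration by parts where $|\nabla_{\eta,\sigma}\Phi|$ is large) is the same skeleton the paper implements.

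Where you diverge is in how the decomposition is organized. You localize by level sets of $|\Phi|$ and $|\nabla\Phi|$; the paper instead never localizes in $\Phi$ directly, but rather by the dyadic frequencies $k_1,k_2,k_3$ relative to $k$ and then by the sign octants $(\iota_1,\iota_2,\iota_3)$ of the input arguments. In the non-resonant regime ($\max_i|k_i-k|\geq 21$), the paper splits by whether $\min k_i$ is above or below $-(1-20p_0)m$: above, it integrates by parts in $\eta$ using $|\partial_\eta\Phi|\gtrsim 2^{\alpha\max(k_1,k_2)}$; below, it splits by signs and derives explicit lower bounds $|\Phi|\gtrsim 2^{-30(\alpha+1)p_0m}$ or $2^{-(\alpha+1)m/100}$ from elementary inequalities like \eqref{key inequality-1}--\eqref{key inequality-2}, then integrates by parts in $s$. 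In the resonant regime ($k_1,k_2,k_3\in[k-20,k+20]$), the four sign octants $(+,+,+),(+,-,-),(-,+,-),(-,-,+)$ have $|\Phi|\gtrsim 2^{(\alpha+1)k}$ and are dispatched by time IBP (Lemma \ref{le:3}); the remaining three octants carrying the true resonance are Lemma \ref{le:2}, handled by the stationary phase computation you describe. The payoff of the paper's organization is that the thresholds you need for time IBP become explicit and tiny (of order $2^{-Cp_0m}$), and one avoids the nontrivial geometric claim that ``$|\Phi|$ and $|\nabla\Phi|$ both small implies proximity to a resonance point'' at a quantitative scale. Your organization is conceptually cleaner but requires that quantitative completeness to be proved, which in the paper is effectively absorbed into the sign case analysis.

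One small misconception: the vanishing of $\partial_\sigma^2\Phi$ at the resonance is not an obstacle requiring a rotation. After centering, the quadratic part of the phase is a hyperbolic form $\alpha(\alpha+1)|\xi|^{\alpha-1}\eta\sigma$, and the paper computes the oscillatory integral $\int e^{-\mathrm{i}xy}\varphi(x/N)\varphi(y/N)\,dx\,dy = 2\pi + O(N^{-1/2})$ directly; the stationary phase constant depends only on $\det\mathrm{Hess}$, which you already correctly computed to be nonzero. The anisotropy of the localization scales you mention (one axis scaling like $|\xi|^{(1-\alpha)/2}s^{-1/2}$) is real and is encoded in the paper's choice $2^{\bar l}\sim 2^{(1-\alpha)k/2}2^{-49m/100}$, so you are right to flag the small-$k$ regime as requiring care, and the assumption $p_0\leq 10^{-3}(1-\alpha)$ together with $k\geq -m/100$ is precisely what closes that margin.
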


\subsubsection{Proof of Proposition \ref{lpr:3}}
To prove \eqref{l24}, we shall split the frequencies in a more careful way, namely splitting the frequencies into a non-resonant  and a resonant regime, in which different techniques will be performed respectively.   

\bigskip
\noindent{\bf{Non-resonant regime: \(\max(|k_1-k|,|k_2-k|,|k_3-k|)\geq 21\).}} 
\begin{lemma}  If
	\begin{equation}\label{l25}
	\begin{aligned}
	\min(k_1,k_2,k_3)\geq -(1-20p_0)m,
	\end{aligned}
	\end{equation}
	then the bound \eqref{l24} holds.
\end{lemma}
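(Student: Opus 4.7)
My strategy combines an algebraic vanishing observation for the three correction terms with iterated integration by parts in the frequency variables $\eta$ and $\sigma$.

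The first step is to observe that all three correction terms in \eqref{l24} vanish identically in this regime. Each such term is a product of three factors of the form $\widehat{f_{k_i}}(\pm\xi)$, each of which is supported in $|k_i-k|\leq 1$. Thus all three factors are simultaneously nonzero only when $|k_i-k|\leq 1$ for $i=1,2,3$, which directly contradicts the hypothesis $\max_i|k_i-k|\geq 21$. Consequently, it suffices to bound
\[
\Bigl| \int_{t_1}^{t_2} e^{\mathrm{i}H(\xi,s)} I_{k_1,k_2,k_3}(\xi,s)\, d s \Bigr| \lesssim \epsilon_1^3\, 2^{-2p_0 m}\, 2^{-10 k_+}.
\]

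The main tool is integration by parts in the frequency variables. A direct computation gives
\[
\partial_\eta \Phi = (\alpha+1)\bigl(|\xi-\eta-\sigma|^\alpha - |\eta|^\alpha\bigr), \qquad \partial_\sigma \Phi = (\alpha+1)\bigl(|\xi-\eta-\sigma|^\alpha - |\sigma|^\alpha\bigr).
\]
Because $\eta+\sigma+(\xi-\eta-\sigma)=\xi$ with $|\xi|\sim 2^k$, the non-resonance hypothesis $\max_i|k_i-k|\geq 21$ forces at least two of the scales $k_1,k_2,k_3$ to lie on distinct dyadic levels. Hence at least one of $|\partial_\eta\Phi|,|\partial_\sigma\Phi|$ is bounded below by $c\,2^{\alpha k_{\max}}$ when $\alpha>0$ and by $c\,2^{\alpha k_{\min}}$ when $\alpha<0$; combined with $\min_i k_i\geq -(1-20p_0)m$ and $k_i\leq p_0 m/10$ from \eqref{l23}, this lower bound degrades the decay by at most a factor $2^{C|\alpha|p_0 m}$ for some absolute constant $C$.

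Writing $e^{-\mathrm{i}s\Phi}=(-\mathrm{i}s\,\partial_j\Phi)^{-1}\partial_j(e^{-\mathrm{i}s\Phi})$ for an appropriate $j\in\{\eta,\sigma\}$, each integration by parts gains a factor $(s|\partial_j\Phi|)^{-1}\lesssim 2^{-(1-C|\alpha|p_0)m}$ at the cost of moving one frequency derivative onto the amplitude. Controlling this derivative via \eqref{l18} (where a derivative on $\widehat{f_{k_i}}$ costs $\|\partial\widehat{f_{k_i}}\|_{L^2}\lesssim\epsilon_1 2^{p_0m}\max(2^{-k_i/2},1)$ and derivatives of the dyadic cutoffs cost at most $2^{-k_i}\leq 2^{(1-20p_0)m}$), one checks that the net gain per integration by parts is $2^{-cp_0m}$ for some small $c>0$, using the hypothesis that $p_0$ is taken small relative to $|\alpha|$. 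Iterating a fixed but sufficiently large number of times then produces decay $2^{-Nm}$ for any prescribed $N$, which absorbs the target $2^{-2p_0m}2^{-10k_+}$ with room to spare after the final $s$-integration over an interval of length $\lesssim 2^m$.

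The main obstacle I anticipate is the subcase where one of $\partial_\eta\Phi,\partial_\sigma\Phi$ is degenerate along a large subset of the integration domain --- for instance when $k_2=k_3$ so that $\partial_\sigma\Phi$ vanishes on the diagonal $|\sigma|=|\xi-\eta-\sigma|$. The remedy is to perform all integrations by parts in the remaining variable. In every such configuration the constraint $\min_ik_i+\mathrm{med}_ik_i\geq -(1+10p_0)m$ from \eqref{l23} ensures that the non-degenerate gradient is quantitatively bounded below, so the iteration closes uniformly across all frequency configurations.
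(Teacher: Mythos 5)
Your opening observation that the three correction terms vanish in this regime because $\widehat{f_{k_i}}(\pm\xi)$ requires $|k_i-k|\leq 1$, contradicting $\max_i|k_i-k|\geq 21$, is correct and is what the paper implicitly uses. The core of your proposal, however, contains a quantitative gap that prevents it from closing. You claim each integration by parts gains $(s|\partial_j\Phi|)^{-1}\lesssim 2^{-(1-C|\alpha|p_0)m}$, but this is not the right bound on $|\partial_j\Phi|^{-1}$. For $\alpha\in(0,1)$ the lower bound on the derivative of the phase in the direction you integrate by parts is of the form $|\partial_\eta\Phi|\gtrsim 2^{\alpha\max(k_1,k_2)}\gtrsim 2^{\alpha k}$, and under \eqref{l23} the output frequency $k$ ranges down to $-m/100$, so the penalty factor is $2^{-\alpha k}\lesssim 2^{\alpha m/100}$, not $2^{C|\alpha|p_0 m}$. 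Since $p_0\leq 10^{-3}(1-\alpha)<10^{-2}$, the actual degradation $2^{\alpha m/100}$ is much larger than the $2^{C|\alpha|p_0 m}$ you allow for. As a consequence, a single iteration that puts a derivative on the dyadic cutoffs (costing $2^{-\min(k_1,k_2)}\lesssim 2^{(1-20p_0)m}$) has net factor $2^{-m}2^{\alpha m/100}2^{(1-20p_0)m}=2^{(\alpha/100-20p_0)m}$, which is a \emph{loss} for $\alpha$ close to $1$ (take $\alpha=0.9$, $p_0=10^{-4}$: $\alpha/100-20p_0=0.007>0$), so the iteration does not converge in that range, let alone produce $2^{-Nm}$ for arbitrary $N$. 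Moreover, you only have one derivative available on each $\widehat{f_{k_i}}$ via the $H^{1,1}$ control in \eqref{l18}, so the ``fixed but sufficiently large'' number of iterations is not actually available even when the gain were positive.

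The ingredient you are missing, and which makes the paper's argument close with a \emph{single} integration by parts, is the physical-space dispersive bound $\|e^{s|D|^\alpha\partial_x}f_l(s)\|_{L^\infty}\lesssim\epsilon_1 2^{-m/2}$ in \eqref{l18}. When the trilinear estimate \eqref{ap2} is applied with $(p,q,r)=(2,2,\infty)$, the $L^\infty$ factor is taken in \emph{physical} space and contributes $2^{-m/2}$ for free, on top of the $2^{-m}$ from one integration by parts. This yields a total decay $\sim 2^{-3m/2}$, which comfortably absorbs the losses $2^{(1-\alpha)k}\max(2^{-\min(k_1,k_2)/2},1)2^{O(p_0)m}\lesssim 2^{m/2+O(p_0)m}$ coming from $|\xi|$, the cutoff supports, and \eqref{l25}, leaving $2^{-m}2^{-O(p_0)m}$ as required (before $s$-integration over an interval of length $\lesssim 2^m$). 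Without that $2^{-m/2}$ of physical-space decay, the single-IBP argument falls short and the iterated-IBP scheme does not recover it.
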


\begin{proof}  We are done if we show
	\begin{equation*}
	\begin{aligned}
	| I_{k_1,k_2,k_3}(\xi,s)|
	\lesssim \epsilon_1^32^{-m}2^{-3p_0m}2^{-10k_+}.
	\end{aligned}
	\end{equation*}
	\noindent{\bf{{Case 1:} \(\max(|k_1-k_2|,|k_1-k_3|,|k_2-k_3|)\geq 5\).}}
	By symmetry, we may assume that \(|k_1-k_2|\geq5\) and \(\max(k_1,k_2)\geq k-20\). 	
	We first consider the case of \(\alpha\in(0,1)\). 
	One integrates by parts in \(\eta\) to find that
	\begin{align}\label{l26}
	|I_{k_1,k_2,k_3}(\xi,s)|\leq |F_1(\xi,s)|+|F_2(\xi,s)|+|F_3(\xi,s)|,
	\end{align}
	where each term on the right hand side is given by
	\begin{equation}\label{l27}
	\begin{aligned}
	F_1(\xi,s)&=\xi\int_{\mathbb{R}^2}m_1(\eta,\sigma)e^{-\mathrm{i}s\Phi(\xi,\eta,\sigma)}\partial_\eta\widehat{f_{k_1}}(\xi-\eta-\sigma,s)\widehat{f_{k_2}}(\eta,s)\widehat{f_{k_3}}(\sigma,s) \, d \eta d \sigma,\\
	F_2(\xi,s)&=\xi\int_{\mathbb{R}^2}m_1(\eta,\sigma)e^{-\mathrm{i}s\Phi(\xi,\eta,\sigma)}\widehat{f_{k_1}}(\xi-\eta-\sigma,s)\partial_\eta\widehat{f_{k_2}}(\eta,s)\widehat{f_{k_3}}(\sigma,s) \, d \eta d \sigma,\\
	F_3(\xi,s)&=\xi\int_{\mathbb{R}^2}\partial_\eta m_1(\eta,\sigma)e^{-\mathrm{i}s\Phi(\xi,\eta,\sigma)}\widehat{f_{k_1}}(\xi-\eta-\sigma,s)\widehat{f_{k_2}}(\eta,s)\widehat{f_{k_3}}(\sigma,s)\, d \eta d \sigma,
	\end{aligned}
	\end{equation}
	in which the multiplier takes the form
	\begin{align*}
	m_1(\eta,\sigma):=\frac{1}{s\partial_\eta\Phi(\xi,\eta,\sigma)}\varphi_{k_1}^\prime(\xi-\eta-\sigma)\varphi_{k_2}^\prime(\eta)\varphi_{k_3}^\prime(\sigma).
	\end{align*}
	
	In view of the support properties of the integral, we observe that
	\begin{equation*}
	\begin{aligned}
	|\partial_\eta\Phi(\xi,\eta,\sigma)|
	=(\alpha+1)\big||\xi-\eta-\sigma|^{\alpha}-|\eta|^{\alpha}\big|
	\gtrsim 2^{\alpha\max(k_1,k_2)}\gtrsim 2^{\alpha k},
	\end{aligned}
	\end{equation*}
	and hence have 
	\begin{equation}\label{l28}
	\begin{aligned}
	\|\mathcal{F}^{-1}(m_1)\|_{L^1}\lesssim 2^{-m}2^{-\alpha k},
	\end{aligned}
	\end{equation}
	and
	\begin{equation}\label{l29}
	\begin{aligned}
	\|\mathcal{F}^{-1}(\partial_\eta m_1)\|_{L^1}\lesssim 2^{-m}2^{-\alpha k}2^{-\min(k_1,k_2)}.
	\end{aligned}
	\end{equation}	
	Fixing \(\xi\) and \(s\), and letting
	\begin{equation}\label{l30}
	\begin{aligned}
	&\widehat{g_1}(\theta):=e^{\mathrm{i}s|\xi-\theta|^\alpha(\xi-\theta)}\partial_\eta\widehat{f_{k_1}}(\xi-\theta,s),\\
	&\widehat{g_2}(\eta):=e^{\mathrm{i}s|\eta|^\alpha\eta}\widehat{f_{k_2}}(\eta,s),\\
	&\widehat{g_3}(\sigma):=e^{\mathrm{i}s|\sigma|^\alpha\sigma}\widehat{f_{k_3}}(\sigma,s).
	\end{aligned}
	\end{equation}
	one uses \eqref{l18} to get
	\begin{equation*}
	\begin{aligned}
	\|g_1\|_{L^2}\lesssim \epsilon_12^{p_0m}{\max(2^{-k_1/2},1)}
	,\ \|g_2\|_{L^2}
	\lesssim \epsilon_12^{p_0m}2^{-N_0{k_2}_+},\
	\|g_3\|_{L^\infty}\lesssim \epsilon_12^{-m/2}.
	\end{aligned}
	\end{equation*}
	This together with \eqref{l28} and \eqref{ap2} implies  
	\begin{align}\label{estimate:F_1}
	|F_1(\xi,s)|
	\lesssim\epsilon_1^32^{(1-\alpha)k}\max(2^{-k_1/2},1)2^{(2p_0-3/2)m}2^{-N_0\max({k_2}_+,{k_3}_+)}.
	\end{align}
	Analogously it holds that
	\begin{align}\label{estimate:F_2}
	|F_2(\xi,s)|\lesssim\epsilon_1^32^{(1-\alpha)k}\max(2^{-k_2/2},1)2^{(2p_0-3/2)m}2^{-N_0\max({k_1}_+,{k_3}_+)}.
	\end{align}	
	For fixed \(\xi\) and \(s\), we let
	\begin{equation}\label{l31}
	\begin{aligned}
	&\widehat{g_1}(\theta):=e^{\mathrm{i}s|\xi-\theta|^\alpha(\xi-\theta)}
	\widehat{f_{k_1}}(\xi-\theta,s),\\
	&\widehat{g_2}(\eta):=e^{\mathrm{i}s|\eta|^\alpha\eta}\widehat{f_{k_2}}(\eta,s),\\
	&\widehat{g_3}(\sigma):=e^{\mathrm{i}s|\sigma|^\alpha\sigma}\widehat{f_{k_3}}(\sigma,s),
	\end{aligned}
	\end{equation}
	and use \eqref{l18} to estimate
	\begin{equation*}
	\begin{aligned}
	\|g_1\|_{L^2}\lesssim \epsilon_12^{p_0m}2^{k_1/2}
	,\ 
	\|g_2\|_{L^\infty}\lesssim \epsilon_12^{-m/2},\
	\|g_3\|_{L^2}
	\lesssim \epsilon_12^{p_0m}2^{-N_0{k_3}_+},
	\end{aligned}
	\end{equation*}
	which combines \eqref{l29} and \eqref{ap2} lead to 
	\begin{align}\label{estimate:F_3}
	|F_3(\xi,s)|
	\lesssim \epsilon_1^32^{(1-\alpha)k}2^{-\min(k_1,k_2)/2}2^{(2p_0-3/2)m}2^{-N_0{k_3}_+}.
	\end{align}
	It follows from \eqref{estimate:F_1}, \eqref{estimate:F_2} and \eqref{estimate:F_3} that
	\begin{equation*}
	\begin{aligned}
	&\quad|F_1(\xi,s)|+|F_2(\xi,s)|+|F_3(\xi,s)|\\
	&\lesssim \epsilon_1^32^{(1-\alpha)k}\max(2^{-\min(k_1,k_2)/2},1)2^{(2p_0-3/2)m}2^{-10k_+}
	(2^{10\max({k_1}_+,{k_2}_+,{k_3}_+)}+1)\\
	&\lesssim\epsilon_1^32^{-m}2^{-5p_0m}2^{-10k_+},
	\end{aligned}
	\end{equation*}
	where in the last line we used \eqref{l23} and \eqref{l25}.
	
	We next consider the case of \(\alpha\in(-1,0)\). On the support of the integral, one instead has 
	\begin{equation*}
	\begin{aligned}
	|\partial_\eta\Phi(\xi,\eta,\sigma)|
	\gtrsim 2^{\alpha\min(k_1,k_2)}.
	\end{aligned}
	\end{equation*}
	Hence it holds that
	\begin{equation*}
	\begin{aligned}
	\|\mathcal{F}^{-1}(m_5)\|_{L^1}\lesssim 2^{-m}2^{-\alpha\min(k_1,k_2)},
	\end{aligned}
	\end{equation*}
	and
	\begin{equation*}
	\begin{aligned}
	\|\mathcal{F}^{-1}(\partial_\eta m_5)\|_{L^1}\lesssim 2^{-m}2^{-\alpha \min(k_1,k_2)}2^{-\min(k_1,k_2)}.
	\end{aligned}
	\end{equation*}
	In virtue of \(\alpha\in(-1,0)\) and \eqref{l23}, the term \(2^{-\alpha\min(k_1,k_2)}\) is not so harmful, which contributes an acceptable bound \(2^{p_0m/10}\). Then one may repeat the argument for the case of \(\alpha\in(0,1)\) to obtain the desired result.  	
	
	\bigskip
	\noindent{\bf{Case 2: \(\max(|k_1-k_2|,|k_1-k_3|,|k_2-k_3|)\leq 4\).}}
	In this case  \(\partial_\eta\Phi\neq0\) or \(\partial_\sigma\Phi\neq0\), and we will assume \(\partial_\eta\Phi\neq0\) without loss of generality and then perform the same operation as
	\eqref{l26}-\eqref{l27}. 
	Recalling \(2^{k_1}\approx 2^{k_2}\approx 2^{k_3}\), on the support of the integral,  we have
	\begin{equation*}
	\begin{aligned}
	|\partial_\eta\Phi(\xi,\eta,\sigma)|
	=(\alpha+1)\big||\xi-\eta-\sigma|^{\alpha}-|\eta|^{\alpha}\big|
	\gtrsim 2^{\alpha k_2}.
	\end{aligned}
	\end{equation*}
	The bound yields
	\begin{equation}\label{l32}
	\begin{aligned}
	\|\mathcal{F}^{-1}(m_1)\|_{L^1}\lesssim 2^{-m}2^{-\alpha k_2},
	\end{aligned}
	\end{equation}
	and
	\begin{equation}\label{l33}
	\begin{aligned}
	\|\mathcal{F}^{-1}(\partial_\eta m_1)\|_{L^1}\lesssim 2^{-m}2^{-(\alpha+1)k_2}.
	\end{aligned}
	\end{equation}
	Define 	
	\(g_1,g_2\) and \(g_3\) as \eqref{l30},
	it then follows from \eqref{l18} that
	\begin{equation*}
	\begin{aligned}
	\|g_1\|_{L^2}\lesssim \epsilon_12^{p_0m}2^{-k_1}
	,\ \|g_2\|_{L^2}
	\lesssim \epsilon_12^{p_0m}2^{-N_0{k_2}_+},\
	\|g_3\|_{L^\infty}\lesssim \epsilon_12^{-m/2},
	\end{aligned}
	\end{equation*}
	which uses \eqref{l32} by applying \eqref{ap2} to give
	\begin{align}\label{estimate:F_1-2}
	|F_1(\xi,s)|
	\lesssim\epsilon_1^32^k2^{-(\alpha+1)k_2}2^{(2p_0-3/2)m}2^{-N_0{k_2}_+},
	\end{align}
	where we used \(2^{k_1}\approx 2^{k_2}\approx 2^{k_3}\).
	In a similar manner one may obtain
	\begin{align}\label{estimate:F_2-2}
	|F_2(\xi,s)|\lesssim\epsilon_1^32^k2^{-(\alpha+1)k_2}2^{(2p_0-3/2)m}2^{-N_0{k_2}_+}.
	\end{align}
	Recalling the definition of \(g_1,g_2\) and \(g_3\) in \eqref{l31},
	one invokes \eqref{l18} to see
	\begin{equation*}
	\begin{aligned}
	\|g_1\|_{L^2}\lesssim \epsilon_12^{p_0m}
	,\ \|g_2\|_{L^2}
	\lesssim \epsilon_12^{p_0m}2^{-N_0{k_2}_+},\
	\|g_3\|_{L^\infty}\lesssim \epsilon_12^{-m/2},
	\end{aligned}
	\end{equation*}
	which combining  \eqref{l33} and applying \eqref{ap2}  yield 
	\begin{align}\label{estimate:F_3-2}
	|F_3(\xi,s)|
	\lesssim \epsilon_1^32^k2^{-(\alpha+1)k_2}2^{(2p_0-3/2)m}2^{-N_0{k_2}_+}.
	\end{align}
	We conclude \eqref{estimate:F_1-2}-\eqref{estimate:F_3-2} that
	\begin{equation*}
	\begin{aligned}
	|F_1(\xi,s)|+|F_2(\xi,s)|+|F_3(\xi,s)|
	&\lesssim\epsilon_1^32^k2^{-(\alpha+1)k_2}2^{(2p_0-3/2)m}2^{-N_0{k_2}_+}\\
	&\lesssim \epsilon_1^32^{-m}2^{-5p_0m}2^{-10k_+},
	\end{aligned}
	\end{equation*}
	in which \(-(\alpha+1)k_2\leq -(\alpha+1)k+14(\alpha+1)\) and \eqref{l23} were used.

\end{proof}

It remains to consider the case  \(\min(k_1,k_2,k_3)\leq -(1-20p_0)m\). This case is more delicate and necessitates  a more careful analysis of the resonances. To this end,
without loss of generality, we may assume that \(\xi>0\) and \(\xi\in[2^k,2^{k+1}]\), and break up the integral \(I_{k_1,k_2,k_3}\) as follows:
\begin{align*}
I_{k_1,k_2,k_3}(\xi,s)=\sum_{\iota_1,\iota_2,\iota_3\in\{+,-\}}I_{k_1,k_2,k_3}^{\iota_1,\iota_2,\iota_3}(\xi,s),
\end{align*}
where each component is defined by
\begin{align*}
I_{k_1,k_2,k_3}^{\iota_1,\iota_2,\iota_3}(\xi,s)=\xi\int_{\mathbb{R}^2}e^{-\mathrm{i}t\Phi(\xi,\eta,\sigma)}\widehat{f_{k_1}^{\iota_1}}(\xi-\eta-\sigma,s)\widehat{f_{k_2}^{\iota_2}}(\eta,s)\widehat{f_{k_3}^{\iota_3}}(\sigma,s)\, d\eta d\sigma,
\end{align*}
with the notation \(\widehat{f_{l}^{\iota}}(\mu):=\widehat{f_{l}}(\mu)1_{\iota}(\mu),1_+:=1_{[0,\infty)},1_-:=1_{(-\infty,0]}\). First observing that \(I_{k_1,k_2,k_3}^{-,-,-}(\xi,s)=0\), we are reduced  to consider the other seven cases:\\
\[\{(+,+,+),(+,-,-),(-,+,-),(-,-,+),(+,+,-),(+,-,+),(-,+,+)\}.\] 

\begin{lemma} If
	\begin{equation}\label{l34}
	\begin{aligned}
	\min(k_1,k_2,k_3)\leq -(1-20p_0)m,
	\end{aligned}
	\end{equation}
	then the bound \eqref{l24} holds.
\end{lemma}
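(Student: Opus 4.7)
The strategy rests on the observation that in this very-small-frequency regime, each of the three subtracted ``resonant'' terms in \eqref{l24} vanishes identically, so it suffices to bound $I_{k_1,k_2,k_3}(\xi,s)$ directly. By symmetry assume $k_3 = \min(k_1,k_2,k_3) \leq -(1-20p_0)m$; since $|\xi|\sim 2^k \geq 2^{-m/100}$ while $\widehat{f_{k_3}}$ is supported in $\{|\sigma|\sim 2^{k_3}\}$, each factor $\widehat{f_{k_3}}(\pm\xi)$ appearing in the subtracted terms is zero. I therefore aim to prove the pointwise bound
\[
|I_{k_1,k_2,k_3}(\xi,s)| \lesssim \epsilon_1^3 2^{-(1+3p_0)m} 2^{-10k_+}
\]
uniformly for $s\in[2^m-1, 2^{m+1}]$, which yields \eqref{l24} after integrating in time.

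The frequency constraint $\mu+\eta+\sigma = \xi$ with $|\sigma|\sim 2^{k_3}\ll 2^k$ forces $\max(k_1,k_2)\geq k-O(1)$, and since $k\geq -m/100$ no more than one of $k_1,k_2$ can be as small as $k_3$. The plan is to split into two subcases: (i) both $k_1, k_2$ are comparable to $k$; (ii) exactly one of them is comparable to $k$ and the other also satisfies the smallness bound $\leq -(1-20p_0)m+O(1)$. Subcase (ii) is handled directly by the trilinear convolution estimate
\[
|I^{\iota_1,\iota_2,\iota_3}_{k_1,k_2,k_3}(\xi,s)| \lesssim |\xi|\,\|\widehat{f_{k_1}}\|_{L^\infty}\|\widehat{f_{k_2}}\|_{L^1}\|\widehat{f_{k_3}}\|_{L^1},
\]
since $\|\widehat{f_{k_j}}\|_{L^1} \lesssim \epsilon_1 2^{p_0 m + k_j}$ for $j=2,3$ by the $H^{1,1}$ bound in \eqref{l18}, and $k_2+k_3\leq -2(1-20p_0)m$ provides margin for the required gain. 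Subcase (i) requires a single integration by parts in whichever of $\eta$, $\sigma$, or $s$ gives a nondegenerate phase (depending on the sign pattern $(\iota_1,\iota_2,\iota_3)$), following the template of \eqref{l26}--\eqref{l33}; the smallness of $\|\widehat{f_{k_3}}\|_{L^\infty}\lesssim \epsilon_1 2^{(1-\alpha)k_3/4}$ from the $Z$-norm and of $\|\widehat{f_{k_3}}\|_{L^1}$ then absorbs the multiplier losses arising from the phase lower bound.

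The main obstacle will be verifying nondegeneracy of the phase across all seven sign patterns simultaneously for both signs of $\alpha$. Specifically, when $\iota_1 = \iota_2$ and the two large frequencies cluster near $\xi/2$ (a configuration consistent with $\sigma$ near zero), both $\partial_\eta\Phi$ and $\partial_\sigma\Phi$ can degenerate, forcing reliance on a time integration by parts using a lower bound of the form $|\Phi| \gtrsim |\xi|^{\alpha+1}$ in that regime, or on a Taylor expansion of $\Phi$ in the small variable $\sigma$ to extract the leading oscillation. The restrictions $p_0 \leq 10^{-3}(1-\alpha)$ (for $0<\alpha<1$) and $p_0 \leq -10^{-3}\alpha$ (for $-1<\alpha<0$) enter precisely to ensure that polynomial losses in $2^{p_0 m}$ arising from the $H^{1,1}$ bounds, together with the factor $2^{-\alpha k_3}$ coming from the multiplier when $\alpha<0$, are dominated by the decisive smallness gain $2^{k_3} \leq 2^{-(1-20p_0)m}$.
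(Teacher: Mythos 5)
Your initial observation is sound: when $\min(k_1,k_2,k_3)\leq -(1-20p_0)m$ and $k\geq -m/100$, each subtracted ``resonant'' term in \eqref{l24} vanishes by support considerations, so it suffices to bound the contribution of $I_{k_1,k_2,k_3}$ alone. This matches the paper's implicit reduction. However, the main analytical content of this lemma is precisely the phase-lower-bound analysis that you label ``the main obstacle'' and only sketch, and the sketch you give has two concrete problems.

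First, your two subcases are not exhaustive, and one is vacuous. Given the standing hypothesis $\min(k_1,k_2,k_3)+\mathrm{med}(k_1,k_2,k_3)\geq -(1+10p_0)m$ from \eqref{l23}, if $\min\leq -(1-20p_0)m$ then necessarily $\mathrm{med}\geq -30p_0m$. Your subcase (ii), in which a second index also sits near $-(1-20p_0)m$, therefore never occurs (for $p_0\leq 10^{-3}$). Meanwhile your subcase (i) requires both of $k_1,k_2$ to be comparable to $k$, which omits the whole intermediate range $\mathrm{med}(k_1,k_2,k_3)\in[-30p_0m,\,k-O(1)]$ that the constraint actually permits. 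It is exactly in that intermediate range that the phase is smallest and the estimate is tightest.

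Second, the lower bound $|\Phi|\gtrsim|\xi|^{\alpha+1}$ you propose as the fallback is false in general. The correct bound, which the paper obtains using \eqref{key inequality-1}--\eqref{key inequality-2} and a sign-pattern case analysis, is $|\Phi|\gtrsim 2^{(\alpha+1)\mathrm{med}(k_1,k_2,k_3)}$, with further refinements for the mixed-sign patterns (see \eqref{l39}--\eqref{l41}); in the intermediate range this is only $\gtrsim 2^{-30(\alpha+1)p_0m}$, far smaller than $|\xi|^{\alpha+1}\sim 2^{(\alpha+1)k}$. Using your bound would massively overstate the gain from the time integration by parts. Relatedly, spatial integration by parts in the tiny variable $\sigma$ is of no use here: over the support $|\sigma|\sim 2^{k_3}\lesssim 2^{-(1-20p_0)m}$ the phase accumulates only $\sim s\,|\partial_\sigma\Phi|\,2^{k_3}\lesssim 2^{20p_0m}$, essentially no oscillation, and differentiating $\widehat{f_{k_3}}$ or the cutoff $\varphi'_{k_3}$ costs an uncontrollable factor $2^{-k_3}$. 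Integration by parts in $\eta$ is also blocked when the two large frequencies cluster near $\xi/2$. The paper's route is therefore to use time integration by parts exclusively in this regime (the template \eqref{l36}--\eqref{l37}), supported by a per-sign-pattern lower bound on $\Phi$ itself, and it is that case analysis --- absent from your proposal --- that carries the lemma.
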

\begin{proof} It suffices to show
	\begin{align*}
	\bigg|\int_{t_1}^{t_2}e^{\mathrm{i}H(\xi,s)}I_{k_1,k_2,k_3}^{\iota_1,\iota_2,\iota_3}(\xi,s)\, d s\bigg|
	\lesssim \varepsilon_1^32^{-2p_0m}2^{-10k_+},
	\end{align*}
	and we need to consider several cases.
	
	\bigskip
	\noindent{\bf{Case 1: \((\iota_1,\iota_2,\iota_3)= (+,+,+)\).}} In this case,  
	one calculates that
	\begin{equation}\label{l35}
	\begin{aligned}
	\Phi(\xi,\eta,\sigma)&=\xi^{\alpha+1}-(\xi-\eta-\sigma)^{\alpha+1}-\eta^{\alpha+1}-\sigma^{\alpha+1}\\
	&\gtrsim 2^{(\alpha+1)\mathrm{med}(k_1,k_2,k_3)}
	\gtrsim 2^{-30(\alpha+1)p_0m},
	\end{aligned}
	\end{equation}
	where in the second inequality we used \eqref{l23} and \eqref{l34}, and in the first inequality we used the inequalities 
	\begin{align}\label{key inequality-1}
	a^{\alpha+1}+b^{\alpha+1}+c^{\alpha+1}-(a+b+c)^{\alpha+1}\gtrsim ba^\alpha,\quad \text{if}\ \alpha\in(0,1),
	\end{align}
	and
	\begin{align}\label{key inequality-2}
	a^{\alpha+1}+b^{\alpha+1}+c^{\alpha+1}-(a+b+c)^{\alpha+1}\gtrsim a^{\alpha+1},\quad \text{if}\ \alpha\in(-1,0),
	\end{align}
	for \(a\geq b\geq c \in(0,\infty)\).

	We integrate by parts in \(s\) to find that
	\begin{equation}\label{l36}
	\begin{aligned}
	&\quad\bigg|\xi\int_{t_1}^{t_2}\int_{\mathbb{R}^2}e^{\mathrm{i}H(\xi,s)}e^{-\mathrm{i}s\Phi(\xi,\eta,\sigma)}\widehat{f_{k_1}^{\iota_1}}(\xi-\eta-\sigma,s)\widehat{f_{k_2}^{\iota_2}}(\eta,s)\widehat{f_{k_3}^{\iota_3}}(\sigma,s)
	\, d \eta d \sigma d s\bigg|\\
	&\lesssim  
	\sum_{j=1}^2\bigg|\xi\int_{\mathbb{R}^2}\frac{e^{\mathrm{i}H(\xi,t_j)}e^{-\mathrm{i}t_j\Phi(\xi,\eta,\sigma)}}{\Phi(\xi,\eta,\sigma)}\widehat{f_{k_1}
		^{\iota_1}}(\xi-\eta-\sigma,t_j)\widehat{f_{k_2}^{\iota_2}}(\eta,t_j)\widehat{f_{k_3}^{\iota_3}}(\sigma,t_j)\, d \eta d \sigma\bigg|\\
	&\quad+\int_{t_1}^{t_2}\bigg|\xi\int_{\mathbb{R}^2}\frac{e^{-\mathrm{i}s\Phi(\xi,\eta,\sigma)}}{\Phi(\xi,\eta,\sigma)}\frac{d}{d s}\bigg[e^{\mathrm{i}H(\xi,s)}\widehat{f_{k_1}^{\iota_1}}(\xi-\eta-\sigma,s)\widehat{f_{k_2}^{\iota_2}}(\eta,s)\widehat{f_{k_3}^{\iota_3}}(\sigma,s)\bigg]\, d \eta d \sigma \bigg| d s\\
	&=\colon \sum_{j=1}^2A_j(\xi,t_j)+B(\xi).
	\end{aligned}
	\end{equation}
	To estimate \(A_j(\xi,t_j),\ j=1,2\), we define
	\begin{equation*}
	\begin{aligned}
	m_2(\eta,\sigma):=\frac{1}{\Phi(\xi,\eta,\sigma)}\varphi_{k_1}^\prime(\xi-\eta-\sigma)\varphi_{k_2}^\prime(\eta)\varphi_{k_3}^\prime(\sigma),
	\end{aligned}
	\end{equation*}
	and thus use \eqref{l35} to deduce
	\begin{equation*}
	\begin{aligned}
	\|\mathcal{F}^{-1}(m_2)\|_{L^1}\lesssim 2^{30(\alpha+1)p_0m}.
	\end{aligned}
	\end{equation*}
	This estimate together with the inequality \eqref{ap2} allows  to bound
	\begin{equation*}
	\begin{aligned}
	A_j(\xi,t_j)
	&\lesssim \epsilon_1^32^k2^{30(\alpha+1)p_0m}2^{(2p_0-1/2)m}2^{-N_0({k_1}_+,{k_2}_+,{k_3}_+)}\\
	&\lesssim \epsilon_1^32^{-m/4}2^{-10k_+},\quad \text{for}\ j=1,2,
	\end{aligned}
	\end{equation*}	
	
	We now turn to estimating the term \(B(\xi)\). For this, we expand \(d /d s\) to see that		
	\begin{equation*}
	\begin{aligned}
	B(\xi)\lesssim 2^m\sup_{s\in[t_1,t_2]}\big[B_1(\xi,s)+B_2(\xi,s)+B_3(\xi,s)+B_4(\xi,s)\big],
	\end{aligned}
	\end{equation*}
	where each term on the right hand side is given by
	\begin{equation}\label{l37}
	\begin{aligned}
	B_1(\xi,s)&= \bigg|\xi\int_{\mathbb{R}^2}m_2(\eta,\sigma)e^{-\mathrm{i}s\Phi(\xi,\eta,\sigma)}\partial_s\widehat{f_{k_1}^{\iota_1}}(\xi-\eta-\sigma,s)\widehat{f_{k_2}^{\iota_2}}(\eta,s)\widehat{f_{k_3}^{\iota_3}}(\sigma,s)\,
	d \eta d \sigma\bigg|,\\
	B_2(\xi,s)&= \bigg|\xi\int_{\mathbb{R}^2}m_2(\eta,\sigma)e^{-\mathrm{i}s\Phi(\xi,\eta,\sigma)}\widehat{f_{k_1}^{\iota_1}}(\xi-\eta-\sigma,s)\partial_s\widehat{f_{k_2}^{\iota_2}}(\eta,s)\widehat{f_{k_3}^{\iota_3}}(\sigma,s)\, d \eta d \sigma\bigg|,\\
	B_3(\xi,s)&= \bigg|\xi\int_{\mathbb{R}^2}m_2(\eta,\sigma)e^{-\mathrm{i}s\Phi(\xi,\eta,\sigma)}\widehat{f_{k_1}^{\iota_1}}(\xi-\eta-\sigma,s)\widehat{f_{k_2}^{\iota_2}}(\eta,s)\partial_s\widehat{f_{k_3}^{\iota_3}}(\sigma,s)\, d \eta d \sigma\bigg|,\\
	B_4(\xi,s)&=\bigg|\xi\int_{\mathbb{R}^2}m_3(\eta,\sigma)e^{-\mathrm{i}s\Phi(\xi,\eta,\sigma)}\widehat{f_{k_1}^{\iota_1}}(\xi-\eta-\sigma,s)\widehat{f_{k_2}^{\iota_2}}(\eta,s)\widehat{f_{k_3}^{\iota_3}}(\sigma,s)\,
	d \eta d \sigma\bigg|,
	\end{aligned}
	\end{equation}
	with the multiplier
	\begin{equation*}
	\begin{aligned}
	m_3(\eta,\sigma):=\partial_s H(\xi,s)m_2(\eta,\sigma).
	\end{aligned}
	\end{equation*}
	Recalling the definition \eqref{l11}, one may estimate 	
	\begin{equation*}
	\begin{aligned}
	|\partial_s H(\xi,s)|
	\lesssim \epsilon_1^22^{(2-\alpha)k}2^{-m}2^{2k_{\pm}},
	\end{aligned}
	\end{equation*}
	and then combine with \eqref{l35} to deduce
	\begin{equation*}
	\begin{aligned}
	\|\mathcal{F}^{-1}(m_3)\|_{L^1}\lesssim \epsilon_1^22^{(2-\alpha)k}2^{30(\alpha+1)p_0m}2^{-m}2^{2k_{\pm}}.
	\end{aligned}
	\end{equation*}
	Using this resulting inequality and	applying \eqref{ap2}, we can obtain
	\begin{equation*}
	\begin{aligned}
	\sup_{s\in[t_1,t_2]}B_4(\xi,s)
	&\lesssim \epsilon_1^52^{(3-\alpha)k}2^{30(\alpha+1)p_0m}2^{(2p_0-3/2)m}2^{-N_0({k_1}_+,{k_2}_+,{k_3}_+)}
	2^{2k_{\pm}}\\
	&\lesssim \epsilon_1^52^{-m}2^{-m/4}2^{-10k_+}.
	\end{aligned}
	\end{equation*}		
	The terms \(B_j(\xi,s),\ j=1,2,3\) are the same type and can be handled analogously. To this end, we would like to estimate \(\partial_s\widehat{f_l}(s)\) in \(L^2\)-norm. Indeed, in view of \eqref{l10} and \eqref{l18}, it is straightforward to see that 
	\begin{align}\label{l38}
	\|\partial_s\widehat{f_l}(s)\|_{L^2}\lesssim \epsilon_1^32^{-m}2^{3p_0m}2^{-10l_+}.
	\end{align}
	Again fix \(\xi\) and \(s\), and let
	\begin{equation*}
	\begin{aligned}
	&\widehat{g_1}(\theta):=e^{\mathrm{i}s|\xi-\theta|^\alpha(\xi-\theta)}\partial_s\widehat{f_{k_1}^{\iota_1}}(\xi-\theta,s),\\
	&\widehat{g_2}(\eta):=e^{\mathrm{i}s|\eta|^\alpha\eta}\widehat{f_{k_2}^{\iota_2}}(\eta,s),\\
	&\widehat{g_3}(\sigma):=e^{\mathrm{i}s|\sigma|^\alpha\sigma}\widehat{f_{k_3}^{\iota_3}}(\sigma,s),
	\end{aligned}
	\end{equation*}
	we then use \eqref{l18} and \eqref{l38} to get
	\begin{equation*}
	\begin{aligned}
	\|g_1\|_{L^2}\lesssim \epsilon_12^{-m}2^{3p_0m}2^{-10{k_1}_{+}},\ \|g_2\|_{L^2}
	\lesssim \epsilon_12^{p_0m}2^{-N_0{k_2}_+},\
	\|g_3\|_{L^\infty}\lesssim \epsilon_12^{-m/2}.
	\end{aligned}
	\end{equation*}
	Hence it follows from \eqref{ap2} that	
	\begin{equation*}
	\begin{aligned}
	\sup_{s\in[t_1,t_2]}B_1(\xi,s)
	&\lesssim \epsilon_1^32^k2^{30(\alpha+1)p_0m}2^{(4p_0-3/2)m}2^{-10{k_1}_{+}}2^{-N_0({k_2}_+,{k_3}_+)}\\
	&\lesssim \epsilon_1^32^{-m}2^{-m/4}2^{-10k_+},
	\end{aligned}
	\end{equation*}
	Proceeding  as above,  one may estimate
	\begin{equation*}
	\begin{aligned}
	\sup_{s\in[t_1,t_2]}\big[B_2(\xi,s)+B_3(\xi,s)\big]
	\lesssim \epsilon_1^32^{-m}2^{-m/4}2^{-10k_+}.
	\end{aligned}
	\end{equation*}

	\bigskip
	\noindent{\bf{Case 2: \((\iota_1,\iota_2,\iota_3)\in \{(+,-,-),(-,+,-),(-,-,+)\}\).}}
	These three cases can be handled in a similar manner, thus we only focus on \((\iota_1,\iota_2,\iota_3)= (+,-,-)\).  
	In this case, we have
	\begin{equation*}
	\begin{aligned}
	-\Phi(\xi,\eta,\sigma)
	&=(\xi-\eta-\sigma)^{\alpha+1}-\xi^{\alpha+1}-(-\eta)^{\alpha+1}-(-\sigma)^{\alpha+1}\\
	&\gtrsim 2^{(\alpha+1)\mathrm{med}(k,k_2,k_3)}.
	\end{aligned}
	\end{equation*}
	
	We claim that
	\begin{equation}\label{l39}
	\begin{aligned}
	-\Phi(\xi,\eta,\sigma)
	\gtrsim \min(2^{-30(\alpha+1)p_0m},2^{-(\alpha+1)m/100}).
	\end{aligned}
	\end{equation}
	To verify this claim, one shall consider the following three sub-cases:\\
	{\bf{(i)}} \(\mathrm{med}(k,k_2,k_3)=k\). It is immediate that  
	\begin{equation*}
	\begin{aligned}
	-\Phi(\xi,\eta,\sigma)\gtrsim 2^{(\alpha+1)k}\gtrsim 2^{-(\alpha+1)m/100}.
	\end{aligned}
	\end{equation*}
	{\bf{(ii)}} \(\mathrm{med}(k,k_2,k_3)=k_2\).  
	If \(k\leq k_2\leq k_3 \), then one has
	\begin{equation*}
	\begin{aligned}
	-\Phi(\xi,\eta,\sigma)\gtrsim 2^{(\alpha+1)k_2}\geq 2^{(\alpha+1)k}\gtrsim 2^{-(\alpha+1)m/100}.
	\end{aligned}
	\end{equation*}
	If \(k_3\leq k_2\leq k\), then it holds
	\begin{equation*}
	\begin{aligned}
	-\Phi(\xi,\eta,\sigma)\gtrsim 2^{(\alpha+1)k_2}\geq 2^{(\alpha+1)\mathrm{med}(k_1,k_2,k_3)}\gtrsim 2^{-30(\alpha+1)p_0m}.
	\end{aligned}
	\end{equation*}
	{\bf{(iii)}} \(\mathrm{med}(k,k_2,k_3)=k_3\). This case is   symmetric case  {\bf{(ii)}}.

	Now the end of the poof proceeds similarly to {\bf{Case 1}} by using the bound \eqref{l39} and applying the integration by parts \eqref{l36}-\eqref{l37}. 
	
	\bigskip
	\noindent{\bf{Case 3: \((\iota_1,\iota_2,\iota_3)\in \{(+,+,-),(+,-,+),(-,+,+)\}\).}} We only consider the case \((\iota_1,\iota_2,\iota_3)=(+,+,-)\), and the other cases may be treated in a similar fashion.
	In this case, one has
	\begin{equation*}
	\begin{aligned}
	\Phi(\xi,\eta,\sigma)=\xi^{\alpha+1}-(\xi-\eta-\sigma)^{\alpha+1}-\eta^{\alpha+1}+(-\sigma)^{\alpha+1}.
	\end{aligned}
	\end{equation*}
	
	We consider the following two sub-cases.\\
	{\bf{(i) \(k_3=\min(k_1,k_2,k_3)\).}} In this case,  it follows from \eqref{l23} that
	\[
	k_3\in [-(1+20p_0)m,-(1-20p_0)m],\quad k_1,k_2\in [-30p_0m,p_0m/10].
	\]
	We have 
	\begin{equation}\label{l40}
	\begin{aligned}
	-\Phi(\xi,\eta,\sigma)
	&=\big[-\xi^{\alpha+1}+(\xi-\eta-\sigma)^{\alpha+1}
	+(\eta+\sigma)^{\alpha+1}\big]\\
	&\quad+\big[\eta^{\alpha+1}-(\eta+\sigma)^{\alpha+1}\big]
	-(-\sigma)^{\alpha+1}\\
	&\gtrsim 2^{(\alpha+1)\min(k_1,k_2)}\geq 2^{-30(\alpha+1)p_0m},
	\end{aligned}
	\end{equation}
	on the support of the integral.

	\vspace*{4pt}\noindent {\bf{(ii) \(k_3\neq \min(k_1,k_2,k_3)\).}} 
	In this case, using \eqref{l23} again, we have
	\[
	k_1\in [-(1+20p_0)m,-(1-20p_0)m],\quad k_2,k_3\in [-30p_0m,p_0m/10].
	\]	
	Observing \(k\gg k_1\), and taking the properties of support of the integral into account, one may estimate 
	\begin{equation}\label{l41}
	\begin{aligned}
	\quad\Phi(\xi,\eta,\sigma)
	&\geq \big[\xi^{\alpha+1}+(-\sigma)^{\alpha+1}-(\xi-\sigma)^{\alpha+1}\big]-(\xi-\eta-\sigma)^{\alpha+1}\\
	&\quad-|\eta^{\alpha+1}-(\xi-\sigma)^{\alpha+1}|\\
	&\geq 2^{(\alpha+1)\min(k,k_3)-1}-2^{(\alpha+1)k_1+1}-2^{k_1+\alpha k_2+10}\\
	&\geq 2^{(\alpha+1)\min(k,k_3)-2}
	\gtrsim \min(2^{-30(\alpha+1)p_0m},2^{-(\alpha+1)m/100}),
	\end{aligned}
	\end{equation}
	where we used \eqref{l23} in the last inequality.

	With \eqref{l40} and \eqref{l41} at hand,  we conclude the proof as in {\bf{Case 1}}.
\end{proof}

\bigskip
\noindent{\bf{Resonant regime: \(k_1,k_2,k_3\in[k-20,k+20]\).}} 

In oder to show \eqref{l24}, it is enough to prove the following two lemmas.

\begin{lemma}\label{le:3}
	We have
	\begin{align}\label{32}
	\bigg|\int_{t_1}^{t_2}e^{\mathrm{i}H(\xi,s)}I_{k_1,k_2,k_3}^{\iota_1,\iota_2,\iota_3}(\xi,s)\, d s\bigg|
	\lesssim \varepsilon_1^32^{-2p_0m}2^{-10k_+},
	\end{align}
	for \((\iota_1,\iota_2,\iota_3)\in \{(+,+,+),(+,-,-),(-,+,-),(-,-,+)\}\).
\end{lemma}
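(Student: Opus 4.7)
The plan is to exploit the crucial fact that the four sign combinations listed in this lemma are precisely those for which the cubic phase $\Phi(\xi,\eta,\sigma)$ avoids the space-time resonant set. On the support of $I_{k_1,k_2,k_3}^{\iota_1,\iota_2,\iota_3}$ with $k_1,k_2,k_3\in[k-20,k+20]$, I would first establish the uniform lower bound
\begin{equation*}
|\Phi(\xi,\eta,\sigma)| \gtrsim 2^{(\alpha+1)k},
\end{equation*}
and then run the time integration-by-parts scheme already implemented in Case 1 of the non-resonant analysis.

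To obtain this lower bound, I would rewrite $\Phi$ in each case as $\pm\bigl[(a_1+a_2+a_3)^{\alpha+1}-a_1^{\alpha+1}-a_2^{\alpha+1}-a_3^{\alpha+1}\bigr]$ for suitable positive $a_1,a_2,a_3$. For $(+,+,+)$ this follows from the identity $\xi=(\xi-\eta-\sigma)+\eta+\sigma$. For $(+,-,-)$ the identity $\xi-\eta-\sigma=\xi+|\eta|+|\sigma|$ gives $\Phi=\xi^{\alpha+1}+|\eta|^{\alpha+1}+|\sigma|^{\alpha+1}-(\xi+|\eta|+|\sigma|)^{\alpha+1}$. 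The cases $(-,+,-)$ and $(-,-,+)$ are symmetric and reduce to the same functional form after isolating the largest among $\{\xi,|\xi-\eta-\sigma|,|\eta|,|\sigma|\}$ on one side. Since all four of these positive quantities are of size $2^k$ in the resonant regime, the inequalities \eqref{key inequality-1} and \eqref{key inequality-2} yield the desired lower bound uniformly on the support, for both ranges of $\alpha$.

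With the bound in hand, I would integrate by parts in $s$ via $e^{-\mathrm{i}s\Phi}=-(\mathrm{i}\Phi)^{-1}\partial_s(e^{-\mathrm{i}s\Phi})$ exactly as in \eqref{l36}. The boundary terms carry the multiplier $m_2(\eta,\sigma)=\Phi^{-1}\varphi'_{k_1}(\xi-\eta-\sigma)\varphi'_{k_2}(\eta)\varphi'_{k_3}(\sigma)$, for which $\|\mathcal{F}^{-1}(m_2)\|_{L^1}\lesssim 2^{-(\alpha+1)k}$. Combining this with the trilinear estimate \eqref{ap2}, placing one factor in $L^\infty$ to harvest the $2^{-m/2}$ dispersive decay from \eqref{l18} and two factors in $L^2$, bounds the boundary contribution by $\epsilon_1^3 2^{-\alpha k+2p_0 m-m/2}$, which is acceptable given the constraints $k\in[-m/100,p_0 m]$ and the choice of $p_0$. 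For the interior term I would extract the length $\sim 2^m$ of the time window and expand $\partial_s[e^{\mathrm{i}H}\widehat{f_{k_1}}\widehat{f_{k_2}}\widehat{f_{k_3}}]$ into four pieces: three where $\partial_s$ lands on a profile, estimated via \eqref{l38}, and one where $\partial_s$ lands on $e^{\mathrm{i}H}$, for which the pointwise bound $|\partial_s H|\lesssim\epsilon_1^2 2^{(2-\alpha)k}2^{-m+2k_\pm}$ from \eqref{l11} and \eqref{l18} suffices.

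The main obstacle I anticipate is the algebraic verification that in each of the four sign patterns the phase truly reduces to the canonical form above with all $a_i\sim 2^k$; this is a case-by-case bookkeeping exercise, but it is crucial because the whole scheme collapses if any $a_i$ could be much smaller than $2^k$ (which would degrade the lower bound on $|\Phi|$). Once $|\Phi|\gtrsim 2^{(\alpha+1)k}$ is secured uniformly, the remaining analysis is a direct transcription of the integration-by-parts machinery from Case 1 of the non-resonant regime, with only the parameter bookkeeping in \eqref{l23} and the size restriction on $p_0$ needing to be checked to close the bound at $\epsilon_1^3 2^{-2p_0 m}2^{-10k_+}$.
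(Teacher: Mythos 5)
Your proposal is correct and follows essentially the same route as the paper: establish the uniform lower bound $|\Phi|\gtrsim 2^{(\alpha+1)k}$ on the support (the paper invokes the key inequalities \eqref{key inequality-1}--\eqref{key inequality-2}, while you make the reduction to the canonical form $\pm[(a_1+a_2+a_3)^{\alpha+1}-\sum a_i^{\alpha+1}]$ explicit sign by sign), then integrate by parts in $s$ as in \eqref{l36}--\eqref{l37} and bound the boundary terms $A_j$ and interior pieces $B_1,\dots,B_4$ via the trilinear estimate \eqref{ap2} together with \eqref{l18} and \eqref{l38}. The only caveat is cosmetic: the sign in the displayed inequality \eqref{key inequality-1} appears reversed in the paper for $\alpha\in(0,1)$ (it should read $(a+b+c)^{\alpha+1}-a^{\alpha+1}-b^{\alpha+1}-c^{\alpha+1}\gtrsim ba^\alpha$), but since only $|\Phi|$ enters the argument this typo has no bearing on either your proof or the paper's.
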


\begin{proof} We only present the proof of the case \((\iota_1,\iota_2,\iota_3)=(+,-,-)\), since the other cases can be handled analogously.
	Recalling \(k_1,k_2,k_3\in[k-20,k+20]\) and using \eqref{key inequality-1} and \eqref{key inequality-2}, we may estimate
	\begin{equation}\label{l42}
	\begin{aligned}
	|\Phi(\xi,\eta,\sigma)|\gtrsim 2^{(\alpha+1)k}.
	\end{aligned}
	\end{equation}
	Integration by parts in \(s\) as \eqref{l36}-\eqref{l37}, we are left to estimate \(A_j(\xi,t_j),\ j=1,2\) and \(B_j(\xi,s),\ j=1,2,3,4\). It follows from \eqref{l42} that
	\begin{equation*}
	\begin{aligned}
	\|\mathcal{F}^{-1}(m_2)\|_{L^1}\lesssim 2^{-(\alpha+1)k},
	\end{aligned}
	\end{equation*}
	and
	\begin{equation*}
	\begin{aligned}
	\|\mathcal{F}^{-1}(m_3)\|_{L^1}\lesssim \epsilon_1^22^{(1-2\alpha)k}2^{-m}2^{2k_{\pm}}.
	\end{aligned}
	\end{equation*}
	These two bounds together with  \eqref{ap2} allow us to estimate 
	\begin{equation*}
	\begin{aligned}
	A_j(\xi,t_j)
	\lesssim \varepsilon_1^32^{-\alpha k}2^{(2p_0-1/2)m}2^{-2N_0k_+}
	\lesssim \epsilon_1^32^{-m/4}2^{-10k_+},\quad j=1,2,
	\end{aligned}
	\end{equation*}
	\begin{equation*}
	\begin{aligned}
	\sup_{s\in[t_1,t_2]}B_4(\xi,s)
	&\lesssim \epsilon_1^52^{(2-2\alpha)k}2^{(2p_0-3/2)m}2^{-2N_0k_+}2^{2k_{\pm}}\\
	&\lesssim \epsilon_1^52^{-m}2^{-m/4}2^{-10k_+}.
	\end{aligned}
	\end{equation*}
	and 	
	\begin{equation*}
	\begin{aligned}
	\sup_{s\in[t_1,t_2]}\big[B_1(\xi,s)+B_2(\xi,s)+B_3(\xi,s)\big]
	&\lesssim \varepsilon_1^32^{-\alpha k}2^{(4p_0-3/2)m}2^{-N_0k_+}2^{2k_{\pm}}\\
	&\lesssim \epsilon_1^32^{-m}2^{-m/4}2^{-10k_+}.
	\end{aligned}
	\end{equation*}

\end{proof}

\begin{lemma}\label{le:2} It holds that
	\begin{equation}\label{31}
	\begin{aligned}
	&\quad\bigg|I_{k_1,k_2,k_3}^{+,+,-}(\xi,s)-\tilde{\tilde{c}}s^{-1}\xi|\xi|^{1-\alpha}\widehat{f_{k_1}}(\xi,s)\widehat{f_{k_2}}(\xi,s)\widehat{f_{k_3}}(-\xi,s)\bigg|\\
	&\quad+\bigg|I_{k_1,k_2,k_3}^{+,-,+}(\xi,s)-\tilde{\tilde{c}}s^{-1}\xi|\xi|^{1-\alpha}\widehat{f_{k_1}}(\xi,s)\widehat{f_{k_2}}(-\xi,s)\widehat{f_{k_3}}(\xi,s)\bigg|\\
	&\quad+\bigg|I_{k_1,k_2,k_3}^{-,+,+}(\xi,s)-\tilde{\tilde{c}}s^{-1}\xi|\xi|^{1-\alpha}\widehat{f_{k_1}}(-\xi,s)\widehat{f_{k_2}}(\xi,s)\widehat{f_{k_3}}(\xi,s)\bigg|\\
	&\lesssim \varepsilon_1^32^{-m}2^{-3p_0m}2^{-10k_+}.
	\end{aligned}
	\end{equation}
\end{lemma}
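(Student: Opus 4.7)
The plan is to treat each of the three resonant sign configurations by a two-dimensional stationary-phase analysis at the space-time resonance of $\Phi$. By the symmetries $\eta\leftrightarrow\sigma$ and $\xi\mapsto-\xi$ (together with reality of $u$), it is enough to handle $(\iota_1,\iota_2,\iota_3)=(+,+,-)$ in detail. Assuming $\xi>0$, direct computation shows that $\Phi$ and $\nabla_{\eta,\sigma}\Phi$ both vanish at $(\eta_*,\sigma_*)=(\xi,-\xi)$, and the integrand $G(\eta,\sigma):=\widehat{f_{k_1}^+}(\xi-\eta-\sigma)\widehat{f_{k_2}^+}(\eta)\widehat{f_{k_3}^-}(\sigma)$ at this point reduces to $G_*:=\widehat{f_{k_1}}(\xi)\widehat{f_{k_2}}(\xi)\widehat{f_{k_3}}(-\xi)$. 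Setting $u=\eta-\xi$, $v=\sigma+\xi$ and Taylor expanding yields
\[
\Phi = -\alpha(\alpha+1)\xi^{\alpha-1}\bigl(u^2+uv\bigr) + O\bigl(\xi^{\alpha-2}(|u|+|v|)^3\bigr),
\]
so the Hessian is indefinite and nondegenerate with $|\det H|=\alpha^2(\alpha+1)^2\xi^{2(\alpha-1)}$ and signature zero.

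Next, I would introduce a smooth cut-off $\chi_\delta(u,v)$ localizing on $\{|u|+|v|\lesssim\delta\}$ at scale $\delta=2^{(-1/2+\tau)m}\,\xi^{(1-\alpha)/2}$ for a small $\tau=\tau(p_0)>0$, and split $I^{+,+,-}_{k_1,k_2,k_3}=I_{\text{near}}+I_{\text{far}}$. On the far region one has $|\nabla_{\eta,\sigma}\Phi|\gtrsim\xi^{\alpha-1}\delta$, so integration by parts in the $\nabla\Phi$ direction, together with the $L^2$/$L^\infty$ bounds of \eqref{l18} and the bilinear Hölder-type estimates already used in the non-resonant regime, will give $|I_{\text{far}}|\lesssim\varepsilon_1^3\,2^{-m-3p_0m}\,2^{-10k_+}$. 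For $I_{\text{near}}$, decompose $G=G_*+(G-G_*)$. The remainder is controlled via the Hölder inequality $|\widehat{f_l}(\mu)-\widehat{f_l}(\mu')|\lesssim|\mu-\mu'|^{1/2}\|\partial\widehat{f_l}\|_{L^2}^{1/2}\|\widehat{f_l}\|_{L^\infty}^{1/2}$ on the support of $\chi_\delta$, yielding an acceptable contribution of size $\varepsilon_1^3\,\xi\,\delta^{5/2}\,2^{p_0 m/2}\,2^{-10k_+}$. For the $G_*$ piece, one replaces $\Phi$ by its quadratic part $Q$; the cubic phase remainder contributes a factor $O(s\xi^{\alpha-2}\delta^3)=O(2^{(-1/2+3\tau)m})$, which is $o(1)$ and can be expanded to produce an integrable error. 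The truncation of $\chi_\delta$ is harmless since $\delta\sqrt{s\xi^{\alpha-1}}=2^{\tau m}\gg1$, and the Fresnel integral
\[
\int_{\R^2} e^{-isQ(u,v)}\,du\,dv \;=\; \frac{2\pi}{s\,|\alpha|(\alpha+1)\,\xi^{\alpha-1}}
\]
(signature zero, no phase factor) multiplied by $\xi G_*$ reproduces exactly the claimed correction $\tilde{\tilde c}\,s^{-1}\xi\,|\xi|^{1-\alpha}\,G_*$, with $\tilde{\tilde c}=2\pi/[|\alpha|(\alpha+1)]$.

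The main obstacle is the simultaneous tuning of $\delta$. Four constraints must hold: the cubic phase remainder $s\xi^{\alpha-2}\delta^3$ must be $o(1)$, the Hölder defect $G-G_*$ on the support of $\chi_\delta$ must be acceptable, the factor $(s\xi^{\alpha-1}\delta)^{-1}$ produced by one far-region integration by parts must be recovered by the smallness of the remaining integrand, and the Gaussian truncation $\delta\sqrt{s\xi^{\alpha-1}}$ must be large. The frequency restriction $k\in[-m/100,p_0m]$ in \eqref{l23} together with the $Z$-norm and $H^{1,1}$-norm bounds of \eqref{l18} make all four conditions simultaneously satisfiable for a sufficiently small $\tau=\tau(\alpha,p_0)>0$, thereby yielding the gain $2^{-3p_0m}$ over the leading order. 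The remaining two configurations $(+,-,+)$ and $(-,+,+)$ are handled verbatim after a permutation of the integration variables, completing the proof.
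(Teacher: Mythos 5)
Your proposal follows the same strategy the paper employs for the parallel fNLS resonant regime in Section 3 (and attributes to [SW1, Theorem 4.3] for the fKdV case itself): localize near the space-time resonance $(\eta,\sigma)=(\xi,-\xi)$ at scale $\delta\sim t^{-1/2}|\xi|^{(1-\alpha)/2}$, integrate by parts away from it using the lower bound on $\nabla_{\eta,\sigma}\Phi$, Taylor-expand the phase to its indefinite quadratic part near it, compare the profile to its value at the resonance, and read off $\tilde{\tilde{c}}=2\pi/[|\alpha|(\alpha+1)]$ from the signature-zero Fresnel integral; your computation $|\det H|=\alpha^2(\alpha+1)^2\xi^{2(\alpha-1)}$ and the resulting Fresnel value are correct. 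Two technical wrinkles to fix before writing out the details: the modulus-of-continuity inequality you quote, $|\widehat{f_l}(\mu)-\widehat{f_l}(\mu')|\lesssim|\mu-\mu'|^{1/2}\|\partial\widehat{f_l}\|_{L^2}^{1/2}\|\widehat{f_l}\|_{L^\infty}^{1/2}$, is not scale-invariant --- the elementary bound is $|\widehat{f_l}(\mu)-\widehat{f_l}(\mu')|\lesssim|\mu-\mu'|^{1/2}\|\partial\widehat{f_l}\|_{L^2}$, which is exactly what \eqref{l18} feeds (interpolating with the $L^\infty$ bound would lower the exponent of $|\mu-\mu'|$ to $1/4$, not keep it at $1/2$); and the paper handles the far region by a dyadic decomposition in $|\eta-\xi|$, $|\sigma+\xi|$ followed by a twice-iterated integration by parts (cf.\ the treatment of $G_6$ through $G_9$ for the fNLS case), which avoids the mild bookkeeping needed to rule out a logarithmic loss in your single-cutoff, single-IBP version. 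Neither wrinkle is fatal: the correct Hölder bound is stronger than your stated one in the regime used, and any log loss is absorbed by the margin $2^{-3p_0m}$ you prove versus the $2^{-2p_0m}$ that Proposition 3 requires.
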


\begin{proof} The proof is identical to a similar one in \cite[Theorem 4.3]{SW1}, so we omit.   
	
\end{proof}

\section{Proof of Theorem \ref{th:main-2}}\label{Proof of mfNLS}

\subsection{Decay estimates}

We have the decay estimates for the solution of the equations \eqref{eq:main-2}:
\begin{lemma}\label{decay:fNLS}
	Let \(\alpha\in (-1,1)\setminus\{0\}\) and \(t\geq 1\). Assume that \(u\) is the solution of \eqref{eq:main-2} and satisfies
	\begin{align*}
	t^{-p_0}\|u\|_{H^{N_0}}+(1+t)^{-p_0}\|f\|_{H^{1,1}}+\|f\|_Z\leq 1,
	\end{align*}
	then it holds
	\begin{align}\label{d3}
	\|u\|_{L^\infty}+\|\partial_xu\|_{L^\infty}\lesssim t^{-1/2}.
	\end{align}
	
\end{lemma}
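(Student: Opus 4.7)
The plan is to mirror the proof of Lemma \ref{decay:fKdV} almost verbatim, exploiting the fact that the relevant stationary-phase data for the two equations is the same. Indeed, for the modified fKdV equation the phase is $\xi|\xi|^{\alpha}$ with $\partial_\xi^2(\xi|\xi|^{\alpha}) = \alpha(\alpha+1)|\xi|^{\alpha-1}\mathrm{sgn}(\xi)$, while for the modified fNLS equation the phase is $|\xi|^{\alpha+1}$ with $\partial_\xi^2(|\xi|^{\alpha+1}) = \alpha(\alpha+1)|\xi|^{\alpha-1}$. Both second derivatives have magnitude $\simeq 2^{(\alpha-1)k}$ on $|\xi|\simeq 2^k$, so the van der Corput/stationary-phase bound on the linear propagator at frequency $2^k$ produces the same factor $t^{-1/2}2^{(1-\alpha)k/2}$. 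Consequently the frequency-by-frequency decay estimates \eqref{l4} and \eqref{l5} (and the interpolation lemma \eqref{ap1}) used in Lemma \ref{decay:fKdV} hold verbatim with $e^{-t|D|^\alpha\partial_x}$ replaced by $e^{\mathrm{i}t|D|^{\alpha+1}}$.

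As in the fKdV case, since the range $-1<\alpha<0$ is covered in \cite{SW1}, it suffices to treat $0<\alpha<1$. I would first establish the pointwise Bernstein-type bound
\[
2^{3k/2}\|P_k u\|_{L^\infty}\lesssim t^{-1/2}
\]
in two regimes. In the high-frequency regime $2^k\geq t^{(1-4p_0)/5}$ I would combine the linear decay estimate with the Sobolev/Plancherel interpolation \eqref{ap1} and the energy bound $\|P_k f\|_{H^{N_0}}\lesssim t^{p_0}$, so that the large power $2^{-N_0 k/2}$ absorbs the polynomial factors exactly as in \eqref{high frequency}. In the low-frequency regime $2^k\leq t^{(1-4p_0)/5}$ I would use the $Z$-norm bound $(2^{(1-\alpha)k/4}+2^{10k})\|\widehat{P_k^\prime f}\|_{L^\infty}\lesssim 1$ on the leading term in \eqref{l4}, and the $H^{1,1}$-norm bound $\|\widehat{P_k^\prime f}\|_{L^2}+2^k\|\partial\widehat{P_k^\prime f}\|_{L^2}\lesssim t^{p_0}$ on the error term; the numerology $\tfrac{1-\alpha}{2}\in(0,\tfrac{1}{2})$ and $-\tfrac{1+3\alpha}{4}\in(-1,-\tfrac{1}{4})$ is identical to the fKdV analysis.

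To conclude the $L^\infty$ bound for $u$, I would sum dyadically: the positive frequencies give a convergent geometric series because of the factor $2^{-3k/2}$; the extremely low frequencies $2^k\leq t^{-1}$ are handled by the $L^2\to L^\infty$ Bernstein inequality $\|P_k u\|_{L^\infty}\lesssim 2^{k/2}\|u\|_{L^2}$; the intermediate negative range $t^{-1}\leq 2^k\leq 1$ is split at the threshold $2^k=t^{-4p_0/(1-\alpha)}$, with the $Z$-norm piece \eqref{l4} used above that threshold and the dispersive estimate \eqref{l5} below it. Here the choice $p_0\in(0,10^{-3}(1-\alpha)]$ is exactly what ensures the exponent $\tfrac{(1+3\alpha)p_0}{1-\alpha}+p_0<\tfrac14$ so that summation over dyadic scales recovers $t^{-1/2}$. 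The estimate on $\|\partial_x u\|_{L^\infty}$ is then immediate by inserting an extra factor $2^k$ and using $\|\partial_x u\|_{L^\infty}\lesssim \|u\|_{L^\infty}\sum_{k\leq 0}2^k+\sup_{k\geq 0}(2^{3k/2}\|P_k u\|_{L^\infty})\sum_{k\geq 0}2^{-k/2}$.

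The essentially only step where one must be careful is verifying that the dispersive estimate \eqref{l4} (which was stated originally for $e^{-t|D|^\alpha\partial_x}$) still produces the same two-term split for the new propagator $e^{\mathrm{i}t|D|^{\alpha+1}}$, since this is what controls the interplay between the $Z$-norm and the $H^{1,1}$ weight in the subcritical $2^k\leq 1$ range. Once this is granted (which follows from the matching second-derivative behaviour of the two phases noted above), the remainder of the proof is a line-by-line transcription of Lemma \ref{decay:fKdV}.
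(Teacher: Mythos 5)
Your proposal is correct and matches the paper's approach: the paper records the needed linear dispersive estimates for the fNLS propagator as \eqref{d1}--\eqref{d2} (proved in the Appendix by the same stationary-phase argument, relying precisely on the matching magnitude of $\partial_\xi^2|\xi|^{\alpha+1}$ that you identify) and then invokes the proof of Lemma~\ref{decay:fKdV} verbatim. Your dyadic decomposition, the splitting at $2^k\simeq t^{(1-4p_0)/5}$ and $2^k\simeq t^{-4p_0/(1-\alpha)}$, and the treatment of $\|\partial_x u\|_{L^\infty}$ all coincide with the paper's argument.
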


\begin{proof} Based on \eqref{d1} and \eqref{d2}, the proof is exactly same to the one in \cite[Lemma 2.2]{SW1} for \(\alpha\in (-1,0)\), and the one in Lemma \ref{decay:fKdV} for \(\alpha\in (0,1)\).

\end{proof}

From Lemma \ref{decay:fNLS}, we see that \eqref{fNLS-decay} is a consequence of \eqref{fNLS2}.

\subsection{The main estimates}

We first prove the uniform bounds for the energy parts in \eqref{l9}. 
\begin{proposition}\label{pr:1} Let \(u\) be a solution of the equation \eqref{eq:main-2} with initial data \eqref{eq:initial-1} satisfying the a priori bounds \eqref{l8}. Then the following estimates hold true:
	\begin{align}\label{6}
	\|u(t,\cdot)\|_{H^{N_0}}\leq C\varepsilon_0\langle t \rangle^{C\varepsilon_1^2},
	\end{align}
	and
	\begin{align}\label{7}
	\|f(t,\cdot)\|_{H^{1,1}}\leq C(\varepsilon_0+ \varepsilon_1^3)\langle t \rangle^{C\varepsilon_1^2}.
	\end{align}	
\end{proposition}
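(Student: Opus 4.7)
\medskip

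\noindent\textbf{Proof Plan.} The strategy mirrors Proposition \ref{lpr:1} for the modified fKdV equation, since structurally the only differences are that the dispersive symbol is \(-\mathrm i|\xi|^{\alpha+1}\) in place of \(-\mathrm i\xi|\xi|^{\alpha}\) and the nonlinearity is \(\mp\mathrm i|u|^{2}u\) in place of \(\mp u^{2}\partial_{x}u\). As noted in Remark~7 of the introduction, the presence (or absence) of a derivative in the nonlinearity plays no role in our framework, so the same argument applies almost verbatim; I would invoke \cite[Theorem~3.1]{SW1} with minor changes.

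For \eqref{6}, I would apply \(|D|^{N_{0}}\) to \eqref{eq:main-2}, pair with \(\overline{|D|^{N_{0}}u}\), and take the imaginary part. The dispersive contribution vanishes because \(|D|^{\alpha+1}\) is self-adjoint, leaving
\[
\tfrac12\tfrac{d}{dt}\|u\|_{H^{N_{0}}}^{2}\lesssim\Bigl|\mathrm{Im}\int_{\mathbb R}|D|^{N_{0}}(|u|^{2}u)\,\overline{|D|^{N_{0}}u}\,dx\Bigr|.
\]
The Kato--Ponce fractional Leibniz inequality gives \(\||D|^{N_{0}}(|u|^{2}u)\|_{L^{2}}\lesssim\|u\|_{L^{\infty}}^{2}\|u\|_{H^{N_{0}}}\); Lemma \ref{decay:fNLS} supplies \(\|u\|_{L^{\infty}}\lesssim\varepsilon_{1}t^{-1/2}\), so \(\tfrac{d}{dt}\|u\|_{H^{N_{0}}}^{2}\lesssim\varepsilon_{1}^{2}\langle t\rangle^{-1}\|u\|_{H^{N_{0}}}^{2}\), and Gronwall yields \eqref{6}.

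For \eqref{7}, split \(\|f\|_{H^{1,1}}\) into the four pieces \(\|f\|_{L^{2}}\), \(\|\partial_{x}f\|_{L^{2}}\), \(\|xf\|_{L^{2}}\), \(\|x\partial_{x}f\|_{L^{2}}\). The first two are essentially free: \(\|f\|_{L^{2}}=\|u\|_{L^{2}}\) is conserved (mass), while \(\|\partial_{x}f\|_{L^{2}}=\|\partial_{x}u\|_{L^{2}}\le\|u\|_{H^{N_{0}}}\) is already controlled by the first bound. The real work is in the weighted norms. Passing to Fourier variables, \(\|xf\|_{L^{2}}=\|\partial_{\xi}\widehat f\|_{L^{2}}\), and from \(\partial_{t}\widehat f=-\mathrm ie^{\mathrm it|\xi|^{\alpha+1}}\widehat{\mathcal N}\) one gets
\[
\partial_{t}\partial_{\xi}\widehat f(\xi,t)\;=\;(\alpha+1)\,t\,|\xi|^{\alpha}\mathrm{sgn}(\xi)\,e^{\mathrm it|\xi|^{\alpha+1}}\widehat{\mathcal N}(\xi,t)\;-\;\mathrm ie^{\mathrm it|\xi|^{\alpha+1}}\partial_{\xi}\widehat{\mathcal N}(\xi,t).
\]
The second term is harmless: \(\partial_{\xi}\widehat{\mathcal N}\) rewrites as a Fourier transform of an expression in which one factor of \(u\) is replaced by \(xu\), and a straightforward Hölder bound produces a term of size \(\lesssim\varepsilon_{1}^{2}t^{-1}\|xf\|_{L^{2}}\). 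The \textbf{main obstacle} is the first term, where the explicit factor \(t\) is dangerous. The resolution is to represent \(\widehat{\mathcal N}\) via the profile as in \eqref{l10}: expanding the cubic over the \((\xi_{1},\xi_{2})\)-integral and using that on this trilinear representation
\[
(\alpha+1)|\xi|^{\alpha}\mathrm{sgn}(\xi)\;=\;-\,\partial_{\xi}\bigl[|\xi_{1}|^{\alpha+1}-|\xi_{2}|^{\alpha+1}+|\xi-\xi_{1}-\xi_{2}|^{\alpha+1}-|\xi|^{\alpha+1}\bigr]\;+\;(\alpha+1)|\xi-\xi_{1}-\xi_{2}|^{\alpha}\mathrm{sgn}(\xi-\xi_{1}-\xi_{2}),
\]
the dangerous factor \(t\) can be converted to a \(\partial_{s}\) of the oscillatory phase and integrated by parts in \(\xi_{1}\) (or \(\xi_{2}\)), distributing the derivative onto \(\widehat f\) at the cost of an \(L^{\infty}\)-factor with good decay. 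The outcome is an estimate of the shape
\[
\tfrac{d}{dt}\|xf\|_{L^{2}}^{2}\;\lesssim\;\varepsilon_{1}^{2}\langle t\rangle^{-1}\bigl(\|xf\|_{L^{2}}^{2}+\|f\|_{H^{1}}^{2}\bigr),
\]
and Gronwall closes the bound with exponent \(C\varepsilon_{1}^{2}\). The estimate for \(\|x\partial_{x}f\|_{L^{2}}\) is obtained by the same procedure applied to \(\partial_{\xi}(\xi\widehat f)\); no new difficulty arises. Collecting the four pieces gives \eqref{7}.
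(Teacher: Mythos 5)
Your treatment of \eqref{6} is fine and is essentially the paper's argument (the paper uses Duhamel rather than a direct energy pairing; the content is the same). Likewise the pieces $\|f\|_{L^2}$ and $\|\partial_x f\|_{L^2}$ present no difficulty. The gap is in your handling of the weighted norms.

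You correctly identify the dangerous term $(\alpha+1)t|\xi|^{\alpha}\mathrm{sgn}(\xi)\,e^{\mathrm it|\xi|^{\alpha+1}}\widehat{\mathcal N}(\xi,t)$ arising when $\partial_\xi$ hits the phase, but your proposed resolution does not kill the explicit factor of $t$. In the decomposition $(\alpha+1)|\xi|^\alpha\mathrm{sgn}(\xi)=\partial_\xi\Phi+(\alpha+1)|\xi_3|^\alpha\mathrm{sgn}(\xi_3)$, the piece $t(\partial_\xi\Phi)e^{\mathrm it\Phi}=-\mathrm i\partial_\xi e^{\mathrm it\Phi}$ is a derivative in the \emph{external} variable $\xi$, not in the integration variables $\xi_1,\xi_2$; if you integrate it by parts in the outer $\xi$-integral you put a second $\partial_\xi$ onto $\widehat f$, producing $\|x^2 f\|_{L^2}$ which is not in the bootstrap norm. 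If instead you try to trade $|\xi_3|^\alpha$ for $\partial_{\xi_1}\Phi$, the identity $\partial_{\xi_1}\Phi=-(\alpha+1)|\xi_1|^\alpha\mathrm{sgn}(\xi_1)+(\alpha+1)|\xi_3|^\alpha\mathrm{sgn}(\xi_3)$ leaves a residual $t(\alpha+1)|\xi_1|^\alpha$ of exactly the same bad size; one simply chases the $t$-factor from slot to slot. In every case the best bound you can reach for the dangerous contribution is $t\||D|^\alpha\mathcal N(u)\|_{L^2}\lesssim\varepsilon_1^2\|u\|_{H^{N_0}}$, which is bounded but not $O(t^{-1})$, so the Gronwall inequality $\tfrac{d}{dt}\|xf\|_{L^2}^2\lesssim\varepsilon_1^2\langle t\rangle^{-1}(\cdots)$ that you claim does not follow; it would give linear rather than logarithmic growth.

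The paper's mechanism is different and is the ingredient you are missing: it introduces $\mathcal J=e^{-\mathrm it|D|^{\alpha+1}}xe^{\mathrm it|D|^{\alpha+1}}$ and the scaling vector field $S=(\alpha+1)t\partial_t+x\partial_x$, with $[\mathcal L,\mathcal J]=0$ and $[\mathcal L,S]=\mathrm i(\alpha+1)\mathcal L$ for $\mathcal L=\mathrm i\partial_t-|D|^{\alpha+1}$. Because $S$ is a genuine first-order differential operator in $(t,x)$ it obeys the Leibniz rule on $|u|^2u$, giving $\|S\mathcal N(u)\|_{L^2}\lesssim\|u\|_{L^\infty}^2\|Su\|_{L^2}\lesssim\varepsilon_1^2 t^{-1}\|Su\|_{L^2}$ — this is exactly where the $t^{-1}$ enters. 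Then $\|x\partial_x f\|_{L^2}$ is \emph{not} estimated by a Gronwall at all: the paper proves the pointwise identity
\[
\xi\partial_\xi\widehat f(\xi)=-e^{\mathrm it|\xi|^{\alpha+1}}\bigl[\mathrm i(\alpha+1)t\widehat{\mathcal N(u)}+\widehat{Su}+\widehat u\bigr](\xi),
\]
in which the explicit $t$-factor from $\partial_\xi e^{\mathrm it|\xi|^{\alpha+1}}$ has been cancelled against the $(\alpha+1)t\partial_t$ part of $S$ acting on the linear flow. The remaining $t\widehat{\mathcal N(u)}$ is harmless because $t\|\mathcal N(u)\|_{L^2}\lesssim t\cdot\varepsilon_1^2 t^{-1}\|u\|_{L^2}\lesssim\varepsilon_1^3$ is merely \emph{bounded}; no time integration is required for this term. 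You should combine $x\partial_x$ with $(\alpha+1)t\partial_t$ before attacking the weighted estimate, rather than working with $\partial_\xi\widehat f$ in isolation, and you should note that the $\|x\partial_x f\|_{L^2}$ bound is of a different (algebraic) nature than the $\|xf\|_{L^2}$ bound, contrary to your remark that ``no new difficulty arises.''
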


\begin{proof} For convenience, we define the short-hand notation \(\mathcal{N}(u)=-|u|^2u\).
	Using Duhamel's principle, one can express the solution as
	\begin{equation*}
	\begin{aligned}
	u(t)=e^{-\mathrm{i}t|D|^{\alpha+1}}u(1)+\mathrm{i}\int_1^te^{-\mathrm{i}(t-s)|D|^{\alpha+1}}\mathcal{N}(u)\, d s,
	\end{aligned}
	\end{equation*}
	which gives the following estimate
	\begin{equation*}
	\begin{aligned}
	\|u\|_{H^{N_0}}&\lesssim \|u(1)\|_{H^{N_0}}+\int_1^t\|u\|_{L^\infty}^2\|u\|_{H^{N_0}}\, d s\\
	&\lesssim \varepsilon_0+\int_1^t\varepsilon_1^2s^{-1}\|u\|_{H^{N_0}}\, d s.
	\end{aligned}
	\end{equation*}
The estimate \eqref{6} immediately follows.
	
The estimate \eqref{7} will be built upon some estimates involving the following operators  
	\begin{equation*}
	\begin{aligned}
	\mathcal{L}=\mathrm{i}\partial_t-|D|^{\alpha+1},\quad \mathcal{J}=e^{-\mathrm{i}t|D|^{\alpha+1}}xe^{\mathrm{i}t|D|^{\alpha+1}},
	\quad S=(\alpha+1)t\partial_t+x\partial_x.
	\end{aligned}
	\end{equation*}   
	It is straightforward to check that 
	\begin{align}
	[\mathcal{L},\mathcal{J}]=0,\quad [\mathcal{L},S]=\mathrm{i}(\alpha+1)\mathcal{L}.
	\end{align}
	Hence we may estimate 
	\begin{equation*}
	\begin{aligned}
	\|\mathcal{J}u\|_{L^2}
	&\lesssim\|\mathcal{J}u(1)\|_{L^2}+\int_1^t\|\mathcal{J}\mathcal{N}(u)\|_{L^2}\, d s\\
	&\lesssim \varepsilon_0+\int_1^t\varepsilon_1^2s^{-1}\|\mathcal{J}u\|_{L^2}\, d s,
	\end{aligned}
	\end{equation*}
	and
	\begin{equation*}
	\begin{aligned}
	\|Su\|_{L^2}
	&\lesssim \|Su(1)\|_{L^2}+\int_1^t\|[S+\mathrm{i}(\alpha+1)]\mathcal{N}(u)\|_{L^2}\, d s\\
	&\lesssim \varepsilon_0+\int_1^t\varepsilon_1^2s^{-1}\|Su\|_{L^2}\, d s,
	\end{aligned}
	\end{equation*}
	which yields 
	\begin{equation*}
	\begin{aligned}
	\|\mathcal{J}u\|_{L^2}+\|Su\|_{L^2}
	\leq C\varepsilon_0\langle t\rangle^{C\varepsilon_1^2}.
	\end{aligned}
	\end{equation*}
	
	The desired bound on \(\|xf\|_{L^2}\) is immediate due to \(\|xf\|_{L^2}=\|\mathcal{J}u\|_{L^2}\). It remains to control \(\|x\partial_xf\|_{L^2}\).
	To this end, 
	one first calculates
	\begin{equation*}
	\begin{aligned}
	\xi\partial_\xi \widehat{f}(\xi)
	=-e^{\mathrm{i}t|\xi|^{\alpha+1}}\big[\mathrm{i}(\alpha+1)t\widehat{\mathcal{N}(u)}+\widehat{Su}+\widehat{u}\big](\xi),
	\end{aligned}
	\end{equation*}
and then obtains
	\begin{equation*}
	\begin{aligned}
	\|\mathcal{F}(x\partial_xf)\|_{L^2}\leq \|\xi\partial\widehat{f}\|_{L^2}+\|\widehat{f}\|_{L^2}
	\leq C(\varepsilon_0+\varepsilon_1^3)\langle t\rangle^{C\varepsilon_1^2}.
	\end{aligned}
	\end{equation*}

\end{proof}

We now turn to proving the uniform bound for Z-norm part in \eqref{l9} which constitutes the main body of the remaining proof. 
Taking Fourier transform on \eqref{eq:main-2} gives
\begin{equation}\label{8}
\begin{aligned}
\partial_t\widehat{f}(\xi,t)=\frac{\mathrm{i}}{2\pi}\int_{\mathbb{R}^2}e^{\mathrm{i}t\Phi(\xi,\eta,\sigma)}
\widehat{f}(\xi-\eta,t)\widehat{f}(\eta-\sigma,t)\widehat{\overline{f}}(\sigma,t)\, d\eta d \sigma=J(\xi,t),
\end{aligned}
\end{equation}
in which  
\begin{equation*}\label{11}
\begin{aligned}
\Phi(\xi,\eta,\sigma)=|\xi|^{\alpha+1}-|\xi-\eta|^{\alpha+1}-|\eta-\sigma|^{\alpha+1}+|\sigma|^{\alpha+1}.
\end{aligned}
\end{equation*}
Let
\begin{align}\label{def:L}
L(\xi,t):=\frac{-|\xi|^{1-\alpha}}{|\alpha|(\alpha+1)}\int_1^t|\widehat{f}(\xi,s)|^2\,\frac{d s}{s},
\end{align}	
and
\begin{align*}
g(\xi,t):=e^{\mathrm{i}L(\xi,t)}\widehat{f}(\xi,t).
\end{align*}	
One substitutes these formulae into \eqref{8} to find that 
\begin{equation*}
\begin{aligned}
\partial_t g(\xi,t)&=e^{\mathrm{i}L(\xi,t)}\big[ J(\xi,t)-\mathrm{i}\tilde{c}t^{-1}|\xi|^{1-\alpha}|\widehat{f}(\xi,t)|^2\widehat{f}(\xi,t)\big],
\end{aligned}
\end{equation*}
where \(\tilde{c}:=[|\alpha|(\alpha+1)]^{-1}\). Then the \(Z\)-norm bound in \eqref{l9} is an immediate consequence of the following:
\begin{proposition}\label{pr:2}
	It holds that
	\begin{align}\label{9}
	t_1^{p_0}\left\|(|\xi|^{(1-\alpha)/4}+|\xi|^{10})\big[g(\xi,t_2)-g(\xi,t_1)\big]\right\|_{L^\infty_\xi}\lesssim\epsilon_0,
	\end{align} 
	for any \(t_1\leq t_2\in[1,T]\). 
\end{proposition}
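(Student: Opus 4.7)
My plan is to follow closely the route of Sect.\ref{Proof of mfKdV}, exploiting the correspondence pointed out in the introduction between the resonance analysis for \eqref{eq:main-2} and a subset of the sign cases handled for \eqref{eq:main-1}. Starting from the phase-corrected ODE
$$\partial_t g(\xi,t) = e^{\mathrm{i}L(\xi,t)}\big[J(\xi,t) - \mathrm{i}\tilde c\,t^{-1}|\xi|^{1-\alpha}|\widehat f(\xi,t)|^2\widehat f(\xi,t)\big],$$
I would perform a Littlewood--Paley decomposition of $J$ into pieces $J_{k_1,k_2,k_3}$, reduce to dyadic time intervals $s\in[2^m-1,2^{m+1}]$, and use the a priori bounds \eqref{l8} together with the decay \eqref{d3} to discard the contributions from very high frequencies ($k\ge p_0 m$), very low frequencies ($k\le-10p_0m/(1-\alpha)$), and configurations with $\min(k_1,k_2,k_3)\le-4m$ or with one of the $k_j$'s exceeding $p_0m/10$. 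This mirrors the reduction leading to Proposition \ref{lpr:3} and leaves only a bounded window of $(k,k_1,k_2,k_3)$.

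In the non-resonant regime $\max(|k_i-k|)\ge 21$, I would integrate by parts in $\eta$ (or $\sigma$) using the lower bounds on $\partial_\eta\Phi$ or $\partial_\sigma\Phi$ for the fNLS phase
$$\Phi(\xi,\eta,\sigma)=|\xi|^{\alpha+1}-|\xi-\eta|^{\alpha+1}-|\eta-\sigma|^{\alpha+1}+|\sigma|^{\alpha+1}.$$
On the support of the integral these derivatives are bounded below by a suitable power of the maximal frequency scale, so that the bilinear estimate \eqref{ap2}, applied to profiles of the form $e^{\mathrm{i}s|\mu|^{\alpha+1}}\widehat f_{k_j}(\mu)$, reproduces $s^{-3/2}$-type decay with acceptable losses, exactly as in the fKdV estimates \eqref{estimate:F_1}--\eqref{estimate:F_3-2}.

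The core of the argument is the resonant regime $k_1,k_2,k_3\in[k-20,k+20]$, where I would split $\widehat f=\widehat{f^+}+\widehat{f^-}$ by Fourier support. Because the fNLS nonlinearity $\widehat f(\xi-\eta)\widehat f(\eta-\sigma)\widehat{\overline f}(\sigma)$ carries one complex conjugation, only four sign combinations are nontrivial, which by the correspondence in the introduction match a subset of the seven fKdV combinations. For those combinations in which $|\Phi|\gtrsim 2^{(\alpha+1)k}$ (no space-time resonance), I integrate by parts in $s$ along the lines of \eqref{l36}--\eqref{l37}: boundary terms are controlled by the $1/\Phi$ factor, the derivatives falling on $\widehat f$ use an $L^2$ bound on $\partial_s\widehat f_l$ derived from \eqref{8} as in \eqref{l38}, and the derivative hitting $e^{\mathrm{i}L}$ contributes $\partial_sL$, which is small by the $Z$-norm hypothesis. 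The remaining combination is genuinely space-time resonant: the stationary point is $(\eta,\sigma)=(0,-\xi)$, at which $\Phi=0$ and the product of profiles reduces to $|\widehat f(\xi)|^2\widehat f(\xi)$. A stationary-phase expansion at this point produces the correction $\tilde c\,s^{-1}|\xi|^{1-\alpha}|\widehat f(\xi,s)|^2\widehat f(\xi,s)$, which cancels exactly with the subtraction inside \eqref{9}, leaving a remainder that is controlled by the same bilinear arguments used in Lemma \ref{le:2}.

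The principal obstacle is this last step: one must verify that the Hessian of $\Phi$ at $(0,-\xi)$ is non-degenerate and that the constant produced by the stationary-phase method is precisely $\tilde c=[|\alpha|(\alpha+1)]^{-1}$ appearing in \eqref{def:L}, uniformly for $\alpha\in(-1,0)\cup(0,1)$ and for $|\xi|$ in the reduced window $[2^{-m/100},2^{p_0m}]$. Once the coefficient matching is established, the remaining estimates parallel Sect.\ref{Proof of mfKdV}, and summing the $2^{-p_0m}$ gain over $m\ge 1$ yields \eqref{9} and hence the desired $Z$-norm control in \eqref{l9}.
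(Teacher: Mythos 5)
Your proposal follows the paper's high-level strategy for the fNLS $Z$-norm bound: Littlewood--Paley decomposition of $J$, dyadic time localization, reduction to a bounded frequency window, integration by parts in $\eta$ or $\sigma$ in the non-resonant regime, and a stationary-phase analysis at the resonance producing the logarithmic correction. The reduction to the analogue of Proposition~\ref{lpr:3} (namely Proposition~\ref{pr:reduce}) is accurately described.

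However, there is a misstep in your treatment of the resonant regime. You propose to split $\widehat f=\widehat{f^+}+\widehat{f^-}$ by Fourier support and to reproduce the seven fKdV sign cases, claiming that ``only four sign combinations are nontrivial.'' This is not how the paper proceeds, and the idea does not actually help here: for fNLS the phase $\Phi$ is built from $|\cdot|^{\alpha+1}$, which is \emph{even}, so restricting to positive or negative frequencies does not alter the phase. The sign decomposition in Section~\ref{Proof of mfKdV} works because the fKdV phase involves $|\cdot|^{\alpha}\cdot$, an odd function, so that the phase takes genuinely different forms on the various sign octants. For fNLS the paper uses $f^\pm$ only to track the complex conjugate ($f^+=f$, $f^-=\bar f$), not Fourier support, and there is a single resonant configuration. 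You have also misread the remark in the introduction about the ``four cases'': that count refers to the generalized nonlinearity $\mp|u|^2u+c_1u^3+c_2u\bar u^2+c_3\bar u^3$, not to the classical cubic term.

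The more substantial issue is that the ``principal obstacle'' you flag---non-degeneracy of the Hessian and matching the constant $\tilde c=[|\alpha|(\alpha+1)]^{-1}$---is exactly where the real work happens, and you leave it unresolved. The paper handles it by first performing the change of variables $\eta\mapsto-\eta$, $\sigma\mapsto-(\xi+\eta+\sigma)$ so that the stationary point moves to $(\eta,\sigma)=(0,0)$ and $\tilde\Phi\approx\alpha(\alpha+1)|\xi|^{\alpha-1}\eta\sigma$; then introducing a further dyadic localization at the scale $2^{\bar l}\approx 2^{(1-\alpha)k/2}2^{-49m/100}$, integrating by parts twice in $\eta$ in the off-diagonal pieces ($l_2\ge\max(l_1,\bar l+1)$), Taylor-expanding $\tilde\Phi$ on the diagonal piece $l_1=l_2=\bar l$, and evaluating the resulting Gaussian-type integral $\int e^{-\mathrm{i}xy}\varphi(x/N)\varphi(y/N)\,dx\,dy=2\pi+\mathcal{O}(N^{-1/2})$ to recover exactly $\tilde{\tilde c}s^{-1}|\xi|^{1-\alpha}$. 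Without this computation, the cancellation that underlies \eqref{9} is only asserted, not established.
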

  The inequality \eqref{9} also implies the estimate \eqref{fNLS3}.

\subsubsection{Reduction of \eqref{9}}
We let \(f_k^+=P_kf,f_k^-=P_k\bar{f}\), and decompose
\begin{equation}\label{9.5}
\begin{aligned}
J(\xi,t)=\mathrm{i}(2\pi)^{-1}\sum_{k_1,k_,2,k_3\in\mathbb{Z}}J_{k_1,k_,2,k_3}(\xi,t),
\end{aligned}
\end{equation}
where
\begin{equation}\label{10}
\begin{aligned}
J_{k_1,k_,2,k_3}(\xi,t)&=\int_{\mathbb{R}^2}e^{\mathrm{i}t\Phi(\xi,\eta,\sigma)}
\widehat{f_{k_1}^+}(\xi-\eta,t)\widehat{f_{k_2}^+}(\eta-\sigma,t)\widehat{f_{k_3}^-}(\sigma,t)\, d \eta d \sigma.
\end{aligned}
\end{equation}

For the proof of \eqref{9}, it suffices to show that for \(m\in \{1,2,\dots\}\) it holds
\begin{align}\label{12}
\left\|(|\xi|^{(1-\alpha)/4}+|\xi|^{10})\big[g(\xi,t_2)-g(\xi,t_1)\big]\right\|_{L^\infty_\xi}\lesssim\epsilon_02^{-p_0m},
\end{align}
for any \(t_1\leq t_2\in [2^m-1,2^{m+1}]\cap[1,T]\).
Arguing as Subsection \ref{reduction:fKdV}, one can first reduce the proof of \eqref{9} on the frequency \(|\xi|\in[2^k,2^{k+1}]\) for all \(k\in\mathbb{Z}\) to \(k\in [-10p_0m/(1-\alpha),p_0m]\cap \mathbb{Z}\).

Recalling the notation in \eqref{notation:positive-negative}, 
from the a priori assumptions \eqref{l8} and the localization, it follows that
\begin{equation}\label{13}
\begin{aligned}
&\|\widehat{f_l}(s)\|_{L^2}\lesssim \epsilon_12^{p_0m}\min(2^{-N_0l_+},2^{l/2}),\\
&\|\partial\widehat{f_l}(s)\|_{L^2}\lesssim \epsilon_12^{p_0m}\min\big[2^{-l},\max(2^{-l/2},1)\big],\\
&\|\widehat{f_l}(s)\|_{L^\infty}\lesssim \epsilon_12^{l_{\pm}},\\
&\|e^{-\mathrm{i}s|D|^{\alpha+1}}f_l(s)\|_{L^\infty}\lesssim \epsilon_12^{-m/2},
\end{aligned}
\end{equation}
for any  \(s\in[2^m-1,2^{m+1}]\cap[1,T]\) and any \(l\in\mathbb{Z}\).
Keeping \(k\in [-10p_0m/(1-\alpha),p_0m]\cap \mathbb{Z}\) in mind,  
and repeating the argument of Subsection \ref{reduction:fKdV} via \eqref{13}, one may further reduce \eqref{9} to the following proposition:

\begin{proposition}\label{pr:reduce}
	Assume that \(k,k_1,k_2,k_3\in\mathbb{Z}\), \(m\in\mathbb{Z}\cap[100,\infty)\), \(|\xi|\in[2^k,2^{k+1}]\), and \(t_1\leq t_2\in [2^m-1,2^{m+1}]\cap [1,T]\). If
	\begin{equation}\label{14}
	\begin{aligned}
	&k\in [-m/100,p_0m],\\
	&k_1,k_2,k_3\in[-4m,p_0m/10],\\
	&\min(k_1,k_2,k_3)+\mathrm{med}(k_1,k_2,k_3)\geq -(1+10p_0)m,
	\end{aligned} 
	\end{equation}
	then 
	\begin{equation}\label{15}
	\begin{aligned}
	&\bigg|\int_{t_1}^{t_2}e^{\mathrm{i}L(\xi,s)}\big[ J_{k_1,k_2,k_3}(\xi,s)-\tilde{\tilde{c}}s^{-1}|\xi|^{1-\alpha}\widehat{f_{k_1}}(\xi,s)\widehat{f_{k_2}}(\xi,s)\widehat{f_{k_3}}(-\xi,s)\big]\, d s\bigg|\\
	&\quad\quad\quad\quad\quad\quad\quad\quad\quad\quad\quad\quad\quad\quad\quad\lesssim
	\epsilon_1^32^{-p_0m}2^{-10k_+},
	\end{aligned}
	\end{equation}
	where \(\tilde{\tilde{c}}:=2\pi/[|\alpha|(\alpha+1)]\).
\end{proposition}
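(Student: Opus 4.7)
My plan is to mirror the strategy used for Proposition \ref{lpr:3} in the fKdV case, exploiting the parallel structure between the two equations that the authors already emphasized. First, I would split the proof into a \emph{non-resonant regime} $\max(|k_1-k|,|k_2-k|,|k_3-k|)\ge 21$ and a \emph{resonant regime} $k_1,k_2,k_3\in [k-20,k+20]$. In the non-resonant regime I would further isolate the subcase $\min(k_1,k_2,k_3)\ge -(1-20p_0)m$ where a purely spatial integration by parts suffices, and the complementary subcase where at least one output frequency is very small and requires more care.

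For the non-resonant regime, using $\partial_\eta\Phi=-(\alpha+1)\bigl(|\xi-\eta|^\alpha\mathrm{sgn}(\xi-\eta)-|\eta-\sigma|^\alpha\mathrm{sgn}(\eta-\sigma)\bigr)$ (and the symmetric formula for $\partial_\sigma\Phi$), I would integrate by parts in $\eta$ (or $\sigma$) after inserting a cutoff selecting the dominant frequencies. On the support of the integrand the lower bound $|\partial_\eta\Phi|\gtrsim 2^{\alpha\max(k_i,k_j)}$ for $\alpha\in(0,1)$ and $|\partial_\eta\Phi|\gtrsim 2^{\alpha\min(k_i,k_j)}$ for $\alpha\in(-1,0)$ controls the resulting multipliers $m_1$ and $\partial_\eta m_1$ in $L^1$. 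Combining the bounds \eqref{13} on $\widehat{f_l}$, $\partial\widehat{f_l}$, $\widehat{f_l}$ in $L^\infty$, and the linear decay of $e^{-\mathrm{i}s|D|^{\alpha+1}}f_l$, together with the bilinear estimate \eqref{ap2}, I expect to recover the gain $\epsilon_1^3 2^{-m}2^{-3p_0m}2^{-10k_+}$ after summation over $(k_1,k_2,k_3)$; this is exactly the scheme carried out in the fKdV lemmas.

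For the resonant regime $k_1,k_2,k_3\in[k-20,k+20]$, I would insert the sign decomposition $J_{k_1,k_2,k_3}=\sum_{\iota_1,\iota_2,\iota_3}J^{\iota_1,\iota_2,\iota_3}_{k_1,k_2,k_3}$ obtained by separating positive and negative frequencies in each factor. For every sign configuration except the space-time resonant one $(\iota_1,\iota_2,\iota_3)=(+,+,-)$ (with $\xi>0$, say), one checks, using the inequalities \eqref{key inequality-1}–\eqref{key inequality-2} adapted to the dispersion $|\xi|^{\alpha+1}$, that $|\Phi|\gtrsim 2^{(\alpha+1)k}$. Then I would integrate by parts in $s$ exactly as in \eqref{l36}–\eqref{l37}: the boundary terms $A_j$ and the time-derivative terms $B_1,B_2,B_3$ (expressed via \eqref{8}) and the $B_4$ term coming from $\partial_s L$ all gain a factor $2^{-m/4}2^{-10k_+}$ after using $\|\partial_s\widehat{f_l}(s)\|_{L^2}\lesssim \epsilon_1^3 2^{-m}2^{3p_0m}$ and the multiplier bounds induced by $|\Phi|\gtrsim 2^{(\alpha+1)k}$.

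The hard part will be the fully resonant sign $(+,+,-)$ (with its symmetric partners if $\xi<0$), where the phase vanishes on the set $\{\eta=0,\ \sigma=\xi\}$ and one cannot integrate by parts in either $s$, $\eta$, or $\sigma$. Here I would perform a careful stationary-phase analysis: localize $(\eta,\sigma)$ in a neighborhood of $(0,\xi)$ of scale $\sim s^{-1/2}2^{-\alpha k/2}$, write $\Phi(\xi,\eta,\sigma)=\tfrac{1}{2}Q(\xi)(\eta,\sigma-\xi)+O(|\eta|^3+|\sigma-\xi|^3)$ where $Q(\xi)$ is the Hessian of $\Phi$ at the critical point, and evaluate the Gaussian integral to extract the principal contribution $\tilde{\tilde c}\,s^{-1}|\xi|^{1-\alpha}\widehat{f_{k_1}}(\xi)\widehat{f_{k_2}}(\xi)\widehat{f_{k_3}}(-\xi)$ with $\tilde{\tilde c}=2\pi/[|\alpha|(\alpha+1)]$. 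The remainder is controlled by estimating the integral outside this stationary-phase window (using $|\partial_\eta\Phi|$ and $|\partial_\sigma\Phi|$ away from the critical point) and by controlling the contribution of $\partial_\eta\widehat{f}$ and $\partial_\sigma\widehat{f}$ against the $H^{1,1}$ norm, together with the Hölder modulus of continuity of $\widehat{f}$ in $\xi$ implied by the $Z$-norm and $H^{1,1}$ bounds. This is precisely where the authors defer to their fKdV argument (Lemma \ref{le:2} and \cite[Theorem 4.3]{SW1}); after checking that the Hessian determinant computation gives exactly $\tilde{\tilde c}$, the estimate \eqref{15} follows, completing the proof.
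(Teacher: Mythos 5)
Your non-resonant regime treatment matches the paper's (spatial integration by parts when $\min(k_1,k_2,k_3)\geq-(1-20p_0)m$; lower bounds on $\tilde\Phi$ followed by time integration by parts otherwise). The gap is in the resonant regime. You propose to import the fKdV strategy of decomposing by signs and, for every non-$(+,+,-)$ configuration, asserting $|\Phi|\gtrsim 2^{(\alpha+1)k}$ so that a time integration by parts applies. This lower bound is false for the fNLS phase. The $\widehat{\bar f}$ factor in \eqref{8} makes the phase $\Phi=\xi^{\alpha+1}-(\xi-\eta)^{\alpha+1}-(\eta-\sigma)^{\alpha+1}+\sigma^{\alpha+1}$, whose final $+$ sign is opposite to the fKdV pattern, so \eqref{key inequality-1}--\eqref{key inequality-2} do not transfer. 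For instance, in the $(+,+,+)$ configuration (all of $a:=\xi-\eta$, $b:=\eta-\sigma$, $c:=\sigma$ positive, $a+b+c=\xi$) with $\alpha<0$ and $p:=\alpha+1\in(0,1)$, the quantity $\Phi=\xi^{p}-a^p-b^p+c^p$ is negative at $c=0$ (since $a^p+b^p\geq(a+b)^p$) and positive once $c$ is comparable to $a+b$, so it vanishes on a curve lying inside the block $k_1,k_2,k_3\in[k-20,k+20]$ (e.g.\ near $(a,b,c)\approx(0.5,0.39,0.11)\,\xi$ when $p=1/2$). At those points $\partial_\sigma\Phi=(\alpha+1)[(\eta-\sigma)^\alpha+\sigma^\alpha]>0$, so they are pure time resonances where only a spatial integration by parts is available; integrating by parts in $s$ produces $1/\Phi$ with no usable multiplier bound, and your scheme breaks down.

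The paper circumvents this by not using any sign decomposition in the resonant regime. After the change of variables $\eta\mapsto-\eta$, $\sigma\mapsto-(\xi+\eta+\sigma)$, which moves the unique stationary point of $\tilde\Phi$ to $(0,0)$, it performs a dyadic decomposition $J_{k_1,k_2,k_3}=\sum_{l_1,l_2\geq\bar l}J_{l_1,l_2}$ at the threshold $2^{\bar l}\approx 2^{(1-\alpha)k/2}2^{-49m/100}$ (your window $s^{-1/2}2^{-\alpha k/2}$ is off by $2^{k/2}$: the Hessian is $\alpha(\alpha+1)|\xi|^{\alpha-1}$, giving scale $s^{-1/2}|\xi|^{(1-\alpha)/2}$). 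For $\max(l_1,l_2)>\bar l$ it integrates by parts in $\eta$ using $|\partial_\eta\tilde\Phi|\gtrsim 2^{l_2}2^{(\alpha-1)k}$, which is immune to the vanishing of $\tilde\Phi$ itself and covers all sign configurations simultaneously; only on the central block $l_1=l_2=\bar l$ does it Taylor-expand $\tilde\Phi\approx\alpha(\alpha+1)|\xi|^{\alpha-1}\eta\sigma$ and evaluate a Gaussian to produce $\tilde{\tilde c}\,s^{-1}|\xi|^{1-\alpha}\widehat{f_{k_1}}(\xi)\widehat{f_{k_2}}(\xi)\widehat{f_{k_3}}(-\xi)$. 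Your final stationary-phase step agrees with this last piece, but the argument surrounding it must be a scale decomposition of this sort with spatial integration by parts on the coarse scales, not a sign decomposition with time integration by parts.
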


\subsubsection{Proof of Proposition \ref{pr:reduce}}

To prove Proposition \ref{pr:reduce}, we also split the frequencies into the non-resonant regime and resonant regime and use different techniques to attack it respectively.  Performing the change of variables \(\eta\mapsto-\eta\) and \(\sigma\mapsto-(\xi+\eta+\sigma)\), we may rewrite 
\begin{equation*}
\begin{aligned}
J_{k_1,k_,2,k_3}(\xi,s)&=\int_{\mathbb{R}^2}e^{\mathrm{i}s\tilde{\Phi}(\xi,\eta,\sigma)}\widehat{f_{k_1}^+}(\xi+\eta,s)
\widehat{f_{k_2}^+}(\xi+\sigma,s)\\
&\quad\times\widehat{f_{k_3}^-}
(-\xi-\eta-\sigma,s)\, d \eta d \sigma,
\end{aligned}
\end{equation*} 
where the new phase \(\tilde{\Phi}\) is given by
\begin{equation*}
\begin{aligned}
\tilde{\Phi}(\xi,\eta,\sigma)=|\xi|^{\alpha+1}-|\xi+\eta|^{\alpha+1}-|\xi+\sigma|^{\alpha+1}+|\xi+\eta+\sigma|^{\alpha+1}.
\end{aligned}
\end{equation*}  

\bigskip
\noindent{\bf{\emph{Non-resonant regime}: \(\max(|k_1-k|,|k_2-k|,|k_3-k|)\geq 21\).}} 

\begin{lemma}  If
	\begin{equation}\label{17}
	\begin{aligned}
	\min(k_1,k_2,k_3)\geq -(1-20p_0)m,
	\end{aligned}
	\end{equation}
	then the bound \eqref{15} holds.
\end{lemma}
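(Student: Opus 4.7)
The plan is to mirror, line by line, the two--case non-resonant argument that was already carried out for equation \eqref{eq:main-1} in the preceding section, transported to the phase
\[
\Phi(\xi,\eta,\sigma)=|\xi|^{\alpha+1}-|\xi-\eta|^{\alpha+1}-|\eta-\sigma|^{\alpha+1}+|\sigma|^{\alpha+1}
\]
appearing in \eqref{8}--\eqref{10}. As in the fKdV case, it suffices to establish the pointwise bound
\[
|J_{k_1,k_2,k_3}(\xi,s)|\lesssim \varepsilon_1^3\,2^{-m}2^{-3p_0m}2^{-10k_+},
\]
for any $s\in[t_1,t_2]$, since after integration over a time interval of length $\lesssim 2^m$ this is stronger than \eqref{15}. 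The correction term on the right-hand side of \eqref{15} plays no role here because $\max(|k_1-k|,|k_2-k|,|k_3-k|)\ge 21$, and this truncation alone already makes the correction harmless (it is supported where $\max|k_i-k|\le 4$).

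I would then split into the same two subcases. In Case 1, $\max(|k_1-k_2|,|k_1-k_3|,|k_2-k_3|)\ge 5$, and by the non-resonance assumption together with symmetry we may assume $|k_1-k_2|\ge 5$ and $\max(k_1,k_2)\ge k-20$. On the support of the integral, $|\xi-\eta|\sim 2^{k_1}$ and $|\eta-\sigma|\sim 2^{k_2}$, so
\[
|\partial_\eta\Phi(\xi,\eta,\sigma)|=(\alpha+1)\bigl||\xi-\eta|^{\alpha-1}(\xi-\eta)-|\eta-\sigma|^{\alpha-1}(\eta-\sigma)\bigr|\gtrsim 2^{\alpha\max(k_1,k_2)}
\]
when $\alpha\in(0,1)$, and the analogous lower bound $\gtrsim 2^{\alpha\min(k_1,k_2)}$ when $\alpha\in(-1,0)$. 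Integrating by parts in $\eta$ produces three terms $G_1,G_2,G_3$ of exactly the same structure as $F_1,F_2,F_3$ in \eqref{l27}, with multipliers
\[
\tilde m_1(\eta,\sigma)=\frac{1}{s\,\partial_\eta\Phi(\xi,\eta,\sigma)}\varphi'_{k_1}(\xi-\eta)\varphi'_{k_2}(\eta-\sigma)\varphi'_{k_3}(\sigma),
\]
and $\partial_\eta\tilde m_1$. I would estimate $\|\mathcal F^{-1}(\tilde m_1)\|_{L^1}$ and $\|\mathcal F^{-1}(\partial_\eta\tilde m_1)\|_{L^1}$ exactly as in \eqref{l28}--\eqref{l29}, then apply the bilinear estimate \eqref{ap2} together with the a priori bounds \eqref{13}, placing the factor carrying the $\eta$-derivative of $\widehat f$ in $L^2$, one remaining factor in $L^2$, and the third factor (a free solution via its linear propagator) in $L^\infty$ using the last line of \eqref{13}. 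The resulting gain $t^{-1/2}$ from the $L^\infty$ placement, combined with $2^{-m}$ from the integration by parts, yields the desired $2^{-m}2^{-3p_0m}2^{-10k_+}$ after noting that $2^{-\alpha\min(k_1,k_2)}\lesssim 2^{|\alpha|(1-20p_0)m}$ for $\alpha<0$ by \eqref{17}, which is absorbed because $p_0\le 10^{-3}|\alpha|$.

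In Case 2, $\max(|k_1-k_2|,|k_1-k_3|,|k_2-k_3|)\le 4$, so $2^{k_1}\sim 2^{k_2}\sim 2^{k_3}$ but the non-resonance condition $\max(|k_j-k|)\ge 21$ forces $2^k\ll 2^{k_1}$ or $2^k\gg 2^{k_1}$; in either event one of $\partial_\eta\Phi$ or $\partial_\sigma\Phi$ is bounded below by $2^{\alpha k_2}$, and I would integrate by parts in that variable, bounding the multipliers as in \eqref{l32}--\eqref{l33}. The trilinear bound \eqref{ap2}, used again with one $L^\infty$ factor on a propagated profile, produces a power of $2^{-(\alpha+1)k_2}$; combined with the external $\xi$ factor $\sim 2^k$ and the obvious inequality $-(\alpha+1)k_2\le -(\alpha+1)k+C$, this closes the estimate. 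The only real obstacle is bookkeeping: making sure that in both sign ranges of $\alpha$ the negative powers of the smallest frequency that appear are controlled by the standing hypothesis $\min(k_1,k_2,k_3)\ge -(1-20p_0)m$, and that the constraints on $p_0$ in Theorem \ref{th:main-2} leave a definite $2^{-3p_0m}$ surplus after all losses are collected. This is precisely the calculation carried out in the fKdV case and transfers without change.
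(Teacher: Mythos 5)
Your plan follows the paper's general strategy, but there are two issues worth flagging, one cosmetic and one substantive.

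First, the cosmetic one: you work with the phase $\Phi(\xi,\eta,\sigma)=|\xi|^{\alpha+1}-|\xi-\eta|^{\alpha+1}-|\eta-\sigma|^{\alpha+1}+|\sigma|^{\alpha+1}$ in the original variables, whereas the paper first performs the substitution $\eta\mapsto-\eta$, $\sigma\mapsto-(\xi+\eta+\sigma)$, after which the new phase $\tilde\Phi$ is manifestly symmetric under $\eta\leftrightarrow\sigma$ (i.e.\ $k_1\leftrightarrow k_2$). With your variables $\partial_\eta\Phi$ pairs $(k_1,k_2)$ and $\partial_\sigma\Phi$ pairs $(k_2,k_3)$, so the reduction ``by symmetry to $|k_1-k_2|\ge5$'' does not cover the configuration where only $|k_1-k_3|\ge5$. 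In that configuration all three $k_j$ lie within a band of width $8$, so it is in fact absorbed by your Case~2 computation; but as written the case split has a small gap (the paper's write-up has the analogous gap with its own labeling, so this is not fatal, merely something to clean up).

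The substantive problem is the absorption step for $\alpha\in(-1,0)$. You claim to control $2^{-\alpha\min(k_1,k_2)}$ using the lower bound \eqref{17} on $\min(k_1,k_2,k_3)$, writing $2^{-\alpha\min(k_1,k_2)}\lesssim 2^{|\alpha|(1-20p_0)m}$ and asserting that this is ``absorbed because $p_0\le 10^{-3}|\alpha|$.'' This is the wrong inequality in the wrong direction. Since $-\alpha>0$, the quantity $2^{-\alpha\min(k_1,k_2)}=2^{|\alpha|\min(k_1,k_2)}$ is monotone \emph{increasing} in $\min(k_1,k_2)$, so a lower bound on $\min(k_1,k_2)$ gives you nothing. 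What bounds this factor is the \emph{upper} bound $\min(k_1,k_2)\le p_0 m/10$ coming from \eqref{14}, yielding the harmless $2^{-\alpha\min(k_1,k_2)}\lesssim 2^{|\alpha|p_0m/10}\le 2^{p_0m/10}$, exactly as the paper states. If one actually used your stated bound $2^{|\alpha|(1-20p_0)m}$ it would be an exponentially growing loss on the order of $2^{|\alpha|m}$, and the available time decay $2^{-m/2}$ (beyond the $2^{-m}$ factor) cannot absorb $2^{|\alpha|m}$ when $|\alpha|>1/2$ — in which case the constraint $p_0\le10^{-3}|\alpha|$ is irrelevant. The hypothesis \eqref{17} is genuinely needed in this lemma, but only to control the $\alpha$-independent losses $2^{-\min(k_1,k_2)}$ and $2^{-\min(k_1,k_2)/2}$ produced by $\partial_\eta\tilde m_1$ and by the weighted $L^2$ bound $\|\partial\widehat{f_l}\|_{L^2}\lesssim 2^{p_0m}2^{-l/2}$, which can be as large as $2^{(1-20p_0)m}$ and are offset by the full $2^{-3m/2}$ decay. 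Conflating these two loss factors is where the argument goes wrong, and it needs to be corrected before the bookkeeping closes.
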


\begin{proof}  It suffices to show
	\begin{equation*}
	\begin{aligned}
	|J_{k_1,k_,2,k_3}(\xi,s)|
	\lesssim \epsilon_1^32^{-m}2^{-3p_0m}2^{-10k_+}.
	\end{aligned}
	\end{equation*}

	\noindent{\bf{Case 1: \(\max(|k_1-k_2|,|k_1-k_3|,|k_2-k_3|)\geq 5\).}}
	By symmetry, we may assume that \(|k_1-k_3|\geq5\) and \(\max(k_1,k_3)\geq k-20\). We first consider the case of \(\alpha\in(0,1)\). 	
	Integrating by parts in \(\eta\) gives
	\begin{align}\label{18}
	|J_{k_1,k_,2,k_3}(\xi,s)|\leq |G_1(\xi,s)|+|G_2(\xi,s)|+|G_3(\xi,s)|,
	\end{align}
	where 
	\begin{equation*}
	\begin{aligned}
	G_1(\xi,s)&=\int_{\mathbb{R}^2}m_4(\eta,\sigma)e^{\mathrm{i}s\tilde{\Phi}(\xi,\eta,\sigma)}\partial_\eta\widehat{f_{k_1}^+}(\xi+\eta,s)
	\widehat{f_{k_2}^+}(\xi+\sigma,s)\\
	&\qquad\qquad\qquad\qquad\qquad\times\widehat{f_{k_3}^-}
	(-\xi-\eta-\sigma,s)\, d \eta d \sigma,\\
	G_2(\xi,s)&=\int_{\mathbb{R}^2} m_4(\eta,\sigma)e^{\mathrm{i}s\tilde{\Phi}(\xi,\eta,\sigma)}\widehat{f_{k_1}^+}(\xi+\eta,s)
	\widehat{f_{k_2}^+}(\xi+\sigma,s)\\
	&\qquad\qquad\qquad\qquad\qquad\times\partial_\eta\widehat{f_{k_3}^-}
	(-\xi-\eta-\sigma,s)\, d \eta d \sigma,\\
	G_3(\xi,s)&=\int_{\mathbb{R}^2}\partial_\eta m_4(\eta,\sigma)e^{\mathrm{i}s\tilde{\Phi}(\xi,\eta,\sigma)}\widehat{f_{k_1}^+}(\xi+\eta,s)
	\widehat{f_{k_2}^+}(\xi+\sigma,s)\\
	&\qquad\qquad\qquad\qquad\qquad\times\widehat{f_{k_3}^-}
	(-\xi-\eta-\sigma,s)\, d \eta d \sigma,\\
	\end{aligned}
	\end{equation*}
	in which 
	\begin{align}\label{19}
	m_4(\eta,\sigma):=\frac{1}{s\partial_\eta\tilde{\Phi}(\xi,\eta,\sigma)}\varphi_{k_1}^\prime(\xi+\eta)\varphi_{k_2}^\prime(\xi+\eta)\varphi_{k_3}^\prime(\xi+\eta+\sigma).
	\end{align}
	
	One first observes that
	\begin{equation}\label{20}
	\begin{aligned}
	|\partial_\eta\tilde{\Phi}(\xi,\eta,\sigma)|
	&=(\alpha+1)\big|
	|\xi+\eta|^{\alpha-1}(\xi+\eta)-|\xi+\eta+\sigma|^{\alpha-1}(\xi+\eta+\sigma)\big|\\
	&\gtrsim 2^{\alpha\max(k_1,k_3)}\gtrsim 2^{\alpha k},
	\end{aligned}
	\end{equation}
	on the support of the integral.  
	From \eqref{19} and \eqref{20}, it follows that 
	\begin{equation}\label{21}
	\begin{aligned}
	\|\mathcal{F}^{-1}(m_4)\|_{L^1}\lesssim 2^{-m}2^{-\alpha k},
	\end{aligned}
	\end{equation}
	and
	\begin{equation}\label{22}
	\begin{aligned}
	\|\mathcal{F}^{-1}(\partial_\eta m_4)\|_{L^1}\lesssim 2^{-m}2^{-\alpha k}2^{-\min(k_1,k_3)}.
	\end{aligned}
	\end{equation}
	To estimate \(G_1(\xi,s)\), we define
	\begin{equation*}
	\begin{aligned}
	&\widehat{g_1}(\eta):=e^{-\mathrm{i}s|\xi+\eta|^{(\alpha+1)}}\partial_\eta\widehat{f_{k_1}^+}(\xi+\eta,s),\\
	&\widehat{g_2}(\sigma):=e^{-\mathrm{i}s|\xi+\sigma|^{(\alpha+1)}}\widehat{f_{k_2}^+}(\xi+\sigma,s),\\
	&\widehat{g_3}(\theta):=e^{\mathrm{i}s|\xi-\sigma|^{(\alpha+1)}}\widehat{f_{k_3}^-}(-\xi+\theta,s),
	\end{aligned}
	\end{equation*} 	
and then have via \eqref{13}
	\begin{equation*}
	\begin{aligned}
	\|g_1\|_{L^2}\lesssim \epsilon_12^{p_0m}\max(2^{-k_1/2},1)
	,\ \|g_2\|_{L^2}
	\lesssim \epsilon_12^{p_0m}2^{-N_0{k_2}_+},\
	\|g_3\|_{L^\infty}\lesssim \epsilon_12^{-m/2},
	\end{aligned}
	\end{equation*}
This combines with \eqref{21} to give
	\begin{align*}
	|G_1(\xi,s)|
	\lesssim\epsilon_1^32^{-\alpha k}\max(2^{-k_1/2},1)2^{(2p_0-3/2)m}2^{-N_0\max({k_2}_+,{k_3}_+)}.
	\end{align*}
One similarly has
	\begin{align*}
	|G_2(\xi,s)|\lesssim\epsilon_1^32^{-\alpha k}\max(2^{-k_2/2},1)2^{(2p_0-3/2)m}2^{-N_0\max({k_1}_+,{k_3}_+)}.
	\end{align*}
	For \(G_3(\xi,s)\), we let
	\begin{equation*}
	\begin{aligned}
	&\widehat{g_1}(\eta):=e^{-\mathrm{i}s|\xi+\eta|^{(\alpha+1)}}\widehat{f_{k_1}
		^+}(\xi+\eta,s),\\
	&\widehat{g_2}(\sigma):=e^{-\mathrm{i}s|\xi+\sigma|^{(\alpha+1)}}\widehat{f_{k_2}^+}(\xi+\sigma,s),\\
	&\widehat{g_3}(\theta):=e^{\mathrm{i}s|\xi-\sigma|^{(\alpha+1)}}\widehat{f_{k_3}^-}(-\xi+\theta,s),
	\end{aligned}
	\end{equation*}
 and then estimate by \eqref{13}
	\begin{equation*}
	\begin{aligned}
	\|g_1\|_{L^2}\lesssim \epsilon_12^{p_0m}2^{k_1/2}
	,\ \|g_2\|_{L^2}
	\lesssim \epsilon_12^{p_0m}2^{-N_0{k_2}_+},\
	\|g_3\|_{L^\infty}\lesssim \epsilon_12^{-m/2}.
	\end{aligned}
	\end{equation*}
This together with \eqref{22} leads to 
	\begin{align*}
	|G_3(\xi,s)|
	\lesssim \epsilon_1^32^{-\alpha k}2^{-\min(k_1,k_3)/2}2^{(2p_0-3/2)m}2^{-N_0{k_2}_+}.
	\end{align*}
We conclude that
	\begin{equation*}
	\begin{aligned}
	&\quad|G_1(\xi,s)|+|G_2(\xi,s)|+|G_3(\xi,s)|\\
	&\lesssim \epsilon_1^32^{-\alpha k}\max(2^{-\min(k_1,k_2)/2},1)2^{(2p_0-3/2)m}2^{-10k_+}
	(2^{10\max({k_1}_+,{k_2}_+,{k_3}_+)}+1)\\
	&\lesssim\epsilon_1^32^{-m}2^{-5p_0m}2^{-10k_+},
	\end{aligned}
	\end{equation*}
where in the last line we used \eqref{14} and \eqref{17}.
	
	We next consider the case of \(\alpha\in(-1,0)\).  It is easy to see  that 
	\begin{equation*}
	\begin{aligned}
	|\partial_\eta\tilde{\Phi}(\xi,\eta,\sigma)|
	\gtrsim 2^{\alpha\min(k_1,k_3)},
	\end{aligned}
	\end{equation*}
on the support of the integral. 
Therefore we have
	\begin{equation*}
	\begin{aligned}
	\|\mathcal{F}^{-1}(m_4)\|_{L^1}\lesssim 2^{-m}2^{-\alpha\min(k_1,k_3)},
	\end{aligned}
	\end{equation*}
	and
	\begin{equation*}
	\begin{aligned}
	\|\mathcal{F}^{-1}(\partial_\eta m_4)\|_{L^1}\lesssim 2^{-m}2^{-\alpha \min(k_1,k_3)}2^{-\min(k_1,k_3)}.
	\end{aligned}
	\end{equation*}
	The term \(2^{-\alpha\min(k_1,k_3)}\) contributes an acceptable bound \(2^{p_0m/10}\) due to \(\alpha\in(-1,0)\) and \eqref{14}. Then the desired result follows repeating the argument of \(\alpha\in(0,1)\).  	

\bigskip
   \noindent{\bf{Case 2: \(\max(|k_1-k_2|,|k_1-k_3|,|k_2-k_3|)\leq 4\).}}
	In this case  \(\partial_\eta\tilde{\Phi}\neq0\) or \(\partial_\sigma\tilde{\Phi}\neq0\), and we will assume \(\partial_\eta\tilde{\Phi}\neq0\) without loss of generality. 
	Recalling \(2^{k_1}\approx 2^{k_2}\approx 2^{k_3}\) and using the properties of the support of the integral, one obtains
	\begin{equation*}
	\begin{aligned}
	|\partial_\eta\tilde{\Phi}(\xi,\eta,\sigma)|
	&=(\alpha+1)\big||\xi+\eta|^{\alpha-1}(\xi+\eta)-|\xi+\eta+\sigma|^{\alpha-1}(\xi+\eta+\sigma)\big|\\
	&\gtrsim 2^{\alpha k_2},
	\end{aligned}
	\end{equation*}
which gives rise to
	\begin{equation}\label{24}
	\begin{aligned}
	\|\mathcal{F}^{-1}(m_4)\|_{L^1}\lesssim 2^{-m}2^{-\alpha k_2},
	\end{aligned}
	\end{equation}
	and
	\begin{equation}\label{25}
	\begin{aligned}
	\|\mathcal{F}^{-1}(\partial_\eta m_4)\|_{L^1}\lesssim 2^{-m}2^{-(\alpha+1)k_2}.
	\end{aligned}
	\end{equation}
Using the bounds \eqref{24}	and \eqref{25}, and performing a similar manipulation as {\bf{Case 1}}, 
we may get
	\begin{equation*}
	\begin{aligned}
	|F_1(\xi,s)|+|F_2(\xi,s)|+|F_3(\xi,s)|
	&\lesssim\epsilon_1^32^{-(\alpha+1)k_2}2^{(2p_0-3/2)m}2^{-N_0{k_2}_+}\\
	&\lesssim \epsilon_1^32^{-m}2^{-5p_0m}2^{-10k_+},
	\end{aligned}
	\end{equation*}
in which we used \eqref{14} and \(-(\alpha+1)k_2\leq -(\alpha+1)k+14(\alpha+1)\).

\end{proof}

\begin{lemma} If
	\begin{equation}\label{26}
	\begin{aligned}
	\min(k_1,k_2,k_3)\leq -(1-20p_0)m,
	\end{aligned}
	\end{equation}
	then the bound \eqref{15} holds.
\end{lemma}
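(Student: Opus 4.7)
The plan is to mirror the fKdV argument from the corresponding lemma (the case $\min(k_1,k_2,k_3)\leq -(1-20p_0)m$) essentially step-by-step, with the phase $\tilde{\Phi}$ replacing $\Phi$ and the multiplier structure adapted to the fNLS representation. Without loss of generality assume $\xi>0$ and decompose
\begin{equation*}
J_{k_1,k_2,k_3}(\xi,s)=\sum_{\iota_1,\iota_2,\iota_3\in\{+,-\}}J_{k_1,k_2,k_3}^{\iota_1,\iota_2,\iota_3}(\xi,s),
\end{equation*}
where each piece is obtained by inserting the indicators $1_{\iota_1}(\xi+\eta)$, $1_{\iota_2}(\xi+\sigma)$, $1_{\iota_3}(-\xi-\eta-\sigma)$ under the integral. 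As in the fKdV analysis, the case $(-,-,-)$ vanishes because the three sign constraints are incompatible with $\xi>0$, so only seven cases remain.

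For each surviving sign combination the next step is to prove a lower bound on the phase,
\begin{equation*}
|\tilde{\Phi}(\xi,\eta,\sigma)|\gtrsim\min\bigl(2^{-30(\alpha+1)p_0 m},\,2^{-(\alpha+1)m/100}\bigr),
\end{equation*}
entirely analogous to \eqref{l35}, \eqref{l39}, \eqref{l40}, \eqref{l41}. The case with all signs aligned with $\xi$ reduces, via the two elementary convexity inequalities \eqref{key inequality-1}-\eqref{key inequality-2}, to $|\tilde{\Phi}|\gtrsim 2^{(\alpha+1)\mathrm{med}(k_1,k_2,k_3)}$, and the mixed-sign cases split further into three sub-cases according to which of $k_1,k_2,k_3$ realizes the minimum, exactly as in the fKdV argument.

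With the phase bound in hand, I would integrate by parts in $s$ to write
\begin{equation*}
\int_{t_1}^{t_2}e^{\mathrm{i}L(\xi,s)}J_{k_1,k_2,k_3}^{\iota_1,\iota_2,\iota_3}(\xi,s)\,ds=\sum_{j=1}^2 A_j(\xi,t_j)+\int_{t_1}^{t_2}\bigl(B_1+B_2+B_3+B_4\bigr)(\xi,s)\,ds,
\end{equation*}
where $B_1,B_2,B_3$ carry a factor $\partial_s\widehat{f_{k_j}}$ and $B_4$ carries the factor $\partial_sL(\xi,s)$; the relevant multiplier is
\begin{equation*}
m_2(\eta,\sigma):=\frac{1}{\tilde{\Phi}(\xi,\eta,\sigma)}\varphi_{k_1}^\prime(\xi+\eta)\varphi_{k_2}^\prime(\xi+\sigma)\varphi_{k_3}^\prime(-\xi-\eta-\sigma),
\end{equation*}
whose inverse Fourier transform has $L^1$ norm controlled by the reciprocal of the phase lower bound. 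Using the a priori estimates \eqref{13} (of the same form as \eqref{l18}), the definition \eqref{def:L} to bound $\partial_s L$, the $L^2$ bound $\|\partial_s\widehat{f_l}(s)\|_{L^2}\lesssim \epsilon_1^3 2^{-m}2^{3p_0 m}2^{-10l_+}$ read off from \eqref{8}, and the trilinear/bilinear pairing \eqref{ap2}, each of $A_j$ and each of the four interior terms is estimated by $\epsilon_1^3 2^{-m/4}2^{-10k_+}$, which after the $s$-integration of length $\lesssim 2^m$ yields the claimed bound \eqref{15}.

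The main obstacle will be establishing the phase lower bound in the mixed-sign sub-cases, where $\tilde{\Phi}$ involves cancellations between $|\xi|^{\alpha+1}$, $|\xi+\eta|^{\alpha+1}$, $|\xi+\sigma|^{\alpha+1}$, $|\xi+\eta+\sigma|^{\alpha+1}$ with opposing signs. One has to split the scenario according to which index realizes $\min(k_1,k_2,k_3)\leq -(1-20p_0)m$ and exploit that the other two frequencies are of comparable size (using \eqref{14}), exactly in the spirit of the three-part sub-case analysis performed in \eqref{l40}-\eqref{l41} of the fKdV lemma.
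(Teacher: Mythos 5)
Your proposal differs from the paper's proof in a substantial structural way, and one of its key claims is incorrect.

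The paper does \emph{not} perform a sign decomposition $J_{k_1,k_2,k_3}=\sum_{\iota_1,\iota_2,\iota_3} J^{\iota_1,\iota_2,\iota_3}$ for the fNLS equation. That decomposition is essential in the fKdV argument because the fKdV phase $\Phi=|\xi|^\alpha\xi-|\xi-\eta-\sigma|^\alpha(\xi-\eta-\sigma)-|\eta|^\alpha\eta-|\sigma|^\alpha\sigma$ involves the \emph{odd} functions $|\cdot|^\alpha\cdot$, so its polynomial form changes with the signs of the arguments and each of the eight sign combinations yields a genuinely different phase structure (hence the grouping into Cases 1--3 there). For fNLS, however, the phase $\tilde{\Phi}=|\xi|^{\alpha+1}-|\xi+\eta|^{\alpha+1}-|\xi+\sigma|^{\alpha+1}+|\xi+\eta+\sigma|^{\alpha+1}$ depends only on the absolute values of the four quantities and so has a \emph{fixed} $(+,-,-,+)$ structure that does not vary across your sign cases. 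The decomposition therefore buys nothing: it is precisely because of this observation that the paper's fNLS proof here bypasses the sign analysis entirely and proceeds directly with the two sub-cases (i) $k_3=\min(k_1,k_2,k_3)$, (ii) $k_3\neq\min(k_1,k_2,k_3)$, producing the phase lower bounds \eqref{lower bound-4} and \eqref{lower bound-5} by an argument that mirrors \emph{only} the fKdV Case 3 (the $(+,+,-)$-type pattern, which is the one matching the $(+,-,-,+)$ second-difference structure).

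Beyond the unnecessary duplication, there is a concrete error in your phase-bound reasoning. You claim that the all-positive case ($\xi+\eta\ge 0,\ \xi+\sigma\ge 0,\ -\xi-\eta-\sigma\ge 0$) reduces to the convexity inequalities \eqref{key inequality-1}--\eqref{key inequality-2}. It does not. Writing $b=\xi+\eta$, $c=\xi+\sigma$, $d=-\xi-\eta-\sigma$ (all nonnegative, with $b+c+d=\xi$), the phase becomes
\[
\tilde{\Phi}=(b+c+d)^{\alpha+1}-b^{\alpha+1}-c^{\alpha+1}+d^{\alpha+1},
\]
which carries a $+d^{\alpha+1}$ with the opposite sign to the $-c^{\alpha+1}-(a+b+c)^{\alpha+1}$ pattern of \eqref{key inequality-1}--\eqref{key inequality-2}; those inequalities control $b^{\alpha+1}+c^{\alpha+1}+d^{\alpha+1}-(b+c+d)^{\alpha+1}$, a different quantity. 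A lower bound of the stated form does hold in the regime of the lemma, but deriving it requires the more hands-on regrouping that the paper carries out in \eqref{lower bound-4}--\eqref{lower bound-5}, plus the constraint $\min+\mathrm{med}\ge -(1+10p_0)m$ from \eqref{14} to exclude the near-cancellation $d^{\alpha+1}\approx b^{\alpha+1}$ that would otherwise occur when two of the three frequencies are simultaneously very small. You should drop the sign decomposition and instead argue directly with $|\tilde{\Phi}|$, splitting only according to whether the special slot $k_3$ (the conjugated factor) realizes the minimum, exactly as the paper does.

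Once the phase lower bound is in place, the rest of your plan (integrate by parts in $s$, estimate the boundary terms $\tilde{A}_j$ and the interior pieces $\tilde{B}_1,\dots,\tilde{B}_4$ via \eqref{ap2}, \eqref{13}, \eqref{def:L}, and the $L^2$ bound on $\partial_s\widehat{f_l}$) matches the paper and is fine.
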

\begin{proof} It is enough to show
	\begin{align*}
	\bigg|\int_{t_1}^{t_2}e^{\mathrm{i}L(\xi,s)}J_{k_1,k_2,k_3}(\xi,s)\, d s\bigg|
	\lesssim \varepsilon_1^32^{-2p_0m}2^{-10k_+}.
	\end{align*}
	
Recalling 
	\begin{equation*}
	\begin{aligned}
\tilde{\Phi}(\xi,\eta,\sigma)
=|\xi|^{\alpha+1}-|\xi+\eta|^{\alpha+1}-|\xi+\sigma|^{\alpha+1}+|\xi+\eta+\sigma|^{\alpha+1},
	\end{aligned}
	\end{equation*}
we have the following two sub-cases to consider:\\
	{\bf{(i) \(k_3=\min(k_1,k_2,k_3)\).}} In this case,  one has
	\[
	k_3\in [-(1+20p_0)m,-(1-20p_0)m],\quad k_1,k_2\in [-30p_0m,p_0m/10].
	\]
Hence we have 
	\begin{equation}\label{lower bound-4}
	\begin{aligned}
	-\tilde{\Phi}(\xi,\eta,\sigma)
	&=\big[-|\xi|^{\alpha+1}+|\xi+\eta|^{\alpha+1}
	+|\eta|^{\alpha+1}\big]\\
	&\quad+\big[|\xi+\sigma|^{\alpha+1}-|\eta|^{\alpha+1}\big]
	-|\xi+\eta+\sigma|^{\alpha+1}\\
	&\gtrsim 2^{(\alpha+1)\min(k_1,k_2)}\geq 2^{-30(\alpha+1)p_0m},
	\end{aligned}
	\end{equation}
	on the support of the integral.
	
\noindent {\bf{(ii) \(k_3\neq \min(k_1,k_2,k_3)\).}} 
	In this case, we have
	\[
	k_1\in [-(1+20p_0)m,-(1-20p_0)m],\quad k_2,k_3\in [-30p_0m,p_0m/10].
	\]	
	Observing \(k\gg k_1\), one may estimate 
\begin{equation}\label{lower bound-5}
\begin{aligned}
\quad\tilde{\Phi}(\xi,\eta,\sigma)
&\geq \big[|\xi|^{\alpha+1}+|\xi+\eta+\sigma|^{\alpha+1}-|2\xi+\eta+\sigma|^{\alpha+1}\big]\\
&\quad-|\xi+\eta|^{\alpha+1}
-\big||\xi+\sigma|^{\alpha+1}-|2\xi+\eta+\sigma|^{\alpha+1}\big|\\
&\geq 2^{(\alpha+1)\min(k,k_3)-1}-2^{(\alpha+1)k_1+1}-2^{k_1+\alpha k_2+10}\\
&\gtrsim 2^{(\alpha+1)\min(k,k_3)-2}\geq \min(2^{-30(\alpha+1)p_0m},2^{-(\alpha+1)m/100}),
\end{aligned}
\end{equation}
on the support of the integral.

	We integrate by parts in \(s\) to estimate
	\begin{equation}\label{27}
	\begin{aligned}
	&\quad\bigg|\xi\int_{t_1}^{t_2}\int_{\mathbb{R}^2}e^{\mathrm{i}L(\xi,s)}
	e^{\mathrm{i}s\tilde{\Phi}(\xi,\eta,\sigma)}\widehat{f_{k_1}^+}(\xi+\eta,s)
	\widehat{f_{k_2}^+}(\xi+\sigma,s)\\
	&\qquad\qquad\qquad\qquad\qquad\qquad\times\widehat{f_{k_3}^-}
	(-\xi-\eta-\sigma,s)
	\, d \eta d \sigma d s\bigg|\\
	&\lesssim  
	\sum_{j=1}^2\bigg|\xi\int_{\mathbb{R}^2}\frac{e^{\mathrm{i}L(\xi,t_j)}
		e^{\mathrm{i}t_j\tilde{\Phi}(\xi,\eta,\sigma)}}{\tilde{\Phi}(\xi,\eta,\sigma)}\widehat{f_{k_1}^+}(\xi+\eta,t_j)
	\widehat{f_{k_2}^+}(\xi+\sigma,t_j)\\
	&\qquad\qquad\qquad\qquad\qquad\qquad\times\widehat{f_{k_3}^-}
	(-\xi-\eta-\sigma,t_j)\, d \eta d \sigma\bigg|\\
	&\quad+\int_{t_1}^{t_2}\bigg|\xi\int_{\mathbb{R}^2}\frac{e^{\mathrm{i}s\tilde{\Phi}(\xi,\eta,\sigma)}}{\tilde{\Phi}^(\xi,\eta,\sigma)}\frac{d}{d s}\bigg[e^{\mathrm{i}L(\xi,s)}\widehat{f_{k_1}^+}(\xi+\eta,s)\widehat{f_{k_2}^+}(\xi+\sigma,s)\\
	&\qquad\qquad\qquad\qquad\qquad\qquad\times
	\widehat{f_{k_3}^-}
	(-\xi-\eta-\sigma,s)\bigg]\, d \eta d \sigma \bigg| d s\\
	&=\colon \sum_{j=1}^2\tilde{A}_j(\xi,t_j)+\tilde{B}(\xi).
	\end{aligned}
	\end{equation}
	We break up the differential in \(s\) in \(\tilde{B}(\xi)\) to find that		
	\begin{equation}\label{28}
	\begin{aligned}
	\tilde{B}(\xi)\lesssim 2^m\sup_{s\in[t_1,t_2]}\big[\tilde{B}_1(\xi,s)+\tilde{B}_2(\xi,s)+\tilde{B}_3(\xi,s)+\tilde{B}_4(\xi,s)\big],
	\end{aligned}
	\end{equation}
	where 
	\begin{equation}\label{29}
	\begin{aligned}
	\tilde{B}_1(\xi,s)&= \bigg|\xi\int_{\mathbb{R}^2}m_5(\eta,\sigma)e^{\mathrm{i}s\tilde{\Phi}(\xi,\eta,\sigma)}\partial_s\widehat{f_{k_1}^+}(\xi+\eta,s)
	\widehat{f_{k_2}^+}(\xi+\sigma,s)\\
	&\qquad\qquad\qquad\qquad\qquad\qquad\times\widehat{f_{k_3}^-}
	(-\xi-\eta-\sigma,s)\,
	d \eta d \sigma\bigg|,\\
	\tilde{B}_2(\xi,s)&= \bigg|\xi\int_{\mathbb{R}^2}m_5(\eta,\sigma)e^{\mathrm{i}s\tilde{\Phi}(\xi,\eta,\sigma)}\widehat{f_{k_1}^+}(\xi+\eta,s)
	\partial_s\widehat{f_{k_2}^+}(\xi+\sigma,s)\\
	&\qquad\qquad\qquad\qquad\qquad\qquad\times\widehat{f_{k_3}^-}
	(-\xi-\eta-\sigma,s)\, d \eta d \sigma\bigg|,\\
	\tilde{B}_3(\xi,s)&= \bigg|\xi\int_{\mathbb{R}^2}m_5(\eta,\sigma)e^{\mathrm{i}s\tilde{\Phi}(\xi,\eta,\sigma)}\widehat{f_{k_1}
		^+}(\xi+\eta,s)
	\widehat{f_{k_2}^+}(\xi+\sigma,s)\\
	&\qquad\qquad\qquad\qquad\qquad\qquad\times\partial_s\widehat{f_{k_3}^-}
	(-\xi-\eta-\sigma,s)\, d \eta d \sigma\bigg|,\\
	\tilde{B}_4(\xi,s)&=\bigg|\xi\int_{\mathbb{R}^2}m_6(\eta,\sigma)e^{\mathrm{i}s\tilde{\Phi}(\xi,\eta,\sigma)}\widehat{f_{k_1}^+}(\xi+\eta,s)
	\widehat{f_{k_2}^+}(\xi+\sigma,s)\\
	&\qquad\qquad\qquad\qquad\qquad\qquad\times\widehat{f_{k_3}^-}
	(-\xi-\eta-\sigma,s)\,
	d \eta d \sigma\bigg|,
	\end{aligned}
	\end{equation}
	in which 
	\begin{equation}\label{30}
	\begin{aligned}
	m_5(\eta,\sigma):=\frac{1}{\tilde{\Phi}(\xi,\eta,\sigma)}\varphi_{k_1}^\prime(\xi-\eta-\sigma)\varphi_{k_2}^\prime(\eta)\varphi_{k_3}^\prime(\sigma).
	\end{aligned}
	\end{equation}
	and
	\begin{equation}\label{30.1}
	\begin{aligned}
	m_6(\eta,\sigma):=\partial_s H(\xi,s)m_5(\eta,\sigma).
	\end{aligned}
	\end{equation}
	
		It thus follows from \eqref{lower bound-4}-\eqref{lower bound-5} that
		\begin{equation*}
		\begin{aligned}
		\|\mathcal{F}^{-1}(m_5)\|_{L^1}\lesssim \min(2^{30(\alpha+1)p_0m},2^{(\alpha+1)m/100}).
		\end{aligned}
		\end{equation*}
		We therefore, in light of \eqref{ap2}, may estimate
		\begin{equation*}
		\begin{aligned}
		\tilde{A}_j(\xi,t_j)
		&\lesssim \epsilon_1^32^k\min(2^{30(\alpha+1)p_0m},2^{(\alpha+1)m/100})2^{(2p_0-1/2)m}2^{-N_0({k_1}_+,{k_2}_+,{k_3}_+)}\\
		&\lesssim \epsilon_1^32^{-m/4}2^{-10k_+},\quad \text{for}\ j=1,2,
		\end{aligned}
		\end{equation*}

	Recalling the definition \eqref{def:L}, one estimates 	
	\begin{equation*}
	\begin{aligned}
	|\partial_s L(\xi,s)|
	\lesssim \epsilon_1^22^{(2-\alpha)k}2^{-m}2^{2k_{\pm}},
	\end{aligned}
	\end{equation*}
	and then obtains
	\begin{equation*}
	\begin{aligned}
	\|\mathcal{F}^{-1}(m_6)\|_{L^1}\lesssim \epsilon_1^22^{(2-\alpha)k}\min(2^{30(\alpha+1)p_0m},2^{(\alpha+1)m/100})2^{-m}2^{2k_{\pm}}.
	\end{aligned}
	\end{equation*}
	Applying \eqref{ap2}, we may bound
	\begin{equation*}
	\begin{aligned}
	\sup_{s\in[t_1,t_2]}\tilde{B}_4(\xi,s)
	&\lesssim \epsilon_1^52^{(3-\alpha)k}\min(2^{30(\alpha+1)p_0m},2^{(\alpha+1)m/100})2^{(2p_0-3/2)m}\\
	&\quad\times 2^{-N_0({k_1}_+,{k_2}_+,{k_3}_+)}
	2^{2k_{\pm}}\\
	&\lesssim \epsilon_1^52^{-m}2^{-m/4}2^{-10k_+}.
	\end{aligned}
	\end{equation*}

	One first notices from \eqref{8} and \eqref{13} that  
	\begin{align*}
	\|\partial_s\widehat{f_l}(s)\|_{L^2}\lesssim \epsilon_1^32^{-m}2^{3p_0m}2^{-10l_+}.
	\end{align*}
	Let
	\begin{equation*}
	\begin{aligned}
	&\widehat{g_1}(\theta):=e^{-\mathrm{i}s|\xi-\theta|^\alpha(\xi-\theta)}\partial_s\widehat{f_{k_1}^+}(\xi-\theta,s),\\
	&\widehat{g_2}(\eta):=e^{-\mathrm{i}s|\eta|^\alpha\eta}\widehat{f_{k_2}^+}(\eta,s),\\
	&\widehat{g_3}(\sigma):=e^{\mathrm{i}s|\sigma|^\alpha\sigma}\widehat{f_{k_3}^-}(\sigma,s),
	\end{aligned}
	\end{equation*}
	we then use \eqref{13} to get
	\begin{equation*}
	\begin{aligned}
	\|g_1\|_{L^2}\lesssim \epsilon_12^{3p_0m}2^{-10{k_1}_{+}}2^{-m},\ \|g_2\|_{L^2}
	\lesssim \epsilon_12^{-N_0k_+}2^{p_0m},\
	\|g_3\|_{L^\infty}\lesssim \epsilon_12^{-m/2}.
	\end{aligned}
	\end{equation*}
	It thus follows from \eqref{ap2} that	
	\begin{equation*}
	\begin{aligned}
	\sup_{s\in[t_1,t_2]}\tilde{B}_1(\xi,s)
	&\lesssim \epsilon_1^32^k\min(2^{30(\alpha+1)p_0m},2^{(\alpha+1)m/100})2^{(4p_0-3/2)m}\\
	&\quad\times 2^{-10{k_1}_{+}}2^{-N_0({k_2}_+,{k_3}_+)}\\
	&\lesssim \epsilon_1^32^{-m}2^{-m/4}2^{-10k_+},
	\end{aligned}
	\end{equation*}
	Similarly one has
	\begin{equation*}
	\begin{aligned}
	\sup_{s\in[t_1,t_2]}\big[\tilde{B}_2(\xi,s)+\tilde{B}_3(\xi,s)\big]
	\lesssim \epsilon_1^32^{-m}2^{-m/4}2^{-10k_+}.
	\end{aligned}
	\end{equation*}

\end{proof}

\bigskip
\noindent{\bf{\emph{Resonant regime}: \(k_1,k_2,k_3\in[k-20,k+20]\).}} 

\begin{proof}[Proof of \eqref{15}]  It suffices to show
	\begin{equation*}
	\begin{aligned}
	&\bigg|J_{k_1,k_2,k_3}(\xi,s)-\tilde{\tilde{c}}s^{-1}|\xi|^{1-\alpha}\widehat{f_{k_1}}(\xi,s)\widehat{f_{k_2}}(\xi,s)\widehat{f_{k_3}}(-\xi,s)\bigg|\\
	&\lesssim \varepsilon_1^32^{-m}2^{-3p_0m}2^{-10k_+}.
	\end{aligned}
	\end{equation*}
	
	Let \(\bar{l}\) be the smallest integer with the property that 
	\begin{align*}
	2^{\bar{l}}\geq 2^{(1-\alpha)k/2}2^{-49m/100}.
	\end{align*}
We may decompose 
	\begin{align*}
	J_{k_1,k_2,k_3}(\xi,s)=\sum_{l_1,l_2=\bar{l}}^{k+20}J_{l_1,l_2}(\xi,s),
	\end{align*}
	with
	\begin{equation*}
	\begin{aligned}
	J_{l_1,l_2}(\xi,s)&=\int_{\mathbb{R}^2}e^{\mathrm{i}s\tilde{\Phi}(\xi,\eta,\sigma)}\widehat{f_{k_1}^+}(\xi+\eta,s)\widehat{f_{k_2}^+}(\xi+\sigma,s)\\
	&\quad\times\widehat{f_{k_3}^-}
	(-\xi-\eta-\sigma,s)\varphi_{l_1}^{(\bar{l})}(\eta)\varphi_{l_2}^{(\bar{l})}(\sigma)\,d \eta d \sigma,
	\end{aligned}
	\end{equation*}
	for any \(l_1,l_2\geq \bar{l}\), and where 
	\begin{equation*}
	\begin{aligned}
	\varphi_{k}^{(l)}(x):=\varphi(x/2^k),\  \text{if}\  k=l,
	\end{aligned}
	\end{equation*}
	and
	\begin{equation*}
	\begin{aligned}
	\varphi_{k}^{(l)}(x):=\varphi(x/2^k)-\varphi(x/2^{k-1}),\  \text{if}\  k\geq l+1.
	\end{aligned}
	\end{equation*}
	
	\bigskip
	\noindent{\bf{Case 1: \(l_2\geq \max(l_1,\bar{l}+1)\) or \(l_1\geq \max(l_2,\bar{l}+1)\).}} We only consider the case \(l_2\geq \max(l_1,\bar{l}+1)\), a similar argument  applies to the other case. 
	In this case, we only need to show
	\begin{align*}
	|J_{l_1,l_2}(\xi,s)|\lesssim \epsilon_1^32^{-m}2^{-3p_0m}2^{-10k_{+}}.
	\end{align*}
	
	On the support of the integral, one sees that
	\begin{equation}\label{34}
	\begin{aligned}
	|\partial_\eta\tilde{\Phi}(\xi,\eta,\sigma)|
	&=(\alpha+1)\big||\xi+\eta+\sigma|^{\alpha-1}(\xi+\eta+\sigma)-
	|\xi+\eta|^{\alpha-1}(\xi+\eta)\big|\\
	&\gtrsim 2^{l_2}2^{(\alpha-1)k}.
	\end{aligned}
	\end{equation}
Then we use integration by parts in \(\eta\) to obtain
	\begin{align*}
	|J_{l_1,l_2}(\xi,s)|\leq |G_4(\xi,s)|+|G_5(\xi,s)|+|G_6(\xi,s)|,
	\end{align*}
	where 
	\begin{equation*}
	\begin{aligned}
	G_4(\xi,s)&=\int_{\mathbb{R}^2}m_7(\eta,\sigma)e^{\mathrm{i}s\tilde{\Phi}(\xi,\eta,\sigma)}\widehat{f_{k_1}^+}(\xi+\eta,s)\widehat{f_{k_2}^+}(\xi+\sigma,s)\\
	&\qquad\qquad\qquad\qquad\qquad\times\partial_\eta\widehat{f_{k_3}^-}
	(-\xi-\eta-\sigma,s) \, d \eta d \sigma,\\
	G_5(\xi,s)&=\int_{\mathbb{R}^2}m_7(\eta,\sigma)e^{\mathrm{i}s\tilde{\Phi}(\xi,\eta,\sigma)}\widehat{f_{k_1}^+}(\xi+\eta,s)\widehat{f_{k_2}^+}(\xi+\sigma,s)\\
	&\qquad\qquad\qquad\qquad\qquad\times\widehat{f_{k_3}^-}
	(-\xi-\eta-\sigma,t)\, d \eta d \sigma,\\
	G_6(\xi,s)&=\int_{\mathbb{R}^2}\partial_\eta m_7(\eta,\sigma)e^{\mathrm{i}s\tilde{\Phi}(\xi,\eta,\sigma)}\partial_\eta\widehat{f_{k_1}^+}(\xi+\eta,s)\widehat{f_{k_2}^+}(\xi+\sigma,s)\\
	&\qquad\qquad\qquad\qquad\qquad\times\widehat{f_{k_3}^-}
	(-\xi-\eta-\sigma,s)\, d \eta d \sigma,
	\end{aligned}
	\end{equation*}
	with
	\begin{align}\label{35}
	m_7(\eta,\sigma):=\frac{\varphi_{l_1}^{(\bar{l})}(\eta)\varphi_{l_2}^{(\bar{l})}(\sigma)}{s\partial_\eta\tilde{\Phi}(\xi,\eta,\sigma)}
	\varphi_{k_1}^\prime(\xi+\eta)\varphi_{k_2}^\prime(\xi+\sigma)
	\varphi_{k_3}^\prime(\xi+\eta+\sigma).
	\end{align}

	Following \eqref{34} and \eqref{35}, a straightforward calculation shows that 
	\begin{equation*}
	\begin{aligned}
	|\partial_\eta^a\partial_\sigma^bm_7(\eta,\sigma)|&\lesssim (2^{-m}2^{-l_2}2^{(1-\alpha)k})(2^{-al_1}2^{-bl_2})\\
	&\quad \times{\bf{1}}_{[0,2^{l_1+4}]}(|\xi-\eta|){\bf{1}}_{[2^{l_2-4},2^{l_2+4}]}(|\xi-\sigma|),
	\end{aligned}
	\end{equation*}
	for \(a,b\in[0,20]\cap\mathbb{Z}\). Hence one has
	\begin{equation*}
	\begin{aligned}
	\|\mathcal{F}^{-1}(m_7)\|_{L^1}\lesssim 2^{-m}2^{-l_2}2^{(1-\alpha)k}.
	\end{aligned}
	\end{equation*}
	
	We first estimate the term \(G_4(\xi,s)\). 
	Fix \(\xi\) and \(s\), and let 
	\begin{equation*}
	\begin{aligned}
	&\widehat{g_1}(\eta):=e^{-\mathrm{i}s|\xi+\eta|^{(\alpha+1)}}\partial_\eta\widehat{f_{k_1}^+}(\xi+\eta,s),\\
	&\widehat{g_2}(\sigma):=e^{-\mathrm{i}s|\xi+\sigma|^{(\alpha+1)}}\widehat{f_{k_2}^+}(\xi+\sigma,s)\varphi_{l_2}(\sigma/2^{l_2+4}),\\
	&\widehat{g_3}(\theta):=e^{\mathrm{i}s|\xi-\sigma|^{(\alpha+1)}}\widehat{f_{k_3}^-}(-\xi+\theta,s)\varphi_{l_2}(\theta/2^{l_2+4}).
	\end{aligned}
	\end{equation*}
we then use \eqref{13} to get
	\begin{equation*}
	\begin{aligned}
	\|g_1\|_{L^2}\lesssim \epsilon_12^{-k}2^{p_0m},\quad
	\|g_2\|_{L^\infty}\lesssim \epsilon_12^{-m/2},\quad
	\|g_3\|_{L^2}\lesssim \epsilon_12^{l_2/2}2^{k_{\pm}}.
	\end{aligned}
	\end{equation*}
	It then follows from \eqref{ap2} that
	\begin{equation*}
	\begin{aligned}
	|G_4(\xi,s)|&\lesssim \|\mathcal{F}^{-1}(m_7)\|_{L^1}\|f\|_{L^2}\|g\|_{L^\infty}\|h\|_{L^2}\\
	&\lesssim \epsilon_1^32^{-3m/2+p_0m+49m/200}2^{(1+\alpha)k/2}2^{k_{\pm}}\\
	&\lesssim \epsilon_1^32^{-m}2^{-m/200}2^{-10k_+},
	\end{aligned}
	\end{equation*}
	which is stronger than what we need. A similar argument yields 
	\begin{align*}
	|G_5(\xi,s)|\lesssim \epsilon_1^32^{-m}2^{-m/200}2^{-10k_+}.
	\end{align*}

	To estimate the term \(G_6(\xi,s)\), we integrate by parts in \(\eta\) again to deduce
	\begin{align*}
	|G_6(\xi,s)|\leq |G_7(\xi,s)|+|G_8(\xi,s)|+|G_9(\xi,s)|,
	\end{align*}
	in which 
	\begin{equation*}
	\begin{aligned}
	G_7(\xi,s)&=\int_{\mathbb{R}^2}m_8(\eta,\sigma)e^{\mathrm{i}s\tilde{\Phi}(\xi,\eta,\sigma)}\widehat{f_{k_1}^+}(\xi+\eta,s)\widehat{f_{k_2}^+}(\xi+\sigma,s)\\
	&\qquad\qquad\qquad\qquad\qquad\times\partial_\eta\widehat{f_{k_3}^-}
	(-\xi-\eta-\sigma,s)\, d \eta d \sigma,\\
	G_8(\xi,s)&=\int_{\mathbb{R}^2}m_8(\eta,\sigma)e^{\mathrm{i}s\tilde{\Phi}(\xi,\eta,\sigma)}\widehat{f_{k_1}^+}(\xi+\eta,s)\widehat{f_{k_2}^+}(\xi+\sigma,s)\\
	&\qquad\qquad\qquad\qquad\qquad\times\widehat{f_{k_3}^-}
	(-\xi-\eta-\sigma,s) \, d \eta d \sigma,\\
	G_9(\xi,s)&=\int_{\mathbb{R}^2}\partial_\eta m_8(\eta,\sigma)e^{\mathrm{i}s\tilde{\Phi}(\xi,\eta,\sigma)}\partial_\eta\widehat{f_{k_1}^+}(\xi+\eta,s)\widehat{f_{k_2}^+}(\xi+\sigma,s)\\
	&\qquad\qquad\qquad\qquad\qquad\times\widehat{f_{k_3}^-}
	(-\xi-\eta-\sigma,s)\, d \eta d \sigma,
	\end{aligned}
	\end{equation*}
	with
	\begin{align}\label{36}
	m_8(\eta,\sigma):=\frac{\partial_\eta m_7(\eta,\sigma)}{s\partial_\eta\tilde{\Phi}(\xi,\eta,\sigma)}.
	\end{align}
	
	It follows from \eqref{34} and \eqref{36} that \(m_8\) satisfies the following stronger estimate 
	\begin{equation*}
	\begin{aligned}
	|\partial_\eta^a\partial_\sigma^bm_8(\eta,\sigma)|&\lesssim (2^{-m}2^{-l_1-l_2}2^{(1-\alpha)k})(2^{-m}2^{-l_2}2^{(1-\alpha)k})(2^{-al_1}2^{-bl_2})\\
	&\quad\times{\bf{1}}_{[0,2^{l_1+4}]}(|\xi-\eta|){\bf{1}}_{[2^{l_2-4},2^{l_2+4}]}(|\xi-\sigma|),
	\end{aligned}
	\end{equation*}
	for \(a,b\in[0,19]\cap\mathbb{Z}\).
	In a similar fashion as \(G_4(\xi,s)\) and \(G_5(\xi,s)\), we use \eqref{ap2} to obtain
	\begin{equation*}
	\begin{aligned}
	|G_7(\xi,s)|+|G_8(\xi,s)|
	\lesssim \epsilon_1^32^{-m}2^{-m/200}2^{-10k_+}.
	\end{aligned}
	\end{equation*}
	We finally estimate the left term \(G_9(\xi,s)\) as follows:
	\begin{equation*}
	\begin{aligned}
	|G_9(\xi,s)|&\lesssim  2^{l_1}2^{l_2}|\partial_\eta m_8|\big\|\widehat{f_{k_1}^+}\big\|_{L^\infty}\big\|\widehat{f_{k_2}^+}\big\|_{L^\infty}\big\|\widehat{f_{k_3}^-}\big\|_{L^\infty}\\
	&\lesssim \epsilon^3(2^{-m}2^{-l_1-l_2}2^{(1-\alpha)k})2^{-m}2^{(1-\alpha)k}2^{3k_{\pm}}\\
	&\lesssim \epsilon^32^{-m}2^{-m/200}2^{-10k_+}.
	\end{aligned}
	\end{equation*}
	
	\bigskip
	\noindent{\bf{Case 2: \(l_1=l_2=\bar{l}\).}} In this case, it suffices to prove that
	\begin{align}\label{37}
	\big|J_{\bar{l},\bar{l}}(\xi,s)-\tilde{\tilde{c}}s^{-1}|\xi|^{1-\alpha}\widehat{f_{k_1}^+}(\xi,s)\widehat{f_{k_2}^+}(\xi,s)\widehat{f_{k_3}^-}(-\xi,s)\big|
	\lesssim \epsilon_1^32^{-m}2^{-2p_0m}2^{-10k_+}.
	\end{align} 
	Defining
	\begin{equation*}
	\begin{aligned}
	\tilde{J}_{\bar{l},\bar{l}}(\xi,s)&=\int_{\mathbb{R}^2}e^{\mathrm{i}s\alpha(\alpha+1)\eta\sigma/|\xi|^{1-\alpha}}\widehat{f_{k_1}^+}(\xi+\eta,s)\widehat{f_{k_2}^+}(\xi+\sigma,s)\\
	&\quad\times\widehat{f_{k_3}^-}
	(-\xi-\eta-\sigma,s)\varphi\big(2^{-\bar{l}}\eta\big)\varphi\big(2^{-\bar{l}}\sigma\big)\, d\eta d \sigma,
	\end{aligned}
	\end{equation*}
	and observing
	\begin{align*}
	\tilde{\Phi}(\xi,\eta,\sigma)=\alpha(\alpha+1)|\xi|^{\alpha-1}\eta\sigma
	+2^{(\alpha-2)k}\mathcal{O}\big[(|\eta|+|\sigma|)^3\big],
	\end{align*} 
	for \(|\xi-\eta|+|\xi-\sigma|\leq 2^{k-5}\), 
	we  estimate
	\begin{equation}\label{38}
	\begin{aligned}
	|J_{\bar{l},\bar{l}}(\xi,s)-\tilde{J}_{\bar{l},\bar{l}}(\xi,s)|
	\lesssim \epsilon^32^m2^{(\alpha-2)k}2^{5\bar{l}}2^{3k_{\pm}}
	\lesssim \epsilon^32^{-m}2^{-2m/5}2^{-10k_+}.
	\end{aligned}
	\end{equation}
	Noticing
	\begin{equation*}
	\begin{aligned}
	\big|\widehat{f_{k}}(\xi+r,s)-\widehat{f_{k}}(\xi,s)\big|\lesssim 2^{\bar{l}/2}2^{-k}2^{p_0m},\quad \text{for}\ |r|\leq 2^{\bar{l}},
	\end{aligned}
	\end{equation*}
	we obtain
	\begin{equation*}
	\begin{aligned}
	&\big|\widehat{f_{k_1}^+}(\xi+\eta,s)\widehat{f_{k_2}^+}(\xi+\sigma,s)\widehat{f_{k_3}^-}
	(-\xi-\eta-\sigma,s)-\widehat{f_{k_1}^+}(\xi,s)\widehat{f_{k_2}^+}(\xi,s)\widehat{f_{k_3}^-}(-\xi,s)\big|\\
	&\lesssim \epsilon^32^{\bar{l}/2}2^{p_0m}2^{-k}2^{3k_{\pm}},\quad \text{for}\ |\xi-\eta|+|\xi-\sigma|\leq 2^{\bar{l}+4}.
	\end{aligned}
	\end{equation*}
	It then follows that
	\begin{equation}\label{39}
	\begin{aligned}
	&\bigg|\tilde{J}_{\bar{l},\bar{l}}(\xi,s)-\int_{\mathbb{R}^2}e^{\mathrm{i}s\alpha(\alpha+1)\eta\sigma/|\xi|^{1-\alpha}}\widehat{f_{k_1}^+}(\xi,s)\widehat{f_{k_2}^+}(\xi,s)\widehat{f_{k_3}^-}(-\xi,s)\\
	&\qquad\qquad\qquad\qquad\qquad\qquad\qquad\times\varphi\big(2^{-\bar{l}}\eta\big)\varphi\big(2^{-\bar{l}}\sigma\big)\, d\eta d \sigma\bigg|\\
	&\lesssim \epsilon^32^{\bar{l}/2}2^{p_0m}2^{2\bar{l}}2^{3k_{\pm}}
	\lesssim \epsilon^32^{-6m/5}2^{-10k_+}.
	\end{aligned}
	\end{equation}

	One calculates  
	\begin{equation*}
	\begin{aligned}
	\int_{\mathbb{R}^2} e^{-\mathrm{i}xy}e^{-x^2/N^2}e^{-y^2/N^2}\, d x d y
	=\frac{2\pi N}{\sqrt{4N^{-2}+N^2}}=2\pi+\mathcal{O}(N^{-1}),
	\end{aligned}
	\end{equation*}
	in which we used the formula 
	\begin{equation*}
	\begin{aligned}
	\int_\mathbb{R} e^{-ax^2-bx}\, d x=e^{b^2/(4a)}\sqrt{\pi/a}, \quad \mathrm{for}\ a,b\in \mathcal{C}, \Re a>0.
	\end{aligned}
	\end{equation*}
	It  follows that
	\begin{equation*}
	\begin{aligned}
	\int_{\mathbb{R}^2} e^{-\mathrm{i}xy}\varphi(x/N)\varphi(y/N)\, d x d y
	=2\pi+\mathcal{O}(N^{-1/2}),\quad \text{for}\ N\geq 1.
	\end{aligned}
	\end{equation*}
	We then  estimate 
	\begin{equation*}
	\begin{aligned}
	&\left|\int_{\mathbb{R}^2}e^{\mathrm{i}s\alpha(\alpha+1)\eta\sigma/|\xi|^{1-\alpha}}\varphi\big(2^{-\bar{l}}\eta\big)\varphi\big(2^{-\bar{l}}\sigma\big)\, d \eta d \sigma-\frac{2\pi|\xi|^{1-\alpha}}{s|\alpha|(\alpha+1)}\right|\\
	&\lesssim 2^{-\alpha k}2^{-m}\big(2^{m/100}\big)^{-1/2}
	\lesssim 2^{-m}2^{-m/300}.
	\end{aligned}
	\end{equation*}
	Hence we get
	\begin{equation}\label{40}
	\begin{aligned}
	&\bigg|\int_{\mathbb{R}^2}e^{\mathrm{i}s\alpha(\alpha+1)\eta\sigma/|\xi|^{1-\alpha}}\widehat{f_{k_1}^+}(\xi,s)\widehat{f_{k_2}^+}(\xi,s)\widehat{f_{k_3}^-}(-\xi,s)\varphi\big(2^{-\bar{l}}\eta\big)\varphi\big(2^{-\bar{l}}\sigma\big)\, d \eta d \sigma\\
	&\qquad\qquad\qquad\qquad\qquad\qquad-\frac{2\pi |\xi|^{1-\alpha}}{s|\alpha|(\alpha+1)}\widehat{f_{k_1}^+}(\xi,s)\widehat{f_{k_2}^+}(\xi,s)\widehat{f_{k_3}^-}(-\xi,s)\bigg|\\
	&\lesssim \epsilon^32^{-m}2^{-m/300}2^{3k_{\pm}}
	\lesssim \epsilon^32^{-m}2^{-m/300}2^{-10k_+}.
	\end{aligned}
	\end{equation}
	
	We finally conclude \eqref{37} from  \eqref{38}-\eqref{40}.
\end{proof}

\section{Appendix}

 We collect here some technical results for which we refer  for instance to \cite[Lemma 2.1, Lemma 2.2, Lemma 2.3]{MR3961297}:
 \begin{lemma}
 	It holds that 
 	\begin{align}\label{ap2}
 	\left|\int_{\mathbb{R}^2}m(\eta,\sigma)\widehat{f}(\eta)\widehat{g}(\sigma)\widehat{h}(-\eta-\sigma)\, d \eta d \sigma\right|\lesssim \|\mathcal{F}^{-1}(m)\|_{L^1}\|f\|_{L^p}\|g\|_{L^q}\|h\|_{L^r},
 	\end{align}
 	for any \((p,q,r)\in\{(2,2,\infty),(2,\infty,2),(\infty,2,2)\}\).
 \end{lemma}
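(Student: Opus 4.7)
The plan is to rewrite the trilinear form in the Fourier variables as a convolution-type expression on the physical side, and then reduce to H\"older's inequality. Setting \(M=\mathcal{F}^{-1}(m)\), so that \(m(\eta,\sigma)=c\int M(y,z)e^{-\mathrm{i}(y\eta+z\sigma)}\, d y d z\) for an explicit constant \(c\), I would first insert this representation of \(m\) into the left-hand side of \eqref{ap2} and exchange the order of integration by Fubini. Using the identity \(e^{-\mathrm{i}y\eta}\widehat{f}(\eta)=\mathcal{F}\bigl(f(\cdot-y)\bigr)(\eta)\) (and analogously for \(g\)), together with the standard Plancherel identity
\[
\int_{\mathbb{R}^2}\widehat{f_1}(\eta)\widehat{f_2}(\sigma)\widehat{f_3}(-\eta-\sigma)\, d\eta d\sigma=c'\int_{\mathbb{R}} f_1(x)f_2(x)f_3(x)\, dx
\]
(which follows by the substitution \(\tau=\eta+\sigma\), recognizing the \(\eta\)-integral as a convolution, and invoking \(\widehat{f_1 f_2}=c''\widehat{f_1}\ast\widehat{f_2}\) and Plancherel), the trilinear form under consideration becomes
\[
c'''\int_{\mathbb{R}^2} M(y,z)\Bigl(\int_{\mathbb{R}} f(x-y)\,g(x-z)\,h(x)\, dx\Bigr) dy dz.
\]

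Next, I would bound the inner \(x\)-integral uniformly in \((y,z)\) by H\"older's inequality, exploiting the translation invariance of \(L^p\) norms. For \((p,q,r)=(2,2,\infty)\) I would pull out \(\|h\|_{L^\infty}\) and apply Cauchy--Schwarz to \(f(\cdot-y)\cdot g(\cdot-z)\); for \((2,\infty,2)\) pull out \(\|g\|_{L^\infty}\) and apply Cauchy--Schwarz to \(f(\cdot-y)\) and \(h\); for \((\infty,2,2)\) proceed symmetrically with \(\|f\|_{L^\infty}\). In each case the inner integral is dominated by \(\|f\|_{L^p}\|g\|_{L^q}\|h\|_{L^r}\), which is a constant with respect to \((y,z)\). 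The outer integration then contributes the factor \(\int|M(y,z)|\, dy dz=\|\mathcal{F}^{-1}(m)\|_{L^1}\), yielding the claimed bound up to the universal constant absorbed in \(\lesssim\).

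I do not foresee any serious obstacle, as this is a textbook multilinear multiplier estimate. The only mild technicality is justifying Fubini when exchanging the physical- and Fourier-side integrations; this is handled by first assuming \(f,g,h\in\mathcal{S}(\mathbb{R})\) and \(\mathcal{F}^{-1}(m)\in L^1(\mathbb{R}^2)\), where every quantity in sight is absolutely integrable, and then extending the inequality to general \(L^p\) data and general \(L^1\) kernels by density once the bound is established.
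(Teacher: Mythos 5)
Your proof is correct. The paper does not prove this lemma; it simply cites it (together with the other two appendix lemmas) to Córdoba--Gómez-Serrano--Ionescu (\cite{MR3961297}), and your argument is precisely the standard one behind such Coifman--Meyer-type multiplier estimates: write $m$ as the Fourier transform of its $L^1$ kernel $M$, use Fubini and the Plancherel-type identity to convert the trilinear Fourier integral into $\int M(y,z)\int f(x-y)g(x-z)h(x)\,dx\,dy\,dz$, bound the inner integral by H\"older using translation invariance of the $L^2$ norms and the $L^\infty$ bound on the remaining factor, and finish with $\|M\|_{L^1}$. The Fubini justification by first working with Schwartz data is appropriate; the only remark worth making is that for the $L^\infty$ slot the final extension should be phrased as dominated convergence (or by noting the applications involve only smooth localized data) rather than density, since $\mathcal{S}$ is not dense in $L^\infty$ -- but this is a cosmetic point and does not affect the validity of the argument.
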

 
 \begin{lemma} It holds that 
 	\begin{align}\label{ap1}
 	\|\widehat{P_kg}\|_{L^\infty}\lesssim \left\|P_kg\right\|_{L^1}\lesssim 2^{-k/2}\|\widehat{P_kg}\|_{L^2}^{1/2}(\|\widehat{P_kg}\|_{L^2}+2^k\|\partial\widehat{P_kg}\|_{L^2})^{1/2}.
 	\end{align}
 \end{lemma}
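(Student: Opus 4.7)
The plan is to prove the two inequalities in turn; both are classical and neither requires any subtle harmonic analysis.

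For the first inequality $\|\widehat{P_kg}\|_{L^\infty}\lesssim \|P_kg\|_{L^1}$, I would simply invoke the endpoint Hausdorff--Young inequality: with the paper's normalization,
\[|\widehat{P_kg}(\xi)|=\Bigl|(2\pi)^{-1/2}\!\int e^{-\mathrm{i}x\xi}P_kg(x)\,\mathrm{d}x\Bigr|\leq (2\pi)^{-1/2}\|P_kg\|_{L^1}.\]
Note that the Fourier localization plays no role here.

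For the second inequality, write $f=P_kg$ and set $A=\|\widehat f\|_{L^2}$, $B=\|\partial\widehat f\|_{L^2}$. The strategy is a physical-space split at a scale $R>0$: applying Cauchy--Schwarz separately on $\{|x|\leq R\}$ and on $\{|x|>R\}$ (in the latter, rewrite $|f|=|xf|/|x|$) yields
\[\|f\|_{L^1}\leq (2R)^{1/2}\|f\|_{L^2}+(2/R)^{1/2}\|xf\|_{L^2}.\]
By Plancherel together with the Fourier identity $\F(xf)=\mathrm{i}\,\partial\widehat f$ one has $\|f\|_{L^2}=A$ and $\|xf\|_{L^2}=B$, so the right-hand side becomes $(2R)^{1/2}A+(2/R)^{1/2}B$. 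Optimizing over $R>0$ (choosing $R=B/A$, with the degenerate cases $A=0$ or $B=0$ handled separately) gives
\[\|f\|_{L^1}\lesssim A^{1/2}B^{1/2}.\]

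Finally, the Fourier localization at scale $2^k$ enters only cosmetically, to recast the bound in the form displayed in the lemma. The elementary inequality $AB\leq 2^{-k}A^2+AB=2^{-k}A(A+2^kB)$ gives
\[A^{1/2}B^{1/2}\leq 2^{-k/2}A^{1/2}(A+2^kB)^{1/2},\]
which combined with the previous display yields exactly the stated bound. I do not anticipate any real obstacle: the whole argument reduces to Cauchy--Schwarz, Plancherel, and one elementary algebraic manipulation, and the scale $2^k$ plays a purely decorative role in the final rewriting.
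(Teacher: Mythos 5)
The paper itself does not prove this lemma; it is listed in the appendix and cited from C\'ordoba--G\'omez-Serrano--Ionescu \cite{MR3961297}. Your proof is correct and is the standard argument one would write out if the reference were unpacked: Hausdorff--Young for the first bound, then the Cauchy--Schwarz/physical-space split at scale $R$ with the Fourier identities $\|f\|_{L^2}=\|\widehat f\|_{L^2}$ and $\|xf\|_{L^2}=\|\partial\widehat f\|_{L^2}$, optimized at $R=B/A$ to yield the uncertainty-type bound $\|f\|_{L^1}\lesssim \|\widehat f\|_{L^2}^{1/2}\|\partial\widehat f\|_{L^2}^{1/2}$. Your final observation is a good one and worth stating plainly: what you actually prove is \emph{stronger} than the displayed inequality, since $AB\le 2^{-k}A^2+AB=2^{-k}A(A+2^kB)$ is an honest (if trivial) weakening, so the frequency localization never enters the estimate itself. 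The specific packaging with the $2^{-k/2}$ prefactor and the combination $\|\widehat{P_kg}\|_{L^2}+2^k\|\partial\widehat{P_kg}\|_{L^2}$ is chosen purely for convenience in the dispersive estimates \eqref{l4}--\eqref{l5} and \eqref{d1}--\eqref{d2}, where exactly this bracket appears, not because the Littlewood--Paley truncation plays any analytic role in the inequality. Your handling of the degenerate case is also fine: for a band-limited $f=P_kg$, $\|\partial\widehat f\|_{L^2}=0$ forces $\widehat f$ to be constant on a compact set, hence $f\equiv 0$, so both sides vanish together.
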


 \begin{lemma} Let \(\alpha\in (-1,1)\setminus\{0\}\).
 	For any \(t\geq 1\),  the following linear dispersive estimates hold:
 	\begin{align}\label{l4}
 	\left\|e^{t|D|^{\alpha}\partial_x}P_kg\right\|_{L^\infty}
 	\lesssim t^{-\frac{1}{2}}2^{\frac{1-\alpha}{2}k}\|\widehat{g}\|_{L^\infty}
 	+t^{-\frac{3}{4}}2^{-\frac{1+3\alpha}{4}k}(\|\widehat{g}\|_{L^2}+2^k\|\partial\widehat{g}\|_{L^2}),
 	\end{align}
 	and
 	\begin{align}\label{l5}
 	\left\|e^{t|D|^{\alpha}\partial_x}P_kg\right\|_{L^\infty}
 	\lesssim t^{-\frac{1}{2}}2^{\frac{1-\alpha}{2}k}\|g\|_{L^1}.
 	\end{align}
 	
 \end{lemma}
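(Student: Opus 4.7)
Both estimates stem from stationary-phase analysis of the phase $\Phi(\xi) = x\xi + t|\xi|^{\alpha}\xi$ that appears when one writes
\[
e^{t|D|^{\alpha}\partial_x}P_k g(x) = (2\pi)^{-1/2}\int_{\mathbb{R}}e^{\mathrm{i}\Phi(\xi)}\psi_k(\xi)\widehat g(\xi)\, d\xi.
\]
Its second derivative $\Phi''(\xi) = \alpha(\alpha+1)t|\xi|^{\alpha-1}$ satisfies $|\Phi''(\xi)|\sim t\, 2^{(\alpha-1)k}$ uniformly on $\supp(\psi_k)$, which sets the oscillation scale throughout the argument.

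I would first dispatch \eqref{l5} by writing the propagator as convolution with the kernel $K_{t,k}(x) = (2\pi)^{-1}\int e^{\mathrm{i}\Phi(\xi)}\psi_k(\xi)\, d\xi$ and invoking Young's inequality, so that the claim reduces to $\|K_{t,k}\|_{L^{\infty}}\lesssim t^{-1/2}2^{(1-\alpha)k/2}$. Since $\psi_k$ is a smooth bump with $\|\psi_k\|_{L^{\infty}}+\|\psi_k'\|_{L^1}\lesssim 1$ and $|\Phi''|\gtrsim t\, 2^{(\alpha-1)k}$ on $\supp(\psi_k)$, the second-order van der Corput lemma delivers this bound directly.

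For \eqref{l4} the plan is to isolate the stationary-point contribution. The critical-point equation $x + t(\alpha+1)|\xi|^{\alpha}\mathrm{sgn}(\xi)=0$ determines at most one $\xi_{*}=\xi_*(x,t)$; if $\xi_{*}\notin\supp(\psi_k)$ then one integration by parts in $\xi$ already closes the estimate with $\|\widehat g\|_{L^2}+2^k\|\partial\widehat g\|_{L^2}$ on the right, so I concentrate on the case $\xi_{*}\in\supp(\psi_k)$. Decomposing $\widehat g(\xi) = \widehat g(\xi_{*}) + [\widehat g(\xi) - \widehat g(\xi_{*})]$, the main piece $\widehat g(\xi_{*})\int e^{\mathrm{i}\Phi}\psi_k\, d\xi = \sqrt{2\pi}\,\widehat g(\xi_{*})K_{t,k}(x)$ is controlled by the kernel bound of the previous step, producing $t^{-1/2}2^{(1-\alpha)k/2}\|\widehat g\|_{L^{\infty}}$, which is precisely the first term in \eqref{l4}. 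For the remainder I would introduce a smooth cutoff $\chi_r$ supported in $|\xi-\xi_{*}|\le 2r$. On the inner region the pointwise bound $|\widehat g(\xi)-\widehat g(\xi_{*})|\le |\xi-\xi_{*}|^{1/2}\|\partial\widehat g\|_{L^2}$ yields a contribution $\lesssim r^{3/2}\|\partial\widehat g\|_{L^2}$. On the outer region the lower bound $|\Phi'(\xi)|\gtrsim t\,2^{(\alpha-1)k}|\xi-\xi_{*}|$ supports one integration by parts; derivatives falling on $\widehat g$ or on $\chi_r$ give terms of size $\|\partial\widehat g\|_{L^2}/(t\,2^{(\alpha-1)k}r^{1/2})$, while derivatives falling on $\psi_k$ (which contribute the extra $2^{-k}$ from $|\psi_k'|\lesssim 2^{-k}$) are handled via Cauchy--Schwarz against $|\xi-\xi_{*}|^{-1}$ and supply the $\|\widehat g\|_{L^2}$ contribution. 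The optimal choice $r = t^{-1/2}2^{(1-\alpha)k/2}$ balances the inner and outer estimates and, using $r^{3/2} = t^{-3/4}2^{-(1+3\alpha)k/4}\cdot 2^k$, produces exactly the coefficient stated in \eqref{l4}.

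The main obstacle is the careful bookkeeping on the outer region: three distinct terms appear from the product rule (derivatives on $\psi_k$, on $\chi_r$, and on $\widehat g$), each carrying its own $L^2$ or $L^{\infty}$ factor on $\widehat g$, and one must verify that the sum consolidates into the clean coefficient $t^{-3/4}2^{-(1+3\alpha)k/4}(\|\widehat g\|_{L^2}+2^k\|\partial\widehat g\|_{L^2})$ uniformly in $x$. The configuration to watch is when $\xi_{*}$ sits near the boundary of $\supp(\psi_k)$, so that the linearized lower bound $|\Phi'|\gtrsim t\,2^{(\alpha-1)k}|\xi-\xi_{*}|$ degenerates; handling this corner uniformly, together with checking that the no-stationary-point case does not dominate, is the only technically delicate step.
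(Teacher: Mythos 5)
Your proposal is essentially sound and reaches the correct exponents, but the route is not identical to the one the paper (following \cite[Lemma 2.3]{MR3961297}) takes in the Appendix. For \eqref{l5} you argue by Young's inequality against the convolution kernel and a second-order van der Corput bound, whereas the paper treats \eqref{l5} (resp.\ \eqref{d2}) as a consequence of \eqref{l4} (resp.\ \eqref{d1}); your self-contained derivation is arguably cleaner since \eqref{l4} does not formally imply \eqref{l5}. For \eqref{l4} you freeze the profile at the critical point, writing $\widehat g(\xi) = \widehat g(\xi_*) + [\widehat g(\xi)-\widehat g(\xi_*)]$, feed the constant piece back into the kernel bound, and then split the remainder into inner/outer zones of radius $r\sim t^{-1/2}2^{(1-\alpha)k/2}$, controlling the inner piece by a H\"older-$1/2$ bound from Cauchy--Schwarz and the outer piece by one integration by parts. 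The paper instead leaves $\widehat g$ untouched, decomposes the domain into a central ball around the stationary point $\xi_0$ (of the same radius) plus dyadic annuli, estimates the central piece by the trivial $L^1_\xi\times L^\infty_\xi$ bound, and performs integration by parts on each annulus. The two decompositions are genuinely different (you split the integrand, the paper splits the domain), though both are instances of stationary phase; your version of the main term is sharper, while the paper's is the more standard annular bookkeeping.

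Two concrete points to tighten. First, when you integrate by parts on the outer region the product rule produces not three but four terms: besides derivatives falling on $\widehat g$, on $\chi_r$ and on $\psi_k$, there is the term where $\partial_\xi$ hits $1/\Phi'$, giving $\Phi''/(\Phi')^2\sim\bigl(t2^{(\alpha-1)k}|\xi-\xi_*|^2\bigr)^{-1}$. This term appears explicitly in the paper's proof; in your scheme, paired with $|\widehat g(\xi)-\widehat g(\xi_*)|\lesssim|\xi-\xi_*|^{1/2}\|\partial\widehat g\|_{L^2}$, it yields the same $\bigl(t2^{(\alpha-1)k}\bigr)^{-1}r^{-1/2}\|\partial\widehat g\|_{L^2}$ bound as the other terms, so there is no change in the outcome, but it must be included. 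Second, for the term where the derivative falls on $\psi_k$ one cannot simply treat $\widehat g(\xi)-\widehat g(\xi_*)$ via Cauchy--Schwarz against $|\xi-\xi_*|^{-1}$: the subtracted constant $\widehat g(\xi_*)$ does not belong to $L^2$ and, paired with $|\xi-\xi_*|^{-1}$, produces a logarithm $\log(2^k/r)$ times $2^{-k}(t2^{(\alpha-1)k})^{-1}\|\widehat g\|_{L^\infty}$. This is still absorbable into the two stated terms, but needs to be argued, and it is exactly the corner you flag at the end. Finally, as a small bookkeeping remark: for $\Phi(\xi)=x\xi+t|\xi|^\alpha\xi$ one has $\Phi'(\xi)=x+t(\alpha+1)|\xi|^\alpha$ (no $\mathrm{sgn}$) and $\Phi''(\xi)=t\alpha(\alpha+1)|\xi|^{\alpha-1}\mathrm{sgn}(\xi)$; your formulas have the $\mathrm{sgn}$ factors misplaced, though only $|\Phi''|\sim t2^{(\alpha-1)k}$ enters the argument.
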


To study the decay of solutions to \eqref{eq:main-2}, we need the following dispersive linear estimate on the fNLS semi-group:
\begin{lemma} Let \(\alpha\in (-1,1)\setminus\{0\}\).
	For any \(t\geq 1\),  the following linear dispersive estimates hold:
	\begin{align}\label{d1}
	\left\|e^{-\mathrm{i}t|D|^{\alpha+1}}P_kg\right\|_{L^\infty}
	\lesssim t^{-\frac{1}{2}}2^{\frac{1-\alpha}{2}k}\|\widehat{g}\|_{L^\infty}
	+t^{-\frac{3}{4}}2^{-\frac{1+3\alpha}{4}k}(\|\widehat{g}\|_{L^2}+2^k\|\partial\widehat{g}\|_{L^2}),
	\end{align}
	and
	\begin{align}\label{d2}
	\left\|e^{-\mathrm{i}t|D|^{\alpha+1}}P_kg\right\|_{L^\infty}
	\lesssim t^{-\frac{1}{2}}2^{\frac{1-\alpha}{2}k}\|g\|_{L^1}.
	\end{align}
	
\end{lemma}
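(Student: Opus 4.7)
The proof will follow the stationary phase / oscillatory integral template used for the mfKdV semi-group estimates \eqref{l4}--\eqref{l5}, since the phase $|\xi|^{\alpha+1}$ here and the phase $|\xi|^{\alpha}\xi$ there have second derivatives of the same size $|\xi|^{\alpha-1}$ on the support of $\psi_k$. Concretely, I would start by writing
\begin{equation*}
(e^{-\mathrm{i}t|D|^{\alpha+1}}P_kg)(x)=\frac{1}{\sqrt{2\pi}}\int_{\R} e^{\mathrm{i}\Phi_t(x,\xi)}\,\psi_k(\xi)\widehat{g}(\xi)\,d\xi,\qquad \Phi_t(x,\xi):=x\xi-t|\xi|^{\alpha+1},
\end{equation*}
and compute $\partial_\xi\Phi_t=x-(\alpha+1)|\xi|^{\alpha}\operatorname{sgn}\xi$, $\partial_\xi^2\Phi_t=-\alpha(\alpha+1)|\xi|^{\alpha-1}\operatorname{sgn}\xi$, so that on $|\xi|\sim 2^k$ one has $|\partial_\xi^2\Phi_t|\sim t\,2^{(\alpha-1)k}$. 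The (unique for each sign of $\xi$) stationary point $\xi_0$ is determined by $|\xi_0|^{\alpha}=|x|/[t(\alpha+1)]$, and agrees in magnitude with $2^k$ exactly when $|x|\sim t\,2^{\alpha k}$.

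To prove \eqref{d2}, I would show the kernel bound $\|K_k(\cdot,t)\|_{L^\infty}\lesssim t^{-1/2}2^{(1-\alpha)k/2}$ by a standard van der Corput / stationary-phase argument applied to $\int e^{\mathrm{i}\Phi_t(x,\xi)}\psi_k(\xi)\,d\xi$: where $x$ is comparable to $t(\alpha+1)|\xi_0|^{\alpha}$ use the second-derivative lower bound $|\partial_\xi^2\Phi_t|\gtrsim t\,2^{(\alpha-1)k}$ to gain a factor $(t\,2^{(\alpha-1)k})^{-1/2}=t^{-1/2}2^{(1-\alpha)k/2}$; where no stationary point lies in the support, repeated integration by parts using $(\partial_\xi\Phi_t)^{-1}\partial_\xi$ produces arbitrary decay in $x$ and still gives the $t^{-1/2}2^{(1-\alpha)k/2}$ bound upon taking $L^\infty_x$. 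Convolving with $g$ then yields \eqref{d2}.

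For \eqref{d1}, I would localize $\xi$ near and away from the stationary point $\xi_0(x,t)$ by a dyadic partition of $|\xi-\xi_0|$. On the \emph{near} piece of size $\sim(t\,2^{(\alpha-1)k})^{-1/2}$, one applies the $L^\infty$ bound on $\widehat{g}$ and picks up a factor $(t\,2^{(\alpha-1)k})^{-1/2}=t^{-1/2}2^{(1-\alpha)k/2}$, producing the first term $t^{-1/2}2^{(1-\alpha)k/2}\|\widehat g\|_{L^\infty}$. On each \emph{far} dyadic piece one integrates by parts once in $\xi$ using $(\partial_\xi\Phi_t)^{-1}$ (whose $L^1$-norm of the inverse Fourier transform on that piece is controlled by the distance to $\xi_0$), and then estimates the resulting integral by Cauchy--Schwarz, putting the $L^2$ mass on $\widehat g$ and $\partial\widehat g$. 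Summing the dyadic shells and optimizing the cut-off exactly as in the mfKdV case gives the second term $t^{-3/4}2^{-(1+3\alpha)k/4}(\|\widehat g\|_{L^2}+2^k\|\partial\widehat g\|_{L^2})$.

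The main delicate point is the uniform control of the integration-by-parts remainder in the regime $\alpha<0$ (where $|\xi|^{\alpha-1}$ is large on low frequencies and small on high frequencies) and the book-keeping needed to absorb the factor $2^k\|\partial\widehat g\|_{L^2}$ arising from the derivative falling on $\widehat g$ in the far region; everything else is mechanical once the phase analysis of $\Phi_t$ is set up. Since the second-derivative bound $|\partial_\xi^2\Phi_t|\sim t\,2^{(\alpha-1)k}$ matches exactly the one for the mfKdV phase, the argument is essentially a verbatim repetition of the proof of \eqref{l4}--\eqref{l5}, with $x-t(\alpha+1)|\xi|^{\alpha}\operatorname{sgn}\xi$ playing the role of the stationary-phase variable.
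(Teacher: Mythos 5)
Your proposal follows essentially the same route as the paper: write the oscillatory integral $\int e^{\mathrm{i}(x\xi-t|\xi|^{\alpha+1})}\widehat{P_kg}(\xi)\,d\xi$, locate the stationary point $\xi_0$ from $x=t(\alpha+1)|\xi_0|^{\alpha-1}\xi_0$ and the Hessian scale $t\,2^{(\alpha-1)k}$, bound the near-stationary shell of width $(t\,2^{(\alpha-1)k})^{-1/2}$ via $\|\widehat g\|_{L^\infty}$, and estimate the far dyadic shells by integrating by parts and applying Cauchy--Schwarz to produce the $L^2$ terms, with \eqref{d2} then following from the same kernel analysis. The paper's writeup differs only in bookkeeping (it separates off the trivial small-$t$/nonstationary cases explicitly and derives \eqref{d2} from \eqref{d1} rather than from a direct van der Corput kernel bound), and it likewise observes the argument is a minor variant of the mfKdV estimates \eqref{l4}--\eqref{l5} you invoke.
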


\begin{proof} The proof is similar to that of \cite[Lemma 2.3]{MR3961297}, but we include it  for the sake of completeness. The estimate \eqref{d2} is an easy consequence of \eqref{d1}, so we only prove \eqref{d1}. We write
	\begin{align*}
	e^{-\mathrm{i}t|D|^{\alpha+1}}P_k\phi(x)=\frac{1}{\sqrt{2\pi}}\int_\mathbb{R} e^{\mathrm{i}(x\xi-t|\xi|^{\alpha+1})}\widehat{P_k\phi}(\xi)\, d \xi.
	\end{align*}
	Observing  
	\begin{align*}
	\left|\int_\mathbb{R} e^{\mathrm{i}(x\xi-t|\xi|^{\alpha+1})}\widehat{P_k\phi}(\xi)\, d \xi\right|\lesssim 2^k\|\widehat{P_k\phi}\|_{L^\infty},
	\end{align*}
	we have for \(t\lesssim 2^{-k\alpha}\) that
	\begin{align*}
	\|e^{-\mathrm{i}t|D|^{\alpha+1}}P_k\phi\|_{L^\infty}\lesssim |t|^{-1}2^{k(1/2-\alpha)}\|\widehat{P_k\phi}\|_{L^\infty}.
	\end{align*}
	Therefore we only need to consider \(t\gtrsim 2^{-k\alpha}\) in the following.
	
	Let
	\begin{align*}
	\mathcal{I}:=\{k\in \mathbb{Z}:\ (\alpha+1)^{-1}2^{\alpha-2} |tx^{-1}|\leq 2^{k\alpha}\leq (\alpha+1)^{-1}2^{2-\alpha}|tx^{-1}|\}.
	\end{align*} 
	
	\noindent{\bf{Case 1: \(k\in \mathbb{Z}\setminus\mathcal{I}\).}} 
	Observing 
	\begin{align*}
	|x-t(\alpha+1)|\xi|^{\alpha-1}\xi|\gtrsim t2^{-k(1-\alpha)},
	\end{align*}
	we use integration by parts to deduce
	\begin{equation*}
	\begin{aligned}
	&\left|\int_\mathbb{R} e^{\mathrm{i}(x\xi-t|\xi|^{\alpha+1})}\widehat{P_k\phi}(\xi)\, d \xi\right|\\
	&\lesssim \int_\mathbb{R} \frac{|\partial\widehat{P_k\phi}(\xi)|}{|x-t(\alpha+1)|\xi|^{\alpha-1}\xi|}\, d \xi+\int_\mathbb{R} \frac{|t\alpha(\alpha+1)|\xi|^{\alpha-1}||\widehat{P_k\phi}(\xi)|}{|x-t(\alpha+1)|\xi|^{\alpha-1}\xi|^{\alpha}|^2}\, d \xi\\
	&\lesssim t^{-1}2^{k(1/2-\alpha)}(2^k\|\partial\widehat{P_k\phi}\|_{L^2}+\|\widehat{P_k\phi}\|_{L^2})\\
	&\lesssim t^{-3/4}2^{-k(3\alpha/4-1/2)}(2^k\|\partial\widehat{P_k\phi}\|_{L^2}+\|\widehat{P_k\phi}\|_{L^2}),
	\end{aligned}
	\end{equation*}
	where we have used \(t\gtrsim 2^{-k\alpha}\) in the last inequality.\\	
	
	\noindent{\bf{Case 2: \(k\in \mathcal{I}\).}} It is easy to see that there is a unique \(\xi_0\in\mathbb{R}\) satisfying \(x-t(\alpha+1)|\xi|^{\alpha-1}\xi=0\) and \(|\xi_0|\approx 2^k\). Let \(l_0\) be the smallest integer satisfying  \(2^{l_0}\geq |t|^{-1/2}2^{k(1-\alpha/2)}\).  Then, one has
	\begin{align*}
	\left|\int_{\mathbb{R}} e^{\mathrm{i}(x\xi-t|\xi|^{\alpha+1})}\widehat{P_k\phi}(\xi)\, d \xi\right|
	&\leq \left|\int_{\mathbb{R}} e^{\mathrm{i}(x\xi-t|\xi|^{\alpha+1})}\widehat{P_k\phi}(\xi)\varphi_{l_0}\big(\xi-\xi_0\big)\, d \xi\right|\\
	&\quad+\sum_{l\geq l_0+1}\left|\int_{\mathbb{R}} e^{\mathrm{i}(x\xi-t|\xi|^{\alpha+1})}\widehat{P_k\phi}(\xi)\psi_l(\xi-\xi_0)\, d \xi\right|\\
	&=\colon J_{l_0}+\sum_{l\geq l_0+1}J_l,
	\end{align*}
	where \(l\geq l_0+1\).
	It is easy to see that 
	\begin{align*}
	J_{l_0}
	\leq t^{-1/2}2^{k(1-\alpha/2)}\|\widehat{P_k\phi}\|_{L^\infty}.
	\end{align*}
	It remains to bound \(J_l\) for \(l\geq l_0+1\). For this, notice that
	\begin{align*}
	|x-t(\alpha+1)|\xi|^{\alpha-1}\xi|\gtrsim t2^{l-k(2-\alpha)},
	\end{align*}
	we then integrate by parts to deduce
	\begin{align*}
	J_l
	&\leq \int_{\mathbb{R}} \frac{|\partial\widehat{P_k\phi}(\xi)\psi_l(\xi-\xi_0)|}{|x-t(\alpha+1)|\xi|^{\alpha-1}\xi|}\, d \xi
	+\int_{\mathbb{R}} \frac{|\widehat{P_k\phi}(\xi)\partial\psi_l(\xi-\xi_0)|}{|x-t(\alpha+1)|\xi|^{\alpha-1}\xi|}\, d \xi\\
	&\quad+\int_{\mathbb{R}} \frac{|t\alpha(\alpha+1)|\xi|^{\alpha-1}||\widehat{P_k\phi}(\xi)\psi_l(\xi-\xi_0)|}{|x-t(\alpha+1)|\xi|^{\alpha-1}\xi|^2}\, d \xi\\
	&\lesssim t^{-1}2^{-l/2+k(2-\alpha)}\|\partial\widehat{P_k\phi}\|_{L^2}
	+t^{-1}2^{-l+k(2-\alpha)}\|\widehat{P_k\phi}\|_{L^\infty},
	\end{align*}
	which gives 
	\begin{align*}
	\sum_{l\geq l_0+1}J_l\lesssim t^{-1/2}2^{k(1-\alpha/2)}\|\widehat{P_k\phi}\|_{L^\infty}
	+t^{-3/4}2^{-k(3\alpha/4-1/2)}2^k\|\partial\widehat{P_k\phi}\|_{L^2}.
	\end{align*}

\end{proof}

\section*{Acknowledgments}  The work of both authors was partially  supported by the ANR project ANuI (ANR-17-CE40-0035-02).

\end{document}